\documentclass[11pt]{amsart}
\usepackage{amsmath, amsthm, amssymb, amsfonts}
\usepackage[normalem]{ulem}
\usepackage{hyperref}
\usepackage{verbatim} 
\usepackage{enumitem} 

\usepackage{mathtools}

\DeclarePairedDelimiter\floor{\lfloor}{\rfloor}

\usepackage{tikz-cd}
\usetikzlibrary{arrows}
\usepackage{lmodern}
\usetikzlibrary{decorations.pathmorphing}
\tikzset{snake it/.style={decorate, decoration=snake}}

\theoremstyle{plain}
\newtheorem{thm}{Theorem}
\newtheorem{cor}{Corollary}
\newtheorem{lemma}{Lemma}

\newtheorem{prop}{Proposition}
\newtheorem{conjecture}{Conjecture}  % Conjecture labeled by letters

\theoremstyle{definition}
\newtheorem{defn}{Definition}
\newtheorem{example}{Example}

\theoremstyle{remark}
\newtheorem{rmk}{Remark}

\newcommand{\BC}{{\mathbb{C}}}

\newcommand{\BQ}{{\mathbb{Q}}}
\newcommand{\BR}{{\mathbb{R}}}

\newcommand{\BZ}{{\mathbb{Z}}}

\newcommand{\CB}{{\mathcal B}}
\newcommand{\CC}{{\mathcal C}}
\newcommand{\CD}{{\mathcal D}}

\newcommand{\CK}{{\mathcal K}}
\newcommand{\CL}{{\mathcal L}}
\newcommand{\CM}{{\mathcal M}}

\newcommand{\CO}{{\mathcal O}}

\newcommand{\CU}{{\mathcal U}}

\newcommand{\CX}{{\mathcal X}}

\newcommand{\Fq}{{\mathfrak{q}}}

\newcommand{\blangle}{\big\langle}
\newcommand{\brangle}{\big\rangle}
\newcommand{\Blangle}{\Big\langle}
\newcommand{\Brangle}{\Big\rangle}

\newcommand{\pt}{{\mathsf{p}}}
\newcommand{\ch}{{\mathrm{ch}}}
\newcommand{\td}{{\mathrm{td}}}
\newcommand{\bw}{{\mathbf{w}}}

\renewcommand{\div}{\mathsf{div}}

\DeclareFontFamily{OT1}{rsfs}{}
\DeclareFontShape{OT1}{rsfs}{n}{it}{<-> rsfs10}{}
\DeclareMathAlphabet{\curly}{OT1}{rsfs}{n}{it}

\newcommand\Hom{\operatorname{Hom}}

\newcommand{\p}{\mathbb{P}}

\newcommand{\Mbar}{{\overline M}}

\newcommand{\Pic}{\mathop{\rm Pic}\nolimits}

\newcommand\ev{\operatorname{ev}}

\newcommand{\Hilb}{\mathsf{Hilb}}

\newcommand{\Mod}{\mathsf{Mod}}

\newcommand{\id}{\mathrm{id}}
\newcommand{\wt}{{\mathsf{wt}}}

\newcommand{\SO}{\mathrm{SO}}

\newcommand{\GL}{\mathrm{GL}}

\newcommand{\Mon}{\mathsf{Mon}}

\newcommand{\rank}{\mathrm{rank}}
\newcommand{\NL}{\mathsf{NL}}
\newcommand{\Gr}{\mathsf{Gr}}

\def\cD{{\mathcal{D}}}

\def\cN{{\mathcal{N}}}
\def\cO{{\mathcal{O}}}

\def\cQ{{\mathcal{Q}}}

\def\cU{{\mathcal{U}}}

\def\bP{{\mathbf{P}}}

\def\bw#1{\mathchoice%
 {\textstyle{\bigwedge\mkern-4.5mu^{#1}\mkern1mu}}%
 {\textstyle{\bigwedge\mkern-4.5mu^{#1}\mkern1mu}}%
 {\scriptstyle{\bigwedge\mkern-5mu^{#1}}}%
 {\scriptscriptstyle{\bigwedge\mkern-5mu^{#1}}}%
}
\def\longarrow#1#2{\mathchoice{#2}{#1}{#1}{#1}}
\def\to{\longarrow{\rightarrow}{\longrightarrow}}

\def\into{\longarrow{\hookrightarrow}{\lhook\joinrel\longrightarrow}}

\let\shortmapsto\mapsto
\def\mapsto{\longarrow{\shortmapsto}{\longmapsto}}

\def\setmid#1#2{{\left\{{#1}\;\middle|\;{#2}\right\}}}

\def\Bl{\operatorname{Bl}}
\def\Gr{\operatorname{Gr}}
\def\Hom{\operatorname{Hom}}
\def\KKK{\mathrm{K3}}

\begin{document}
\baselineskip=14pt

\title[Gromov-Witten theory and Noether-Lefschetz theory]{
Gromov-Witten theory and\\Noether-Lefschetz theory\\for
holomorphic-symplectic varieties}

\author{Georg Oberdieck,\\
with an appendix by Jieao Song}
\address{University of Bonn}
\email{georgo@math.uni-bonn.de}

\begin{abstract}
We use Noether-Lefschetz theory to study the reduced Gro\-mov--Witten invariants of a holomorphic-symplectic variety of $K3^{[n]}$-type. This yields strong evidence for a new conjectural formula that expresses Gromov-Witten invariants of this geometry for arbitrary classes in terms of primitive classes. The formula generalizes an earlier conjecture by Pandharipande and the author for K3 surfaces. Using Gromov-Witten techniques we also determine the generating series of Noether-Lefschetz numbers of a general pencil of Debarre-Voisin varieties. This reproves and extends a result of Debarre, Han, O'Grady and Voisin on HLS divisors on the moduli space of Debarre-Voisin fourfolds. 
\end{abstract}
% 
% 
% \date{\today}
% \begin{abstract}
% \end{abstract}
% 
\maketitle
\setcounter{tocdepth}{1} 
\tableofcontents
\setcounter{section}{-1}
\section{Introduction}
\subsection{K3 surfaces} \label{sec:intro K3}
Gromov-Witten theory is the intersection theory of the moduli space $\Mbar_{g,n}(X,\beta)$ of stable maps to a target $X$ in degree $\beta \in H_2(X,\BZ)$.
If $X$ carries a holomorphic symplectic form, the virtual fundamental class of the moduli space vanishes.
Instead Gromov-Witten theory is defined through the 
reduced virtual fundamental class 
\[ [ \Mbar_{g,n}(X,\beta) ]^{\text{red}} \in A_{\ast}(\Mbar_{g,n}(X,\beta)). \]
When working with reduced Gromov-Witten invariants, one observes in many examples the following dichotomy:
\begin{enumerate}
\item[1.] The invariants are notoriously difficult to compute, in particular if the class $\beta$ is not primitive.
\item[2.] The structure of the invariants is simpler than for general target~$X$.
That is, the invariants have additional non-geometric symmetries
such as the independence (understood correctly) from the divisibility of the curve class.
\end{enumerate}
Physicists would say that $X$ has additional super-symmetry, which should explain this phenomenon.
A mathematical explanation is unfortunately very much missing so far.

As an example, let us consider a K3 surface $S$ and the Hodge integral
\[ R_{g,\beta} = \int_{ [ \Mbar_{g,n}(S,\beta) ]^{\text{red}} } (-1)^g \lambda_g. \]
We can formally subtract "multiple cover contributions" from $\beta$ by\footnote{The notation
$\widetilde{r}_{g,\beta}$ in choosen to clearly distinguish from the Gopakumar-Vafa BPS invariants $r_{g,\beta}$
which appear in \cite{PT_KKV}. The relationship between these two sets of invariants is discussed in Appendix~\ref{sec:comparision with GV}.}
\[ \widetilde{r}_{g,\beta} = \sum_{k | \beta} k^{2g-3} \mu(k) R_{g, \beta/k} \]
where we have used the M\"obius function
\[ \mu(k) =
\begin{cases}
(-1)^{\ell} & \text{ if } k = p_1 \cdots p_{\ell} \text{ for distinct primes } p_i \\
0 & \text{ else }.
\end{cases}
\]
By deformation invariance, $\widetilde{r}_{g,\beta}$ depends on $(S,\beta)$
only through the divisibility $m = \div(\beta)$ and the square $s = \beta \cdot \beta$.
One writes
\[ \widetilde{r}_{g,\beta} = \widetilde{r}_{g,m,s}. \]

The following remarkable result by Pandharipande and Thomas shows that the invariants $\widetilde{r}_{g,\beta}$ do not depend on the divisibility.
\begin{thm}[\cite{PT_KKV}] \label{thm:PT} For all $g,m,s$ we have: $\widetilde{r}_{g,m,s} = \widetilde{r}_{g,1,s}$. \end{thm}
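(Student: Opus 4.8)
The plan is to prove the divisibility independence by passing from the reduced Gromov--Witten side to a sheaf-theoretic count, where the dependence on $\beta$ is controlled purely by the square $\beta\cdot\beta$. First I would use the reduced $\GW/\PT$ correspondence for the K3 surface $S$ to rewrite the $\lambda_g$-series $\sum_g R_{g,\beta}\, u^{2g-2}$ in terms of the reduced stable pairs invariants $P_{n,\beta}(S)$: by deformation invariance of the reduced theory the series $\sum_n P_{n,\beta}(S)\, y^n$ is a Laurent polynomial, and its specialization at $y=-e^{iu}$ should recover the $\lambda_g$-generating function. The point of this first step is that the M\"obius subtraction $\widetilde{r}_{g,\beta}=\sum_{k\mid\beta}\mu(k)k^{2g-3}R_{g,\beta/k}$ is precisely designed to strip off the multiple-cover contributions coming from the sublattices $\tfrac{1}{k}\beta$, so that $\widetilde{r}_{g,\beta}$ is to be identified with a genuine, ``connected'' Gopakumar--Vafa-type count attached to curves in the class $\beta$ itself.

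Next I would make this identification precise through the Maulik--Toda description of Gopakumar--Vafa invariants. Let $M_\beta$ be the moduli space of one-dimensional stable sheaves on $S$ with Mukai vector $v=(0,\beta,\chi)$, and let $\pi\colon M_\beta\to |\beta|$ be the support (Le Potier) morphism to the linear system. The genus-$g$ BPS number is read off from the perverse filtration on $H^{\ast}(M_\beta)$ associated to $\pi$, and after reconciling the naive subtraction with the true Gopakumar--Vafa invariants (the comparison of Appendix~\ref{sec:comparision with GV}) one obtains that $\widetilde{r}_{g,\beta}$ is a \emph{topological} invariant of the pair $(M_\beta,\pi)$. In particular it is unchanged under any deformation of $S$ and of $\beta$ keeping $(M_\beta,\pi)$ in a flat family.

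The final step is the geometric heart. By the theory of moduli of sheaves on K3 surfaces (Mukai, O'Grady, Yoshioka), $M_\beta$ is a holomorphic-symplectic variety of dimension $\beta\cdot\beta+2$, deformation equivalent to $\Hilb^{\beta\cdot\beta/2+1}(S)$, and the whole package $(M_\beta,\pi)$ deforms, within holomorphic-symplectic geometry, to the corresponding data for \emph{any} class of the same square $s=\beta\cdot\beta$ --- in particular to a primitive class $\beta_0$ with $\beta_0\cdot\beta_0=s$. Since $\widetilde{r}_{g,\beta}$ was shown to be a deformation-invariant topological quantity of $(M_\beta,\pi)$, this yields $\widetilde{r}_{g,m,s}=\widetilde{r}_{g,1,s}$ at once.

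I expect the main obstacle to be exactly the imprimitive case that the theorem is about. When $m=\div(\beta)>1$ the Mukai vector $v$ is non-primitive, so $M_\beta$ carries strictly semistable sheaves and is singular (O'Grady's situation); both the Maulik--Toda definition and its deformation invariance become delicate, and one must show that the relevant BPS count is unchanged as the Mukai vector is moved across the stability walls separating it from a primitive vector of the same square. On the Gromov--Witten side this is precisely the content that does \emph{not} follow from the primitive Katz--Klemm--Vafa formula alone. Establishing this wall-crossing / multiple-cover invariance --- for which a degeneration of K3-fibered threefolds together with the $\GW/\PT$ correspondence is the natural tool --- is where the real work will lie.
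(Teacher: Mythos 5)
Your argument, by your own admission in the last paragraph, is not a proof: the step that carries all the weight is exactly the one you leave open. The final ``geometric heart'' asserts that the package $(M_\beta,\pi)$ for an imprimitive class deforms to the corresponding package for a primitive class of the same square. No such statement can be quoted, and nothing like it follows from Mukai--O'Grady--Yoshioka theory: what that theory gives is deformation equivalence of $M_\beta$ as an abstract holomorphic-symplectic variety (indeed to $\Hilb_{\beta\cdot\beta/2+1}(S)$), not of the pair $(M_\beta,\pi)$ together with the identification of its perverse-filtration invariants with curve counts in the class $\beta$ on a K3. At the level of pairs $(S,\beta)$ the analogous move is outright impossible, since $\div(\beta)$ is a deformation invariant of the pair; the entire content of Theorem~\ref{thm:PT} is that the invariants nevertheless do not see it. Note also that strict semistability is something of a red herring: for $\div(\beta)=m>1$ one may choose $\chi$ coprime to $m$ so that $(0,\beta,\chi)$ is primitive and $M_\beta$ smooth; the true obstruction is that no family of Lagrangian fibrations interpolating between the imprimitive and primitive situations is available. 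In \cite{PT_KKV} this gap is bridged by a completely different mechanism --- Noether-Lefschetz theory to pass to K3-fibered threefolds, the Gromov-Witten/stable pairs correspondence there, and derived-equivalence/wall-crossing arguments on the pairs side --- so your reduction does not simplify the problem; it reformulates it into a statement at least as hard as the theorem itself.

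Two further points. First, your step 2 rests on the Maulik--Toda perverse-filtration definition of Gopakumar--Vafa invariants and its agreement with (reduced) Gromov--Witten theory; this is a conjecture, not a theorem, so even the claim that $\widetilde{r}_{g,\beta}$ is a topological invariant of $(M_\beta,\pi)$ is unproven. Second, for comparison with the text: the paper does not reprove this statement at all. It cites \cite{PT_KKV} for the divisibility-independence of the BPS counts $r_{g,\beta}$ defined by the $\sin(u/2)$-expansion \eqref{defn BPS}, and its own contribution is only the elementary bridge of Appendix~\ref{sec:comparision with GV}: Lemma~\ref{lemma:r rtilde} expresses $\widetilde{r}_{\tilde{g},\beta}$ as an upper-triangular, genus-mixing linear combination of the $r_{g,\beta}$, proved by a short generating-series manipulation and M\"obius inversion. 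That lemma is exactly the ``reconciliation'' you allude to, but note it is not an equality of $\widetilde{r}_{g,\beta}$ with a single BPS number; only the equivalence of the two divisibility-independence statements survives, which is all the paper needs and all your sketch could hope to use.
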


The calculation of the primitive invariants $\widetilde{r}_{g,1,s}$ is much easier compared to the imprimitive case and was performed first in \cite{MPT}.
Several other proofs are available in the literature by now. Together with the theorem this yields a complete determination of $\widetilde{r}_{g, \beta}$,
called the Katz-Klemm-Vafa formula \cite{KKV}.

In \cite{K3xE} it was conjectured that more generally \emph{any} Gromov-Witten invariant of a K3 surface is independent
of the divisibility, after subtracting multiple covers.
The divisibility $2$ case was solved recently \cite{BB}, but the general case remains a challenge.

\subsection{Holomorphic-symplectic varieties}
A smooth projective variety $X$ is (irreducible) holomorphic-symplectic if it is simply connected and 
the space of holomorphic $2$-forms $H^0(X,\Omega_X^2)$ is spanned by a symplectic form.
These varieties can be viewed as higher-dimensional analogues of K3 surfaces.
For example, the cohomology $H^2(X,\BZ)$ carries a canonical non-de\-generate integer-valued quadratic form.
The prime example of a holomorphic-symplectic variety is the Hilbert scheme of $n$ points of a K3 surface and its deformations,
which we call varieties of $K3^{[n]}$ type.\footnote{We also allow the case $n=1$ below, that is our formulas apply also to the case of K3 surfaces,
where they reduce to \cite{K3xE}.}

We conjecture in this paper
that the Gromov-Witten theory of $K3^{[n]}$-type varieties
is independent of the divisibility of the curve class,
made precise in the following sense:
Let $\beta \in H_2(X,\BZ)$ be an effective (hence non-zero) curve class, and consider the Gromov-Witten class
\[ \CC_{g,N,\beta}(\alpha) = \ev_{\ast}\left( \tau^{\ast}(\alpha) \cap [ \Mbar_{g,N}(X,\beta) ]^{\text{red}} \right) \in H^{\ast}(X^N) \]
where $\tau : \Mbar_{g,N}(X,\beta) \to \Mbar_{g,N}$ is the forgetful morphism to the moduli space of curves
and $\alpha \in H^{\ast}(\Mbar_{g,N})$ is a tautological class \cite{FP}.
The classes $\CC_{g,N,\beta}(\alpha)$ encode the full numerical Gromov-Witten theory of $X$.

We formally subtract the multiple cover contributions from this class:
\begin{equation} \label{abc}
\mathsf{c}_{g,N,\beta}(\alpha) = \sum_{k|\beta} \mu(k) k^{3g-3+N-\deg(\alpha)} (-1)^{[\beta] + [\beta/k]} \CC_{g,N,\beta/k}(\alpha)
\end{equation}
where we use that the residue of $\beta$ with respect to the quadratic form 
%$\deg( - )$ is half the cohomological degree and 
\[ [\beta] \in H_2(X,\BZ)/ H^2(X,\BZ) \]
can be canonically identified up to multiplication by $\pm 1$ with an element of $\BZ / (2n-2) \BZ$, see Section~\ref{subsec:curve classes}.

Let $X'$ be any variety of $K3^{[n]}$-type and let
\[ \varphi : H^2(X,\BR) \to H^2(X',\BR) \]
be any \emph{real isometry} such that $\varphi(\beta) \in H_2(X',\BZ)$ is a \emph{primitive} effective curve class satisfying
\[ \pm [ \varphi(\beta) ] = \pm [\beta] \ \text{ in } \ \BZ/(2n-2) \BZ. \]
Extend $\varphi$ to the full cohomology as a parallel transport lift (Section~\ref{Subsec:Parallel Transport lifts})
\[ \varphi : H^{\ast}(X,\BR) \to H^{\ast}(X', \BR). \]

The following is the main conjecture:
\begin{conjecture} \label{conjecture:MC_intro}
\[ \mathsf{c}_{g,N,\beta}(\alpha) = \varphi^{-1}\left( \CC_{g,N,\varphi(\beta)}(\alpha) \right) \]
\end{conjecture}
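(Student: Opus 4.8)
The plan is to split Conjecture~\ref{conjecture:MC_intro} into a part governed by deformation invariance and a genuinely new part, and to attack the latter through the Noether-Lefschetz/Gromov-Witten correspondence. First I would observe that when the real isometry $\varphi$ is realized by an \emph{integral} isometry---equivalently, by a parallel transport operator along a path in the moduli of $K3^{[n]}$-type varieties---the asserted equality is nothing but deformation invariance of the reduced class $[\Mbar_{g,N}(X,\beta)]^{\mathrm{red}}$ together with the functoriality of $\ev_{\ast}$ and $\tau^{\ast}$. By Eichler's criterion for the lattice $H^2(X,\BZ)$, which contains two hyperbolic planes, the monodromy orbit of an integral class is pinned down by its square and its residue; hence the only content beyond deformation invariance is the comparison of a class $\beta$ of divisibility $m=\div(\beta)$ with a \emph{primitive} class $\varphi(\beta)$ of the same square $s=\beta\cdot\beta$ and residue $[\beta]$. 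This is the divisibility-independence statement, and the Möbius combination defining $\mathsf{c}_{g,N,\beta}(\alpha)$ in \eqref{abc} is exactly the piece that should survive the comparison.

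For this divisibility-independence part I would set up a one-parameter family $\pi:\mathcal X\to C$ of $K3^{[n]}$-type varieties over a curve, in which $\beta$ lives as a section of the local system $R^2\pi_{\ast}\BZ$ but is algebraic only on a special, Noether-Lefschetz locus of fibers. The total space $\mathcal X$ is no longer holomorphic-symplectic, so its \emph{ordinary} Gromov-Witten invariants are well defined. The Noether-Lefschetz/Gromov-Witten correspondence then writes these total-space invariants as a finite weighted sum, over the Noether-Lefschetz locus, of the fiberwise \emph{reduced} classes $\CC_{g,N,\beta}(\alpha)$, the weights being the Noether-Lefschetz numbers of the family. The key leverage is that, by the work of Borcherds and Kudla-Millson, these Noether-Lefschetz numbers are Fourier coefficients of modular forms and depend on $\beta$ only through the pair $(s,[\beta])$, and not through its divisibility as an integral class in a given fiber.

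I would then compare families whose Noether-Lefschetz loci realize the same $(s,[\beta])$ but different divisibilities, and argue that the total-space invariants---being honest intersection numbers on $\mathcal X$---are likewise controlled only by $(s,[\beta])$. Inverting the resulting linear relations should isolate the fiberwise reduced invariants and yield the conjectured identity, with the powers $k^{3g-3+N-\deg(\alpha)}$ reflecting the dimension $3g-3+N$ of $\Mbar_{g,N}$ and the degree of the tautological class $\alpha$, and the signs $(-1)^{[\beta]+[\beta/k]}$ encoding the residue-orientation data of the correspondence. For $n=1$ this is precisely the route, through the Katz-Klemm-Vafa formula and the quasi-modularity of the generating series, that establishes Theorem~\ref{thm:PT}.

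The main obstacle is twofold. On the geometric side, the Noether-Lefschetz/Gromov-Witten correspondence produces only \emph{linear relations} among many fiberwise invariants simultaneously, so one must exhibit enough independent families---and a rigidity argument---to determine the divisibility dependence rather than merely constrain it. More seriously, inverting these relations requires knowing that the generating series of reduced fiberwise invariants is governed by modular forms whose behavior under the relevant Hecke-type operators reproduces the Möbius/multiple-cover combination of \eqref{abc}. For $n=1$ this modularity is the established content of the Katz-Klemm-Vafa program; for $n\ge 2$ it is itself open, and supplying it is exactly the new input the conjecture demands. I therefore expect the Noether-Lefschetz relations to \emph{verify} the conjecture in many numerical cases---furnishing the ``strong evidence'' promised in the abstract---while a complete proof awaits the higher-dimensional modularity.
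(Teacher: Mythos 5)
Your proposal is essentially the paper's own treatment: this statement is the paper's main \emph{conjecture}, not a theorem, and the paper supports it exactly along the route you describe --- deformation invariance plus Eichler's criterion reduce it to divisibility-independence, and the Maulik--Pandharipande Gromov--Witten/Noether--Lefschetz relation for generic pencils (Fano varieties of cubic fourfolds, Debarre--Voisin fourfolds), combined with mirror-symmetry computations of the total-space invariants and Borcherds--McGraw modularity of the Noether--Lefschetz numbers, yields verification in an infinite list of numerical cases (checked up to degree $38$) rather than a proof. Your closing caveat --- strong evidence, with a genuine proof requiring new input --- is precisely the paper's own position; the one refinement worth noting is that the Noether--Lefschetz numbers entering the relation are the \emph{refined} ones, which do track divisibility, and the conjecture is equivalent to the relation collapsing to the unrefined, purely modular ones.
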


The right hand side of the conjecture is given by the Gromov-Witten theory for a primitive class.
Hence the conjecture reduces calculations in imprimitive classes (which are hard) to those for primitive ones (which are easier).
A different but equivalent version of the conjecture is formulated in Section~\ref{subsec:mc conjecture}.
The equivalent version shows that the above reduces for K3 surfaces to the conjecture \cite[Conj.C2]{K3xE}.

The moduli space $\Mbar_{g,0}(X,\beta)$ is of reduced virtual dimension
\begin{equation} (\dim (X) - 3)(1-g) + 1.\label{vd332t} \end{equation}
Hence the Gromov-Witten theory of $X$ of $K3^{[n]}$-type vanishes for $g>1$ if $n \geq 3$,
and for $g > 2$ if $n=2$.
Moreover, for $g=1$ the virtual dimension \eqref{vd332t} is always one-dimensional, so relatively small.
Hence in dimension $>2$ Conjecture~\ref{conjecture:MC_intro} mainly concerns the genus zero theory.
Our main evidence in genus $g>0$ comes from the case of K3 surfaces \cite{PT_KKV, BB, K3xE}
and the remarkable fact that there is a single formula which governs all $K3^{[n]}$ at the same time.
%and that a simple uniform formula can be formulated for all $g$ and $n$.

In genus $0$, up to the sign $(-1)^{[\beta]+[\beta/k]}$, \eqref{abc} is precisely the formula that defines the BPS numbers of a Calabi-Yau manifold in
terms of Gromov-Witten invariants.
However, the appearence of the sign is a new feature, particular to the holomorphic-symplectic case.
For example, it does not appear in the definition of genus $0$ BPS numbers of Calabi-Yau 4-folds as given by Klemm-Pandharipande \cite{KP}.
We expect a similar multiple cover formula to hold for all holomorphic-symplectic varieties.
What stops us from formulating it is that the precise term that generalizes the sign
is not clear (aside from that there would be no evidence available at all).

In the appendix we also formulate a multiple cover rule for abelian surfaces,
extending a proposal for abelian varieties in \cite{BOPY}.

\subsection{Noether-Lefschetz theory}
There are three types of invariants
associated to a $1$-parameter family $\pi : \CX \to C$
of quasi-polarized holomorphic-symplectic varieties: % (Section~\ref{sec:NL theory}):
\begin{enumerate}
\item[(i)] the Noether-Lefschetz numbers of $\pi$,
\item[(ii)] the Gromov-Witten invariants of $\CX$ in fiber classes,
\item[(iii)] the reduced Gromov-Witten invariants of a holomorphic-symplectic fiber of the family.
\end{enumerate}
We refer to Section~\ref{sec:NL theory} for the definition of a $1$-parameter family of quasi-polarized holo\-morphic-symp\-lectic varieties and its Noether-Lefschetz numbers.
By a result of Maulik and Pandharipande \cite{MP}, there is a geometric relation
intertwining these three invariants.
This relation (for a carefully selected family $\pi$) was used in \cite{KMPS}
to prove Theorem~\ref{thm:PT} in genus $0$, and than later in \cite{PT_KKV} in the general case.
Roughly, for a nice family the relation becomes invertible, and reduces the problem to considering
ordinary Gromov-Witten invariants
of a K3-fibered threefold which then can be attacked by more standard methods.

In this paper we follow the same strategy for holomorphic symplectic varieties of $K3^{[n]}$-type.
We first discuss the Maulik-Pandharipande relation in this case (Section~\ref{sec:GWNL theory}),
and then apply it in two cases:
\begin{itemize}
\item[(i)] A generic pencil of Fano varieties of a cubic fourfold \cite{BD} :
\[ \CX \subset \Gr(2,6) \times \p^1, \quad \pi : \CX \to \p^1. \]
\item[(ii)] A generic pencil of Debarre-Voisin varieties  \cite{DV}:
\[ \CX \subset \Gr(6,10) \times \p^1, \quad \pi : \CX \to \p^1. \]
\end{itemize}

The examples are choosen such that the Gromov-Witten invariants of $\CX$ can be computed using
mirror symmetry \cite{CFK}. For the family of Fano varieties, the Noether-Lefschetz numbers have already been computed by Li and Zhang \cite{LZ}.
The Gromov-Witten/Noether-Lefschetz relation for the Fano family then yields an
explicit infinite family of relations that need to hold for Conjecture~\ref{conjecture:MC_intro} to be true.
Using a computer we checked that these relations are satisfied up to degree $d \leq 38$ (with respect to the Pl\"ucker polarization).
%this check was successful for degree (with respect to the Pl\"ucker polarization) $d \leq 38$.
The relations involve curve classes of both high self-intersection and high divisibility.
Together with previously known cases we also obtain the following:
\begin{prop} \label{prop:first cases intro}
In genus $0$ and $K3^{[2]}$-type Conjecture~\ref{conjecture:MC_intro} holds for $\beta = m \cdot \alpha$ where $\alpha$ is primitive whenever:
\begin{itemize}
\item $(\alpha, \alpha) < 0$, or
\item $(\alpha, \alpha) = 0$ and ($m=2$ or $N=1$), or
%\item $(\alpha, \alpha) = 0$ and $m=2$
\item $(\alpha, \alpha) = 3/2$ and $m \in \{ 2,3,5 \}$.
\end{itemize}
%where $N$ is the number of marked points.
\end{prop}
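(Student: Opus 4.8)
The plan is to verify the identity $\mathsf{c}_{0,N,\beta}(\alpha) = \varphi^{-1}(\CC_{0,N,\varphi(\beta)}(\alpha))$ separately in each of the three regimes, since a different geometric input is available in each. Throughout I would lean on deformation invariance of the reduced classes together with the global Torelli theorem for $K3^{[2]}$-type: this reduces every instance to the discrete data $(m, s, [\beta])$ and frees me to choose a convenient model of the pair $(X,\beta)$.

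For the negative-square regime $(\alpha,\alpha) < 0$ I would deform $(X,\beta)$ to a geometry in which $\alpha$ is represented by a rigid rational curve in the exceptional locus of a birational contraction. In genus $0$ the reduced theory in the class $m\alpha$ is then concentrated on multiple covers of this curve, so that both $\CC_{0,N,m\alpha}$ and the primitive invariant on the right collapse to the universal local multiple-cover contribution of a contractible $\p^1$. The M\"obius-weighted subtraction in \eqref{abc} is precisely what the standard local multiple-cover formula requires, and the identity follows by matching these local contributions before and after applying $\varphi$. For the isotropic regime $(\alpha,\alpha) = 0$ I would instead pass to a model carrying a Lagrangian fibration $X \to \p^2$ with $\alpha$ proportional to the class of a fibre. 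When $N = 1$ the reduced virtual dimension forces the evaluation image into a single cohomological degree, and a direct analysis of the abelian-surface fibration over its discriminant evaluates both sides; when $m = 2$ I would reduce the divisibility-$2$ statement to the corresponding K3-surface theorem of \cite{BB}, using that one multiple-cover formula is meant to govern all $K3^{[n]}$ at once.

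The principal case is $(\alpha,\alpha) = 3/2$ with $m \in \{2,3,5\}$, where the new input enters through the generic pencil of Fano varieties of cubic fourfolds $\CX \subset \Gr(2,6) \times \p^1$. Here I would run the Maulik--Pandharipande Gromov--Witten/Noether--Lefschetz relation for $\pi : \CX \to \p^1$: the Noether--Lefschetz numbers are supplied by \cite{LZ}, the fibre-class Gromov--Witten invariants of $\CX$ are computed by mirror symmetry \cite{CFK}, and the relation converts these into an explicit infinite family of linear identities among the reduced invariants $\CC_{0,N,\beta}$ of a $K3^{[2]}$-type fibre. Isolating from this family the identities attached to the primitive square $3/2$ and the divisibilities $m \in \{2,3,5\}$ --- exactly the cases in which both the imprimitive class $m\alpha$ and a primitive class of the same square $m^2 \cdot \tfrac{3}{2}$ fall inside the verified Pl\"ucker-degree window $d \le 38$ --- and matching them against the primitive right-hand side yields the claim. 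I expect the main obstacle to lie in this last bookkeeping: one must solve the GW/NL system to isolate a single imprimitive invariant from a Noether--Lefschetz generating series that mixes all curve classes of a given degree, and then confirm that the finite window $d \le 38$ supplies enough independent relations to pin down --- rather than merely constrain --- the three target divisibilities.
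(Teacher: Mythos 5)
The genuine gap is in your second regime. For $(\alpha,\alpha)=0$ and $m=2$ you propose to reduce to the divisibility-two theorem for K3 surfaces of \cite{BB}, ``using that one multiple-cover formula is meant to govern all $K3^{[n]}$ at once.'' No such reduction exists: the fact that a single formula governs every $n$ is a feature of the conjecture's \emph{shape}, not a logical implication between its instances, and the K3-surface result says nothing about reduced invariants of a $K3^{[2]}$-type variety (different moduli spaces, different curve classes, and a residue-dependent sign $(-1)^{[\beta]+[\beta/k]}$ that is trivial for K3 surfaces). In the paper this case is obtained from exactly the same cubic-fourfold pencil computation as your third regime: it is the case $(m,(\alpha,\alpha))=(2,0)$, determined --- together with $(2,3/2)$, $(3,3/2)$, $(5,3/2)$ --- by the Gromov--Witten/Noether--Lefschetz relation \eqref{abcc} at Pl\"ucker degrees $6,8,9,15$. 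Your treatment of the $N=1$ isotropic case is also not a proof: ``a direct analysis of the abelian-surface fibration\dots evaluates both sides'' asserts the answer rather than computing it, and the imprimitive invariants are precisely the hard part. What the paper actually does (Proposition~\ref{prop:first cases}) is intersect the one-pointed class with a curve and reduce, by the degeneration methods of \cite{HilbK3}, to $\Hilb_2(\p^1\times E)$, where the relevant series is evaluated in \cite[Thm.~9]{HilbK3}. So two of the sub-cases of the proposition are not reached by your proposal as written.

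Elsewhere your proposal is close to the paper. The third regime is essentially the paper's argument: mirror symmetry \cite{CFK} for the pencil $\CX\subset\Gr(2,6)\times\p^1$, Noether--Lefschetz numbers from \cite{LZ}, and the relation \eqref{abcc}; the subtlety you flag --- that the system must \emph{determine}, not merely constrain, the unknown imprimitive invariants --- is exactly what the computer check settles, and uniqueness already holds at the four degrees above (the window $d\le 38$ serves only as further evidence for the conjecture beyond the proposition). For $(\alpha,\alpha)<0$ the paper likewise reduces to known local computations, but your geometric picture needs correcting: rational curves in a holomorphic-symplectic fourfold are never rigid (minimal ones sweep out uniruled divisors), so there is no ``rigid rational curve'' to take multiple covers of; the actual input is the quantum cohomology of the Hilbert scheme of points of $\BC^2$ and of the $A_n$-resolutions \cite{OkPandHilbC2, MO}, whose degree-scaling property plays the role of your Aspinwall--Morrison-type local contribution. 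With that substitution your first regime becomes the paper's citation.
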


%In dimension $2$ it is relatively easy to find families of K3 surfaces, polarized by higher rank lattices.
Ideally we would like to apply our methods to other families.
However, it is quite difficult to find appropriate $1$-parameter families.
They must be (a) a zero section of a homogeneous vector bundle on the GIT quotient of a vector space by a reductive group,
and (b) their singular fibers must have mild singularieties.
A promising candidate seemed to be a generic pencil of Iliev-Manivel fourfolds \cite{IM}
\[ \CX \subset \Gr(2,4) \times \Gr(2,4) \times \Gr(2,4) \times \p^1, \quad \pi : \CX \to \p^1 \]
%cut out by $\CU_1^{\ast} \otimes \CU_2^{\ast} \otimes \CU_3^{\ast}$
but unfortunately the singular fibers appear to be too singular.\footnote{Another candidate
is the family of Fano varieties of Pfaffian cubics, $X \subset \Gr(4,6) \times \Gr(2,6)$, found by
Fatighenti and Mongardi \cite{FM}.}

\subsection{Debarre-Voisin fourfolds}
For the generic pencil $\pi : \CX \to \p^1$ of Debarre-Voisin fourfolds
the Noether-Lefschetz numbers have not yet been determined.
Instead we use the known cases of Conjecture~\ref{conjecture:MC_intro} and the mirror symmetry calculations for the total space $\CX$
to obtain constraints for the Noether-Lefschetz numbers.
By using the modularity of the generating series
of Noether-Lefschetz numbers due to Borcherds and McGraw \cite{Bor2, McGraw} we can then determine the full series.

Consider the generating series of Noether-Lefschetz numbers of $\pi$ as defined in Section~\ref{subsubsec:Borcherds form},
\[ \varphi(q) = \sum_{D \geq 0} q^{D/11} \NL^{\pi}(D) \]
where $D$ runs over all squares modulo $11$.

Define the weight $1,2,3$ modular forms
\begin{gather*}
E_{1}(\tau) = 1 + 2 \sum_{n \geq 1} q^n \sum_{d|n} \chi_p\left( \frac{n}{d} \right), \quad \ 
\Delta_{11}(\tau) = \eta(\tau)^2 \eta(11 \tau)^{2} \\
E_{3}(\tau) = \sum_{n \geq 1} q^n \sum_{d|n} d^{2} \chi_p\left( \frac{n}{d} \right)
\end{gather*}
where $\eta(\tau) = q^{1/24} \prod_{n \geq 1} (1-q^n)$ is the Dirichlet eta function,
$q = e^{2 \pi i \tau}$ and $\chi_{11}$ is the Dirichlet character given by the Legendre symbol $\left(\frac{ \cdot }{11} \right)$.
Consider the following weight $11$ modular forms for $\Gamma_0(11)$ and character $\chi_{11}$:
\begin{small}
\begin{align*}
\varphi_0(q) & = -5 \, E_{1}^{11} + 430 \, E_{1}^{8} E_{3} + \frac{5199920}{9} \, \Delta_{11}^{3} E_{1}^{5} - \frac{35407490}{27} \, \Delta_{11}^{4} E_{1}^{3} \\
&\ \ \  + \frac{49194440}{9} \, \Delta_{11}^{2} E_{1}^{4} E_{3} 
+ 248350 \, E_{1}^{5} E_{3}^{2} - \frac{596661440}{27} \, \Delta_{11}^{3} E_{1}^{2} E_{3} \\
&\ \ \  - \frac{306631760}{9} \, \Delta_{11} E_{1}^{3} E_{3}^{2} + \frac{51243500}{3} \, \Delta_{11}^{4} E_{3}
+ \frac{1331452540}{27} \, \Delta_{11}^{2} E_{1} E_{3}^{2}\\
&\ \ \  + \frac{349019440}{9} \, E_{1}^{2} E_{3}^{3} \\
& = -5 + 320q + 255420q^{2} + 14793440q^{3} + 262345260q^{4} + \ldots \\
\varphi_1(q) & = 
-5 \, E_{1}^{11} + 110 \, E_{1}^{8} E_{3} + \frac{722740}{3993} \, \Delta_{11}^{3} E_{1}^{5} - \frac{1805750}{3993} \, \Delta_{11}^{4} E_{1}^{3} \\
&\ \ \  - \frac{12660620}{11979} \Delta_{11}^{2} E_{1}^{4} E_{3} - 990 E_{1}^{5} E_{3}^{2} + \frac{118940}{363} \Delta_{11}^{5} E_{1} + \frac{5609180}{3993} \Delta_{11}^{3} E_{1}^{2} E_{3} \\
&\ \ \  + \frac{29208460}{11979} \, \Delta_{11} E_{1}^{3} E_{3}^{2} + \frac{3500}{33} \, \Delta_{11}^{4} E_{3} + \frac{2610980}{1089} \, E_{1}^{2} E_{3}^{3} \\
& = -5 + 320q^{11} + 990q^{12} + 5500q^{14} + 11440q^{15} + \ldots
\end{align*}
\end{small}

\begin{thm} \label{thm:DV NL series}
Let $\pi : \CX \to \p^1$ be a generic pencil of Debarre-Voisin fourfolds. Then
the generating series of its Noether-Lefschetz numbers is:
\begin{align*}
\varphi(q^{11}) = \varphi_0(q^{11}) + \varphi_1(q) = 
 -10 + 640 q^{11} + 990 q^{12} + 5500 q^{14} + 11440 q^{15} \\ + 21450 q^{16} 
+ 198770 q^{20} + 510840 q^{22} + \ldots  \\
\end{align*}
\end{thm}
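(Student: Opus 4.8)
The plan is to run the Gromov--Witten/Noether--Lefschetz correspondence of Maulik--Pandharipande \cite{MP}, in the form developed in Section~\ref{sec:GWNL theory}, for the pencil $\pi : \CX \to \p^1$, and then to close the resulting system of constraints using the modularity of the Noether--Lefschetz generating series. The correspondence expresses each fiber-class Gromov--Witten invariant of the total space $\CX$ as a finite sum $\sum_D \NL^{\pi}(D) \cdot r(D)$, where $D$ ranges over the Noether--Lefschetz conditions contributing in a fixed Pl\"ucker degree and $r(D)$ is a reduced Gromov--Witten invariant of a Debarre--Voisin fiber (a variety of $K3^{[2]}$-type) in the corresponding curve class. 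The strategy is to compute both the left-hand side and the coefficients $r(D)$ independently, so that each such identity becomes a linear equation for the unknowns $\NL^{\pi}(D)$.

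First I would evaluate the left-hand side. Since $\CX$ is the zero locus of a section of a homogeneous vector bundle on $\Gr(6,10) \times \p^1$, its genus-$0$ fiber-class Gromov--Witten invariants are accessible via the mirror-symmetry/quantum-Lefschetz techniques of \cite{CFK}: one writes down the relevant $I$-function, passes to the $J$-function by the mirror transformation, and extracts the fiber-class invariants with the correct normalization. Next I would supply the coefficients $r(D)$: for primitive classes these are governed by the established $K3^{[2]}$ Katz--Klemm--Vafa-type formula, while for the imprimitive classes of Proposition~\ref{prop:first cases intro} they are determined by the now-known cases of Conjecture~\ref{conjecture:MC_intro}. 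Together this turns the correspondence into an explicit linear equation for the $\NL^{\pi}(D)$ in each Pl\"ucker degree within the computable range.

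Finally I would invoke the Borcherds--McGraw modularity theorem \cite{Bor2, McGraw}: the series $\varphi$ is a modular form of weight $11$ for $\Gamma_0(11)$ with character $\chi_{11}$, realized through the two components $\varphi_0$ and $\varphi_1$ attached to the relevant residue classes. Since this space is finite-dimensional, finitely many Fourier coefficients determine $\varphi$ uniquely; I would check that the equations above fix at least that many low-order coefficients and then read off the identity $\varphi(q^{11}) = \varphi_0(q^{11}) + \varphi_1(q)$.

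The main obstacle I anticipate is non-degeneracy: Proposition~\ref{prop:first cases intro} supplies only finitely many values $r(D)$, so one must verify that the resulting constraints determine enough coefficients of $\varphi$ relative to the dimension of the ambient space of modular forms. A secondary difficulty is the mirror-symmetry computation itself, which requires the correct bundle, $I$-function, and normalization for the Debarre--Voisin family --- delicate, though mechanical in principle. Should the constraints prove insufficient, one would fall back on additional proven cases of the conjecture, or on the explicit theta decomposition underlying the Borcherds form, to pin down the remaining coefficients.
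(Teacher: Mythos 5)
Your strategy coincides with the paper's own: extend the Gromov--Witten/Noether--Lefschetz relation to the pencil, compute the fiber-class invariants of $\CX$ by mirror symmetry, feed in the known multiple-cover cases to turn each degree into a linear equation on the $\NL^{\pi}(D)$, and close the system by modularity. But there is a genuine gap precisely at the point you flag as ``non-degeneracy,'' and it is not repaired by the fallbacks you suggest. The relevant space of vector-valued modular forms has dimension $6$ for $p=11$ (Table~\ref{uniruled_table}), while the GW/NL correspondence supplies only five usable equations: by the paper's computer check, for Pl\"ucker degrees $d \leq 13$ only multiple-cover cases covered by Proposition~\ref{prop:first cases} enter the relation, and the degrees $1 \leq d \leq 5$ yield the five equations \eqref{eqnsa2}. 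Five linear conditions on a six-dimensional space leave a one-parameter family undetermined. The paper's sixth condition is not another GW/NL equation but a separate geometric computation, Lemma~\ref{lemma:ic}: $\NL^{\pi}(0) = -10$, the degree of $R^2\pi_{\ast}\CO_{\CX}$ (equivalently of $c_1(\CK^{\ast})$ pulled back to $\p^1$), obtained by Riemann--Roch on the total space, together with the consistency check $\NL^{\pi}(11)=640$ from counting singular fibers by Euler characteristics. This constant term is invisible to your equations in low degree: a class proportional to the polarization $H$ lies in $V^{\vee}$ only when $11 \mid d$, so the Hodge/tautological term first enters the fiber-class GW/NL relation at $d=11$, where many new unknown coefficients appear simultaneously. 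Neither ``additional proven cases of the conjecture'' nor the theta decomposition of the Borcherds form produces this number; without it your linear system does not pin down $\varphi$.

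A second, more minor omission: Proposition~\ref{prop:GWNLrelation} is stated for families of smooth quasi-polarized holomorphic-symplectic varieties, whereas the generic Debarre--Voisin pencil has $640$ singular fibers. To run the correspondence at all you need the structure of those fibers --- ordinary double points along a degree-$22$ K3 surface, which is exactly the content of Appendix~\ref{appendix:Song} --- plus the double-cover/small-resolution construction and the comparison lemma identifying the fiber-class invariants of $\CX$ with half of those of the resolved family $\widetilde{\CX}$. This is the one genuinely geometric input of the whole argument, and your proposal assumes it silently.
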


Debarre-Voisin varieties are parametrized by a $20$-dimensional projective irreducible GIT quotient
\[ \CM_{\text{DV}} = \p( \wedge^3 V_{10}^{\vee} )// \mathrm{SL}(V_{10}) \]
where $V_{10}$ is a vector space of dimension $10$. 
The period map from this moduli space to the moduli space of holomorphic-symplectic varieties
\[ p : \CM_{\text{DV}} \dashrightarrow \CM_{H} \]
is birational \cite{OGrady} and regular on the open locus corresponding to smooth Debarre-Voisin varieties of dimension $4$.
When passing to the Baily-Borel compactification $\overline{\CM_H}$ this birational map can be resolved.
An \emph{HLS divisor} (for Hassett-Looijenga-Shah) in $\CM_H$ is the image of an exceptional divisor under this resolved map.
These divisors reflect a difference between the GIT and the Baily-Borel compactification
as they parametrize holomorphic-symplectic fourfolds of the same polarization type as a Debarre-Voisin fourfold,
but for which the generic member is not a Debarre-Voisin fourfold.

Let $e$ be a square modulo $11$ and let
\[ \CC_{2e} \subset \CM_{H} \]
be the Noether-Lefschetz divisor of the first type of discriminant $e$,
see Section~\ref{subsec:NL divisors first type} for the precise definition.
Observe that there is a natural gap\footnote{In fact, the gap determines the modular form (viewed as a vector-valued modular form) up to a constant.} in the modular form $\varphi(q)$:
\[ \varphi(q^{11}) =  -10 + \underbrace{ 0 \cdot q^1 + 0 \cdot q^{3} + \ldots + 0 \cdot q^{9} }_{\text{gap}} + 640 q^{11} + 990 q^{12} + 5500 q^{14} + \ldots \]
Translating from Heegner divisors to the irreducible divisors $\CC_{2e}$,
this gap yields the following:

\begin{cor} \label{corHLS}
The divisors $\CC_{2}, \CC_{6}, \CC_{8}, \CC_{10}, \CC_{18}$ are HLS divisors of the moduli space of Debarre-Voisin fourfolds.
The divisor $\CC_{30}$ is not HLS.
\end{cor}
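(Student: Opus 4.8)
The plan is to deduce the statement from the generating series of Theorem~\ref{thm:DV NL series} by translating between the Heegner coefficients indexing the series and the irreducible Noether-Lefschetz divisors $\CC_{2e}$ of the first type. First I would rewrite the series as $\varphi(q^{11}) = \sum_{D} q^{D} \NL^{\pi}(D)$, so that the coefficient of $q^{D}$ is exactly the Noether-Lefschetz number $\NL^{\pi}(D)$. Reading off the gap, one has $\NL^{\pi}(D) = 0$ for $D \in \{1,3,4,5,9\}$, while $\NL^{\pi}(D) \neq 0$ for every larger admissible $D$; in particular $\NL^{\pi}(15) = 11440$. Matching discriminants via $e = D$, the five vanishing values are attached to $\CC_{2}, \CC_{6}, \CC_{8}, \CC_{10}, \CC_{18}$, and the nonvanishing value $D = 15$ is attached to $\CC_{30}$.

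The geometric input is the dictionary between Noether-Lefschetz numbers and the HLS property. For $D > 0$ the number $\NL^{\pi}(D)$ is the intersection of the image curve $p(\p^{1}) \subset \CM_{H}$ with the union of NL divisors of discriminant $D$, counted over all irreducible components with their multiplicities; being the degree of an effective divisor restricted to a moving curve, each such contribution is non-negative. A generic pencil of Debarre-Voisin fourfolds sweeps out a generic curve in the full-dimensional image $\overline{p(\CM_{\text{DV}})} \subset \CM_{H}$, and a generic curve in a variety meets every divisor contained in it. Hence the pencil meets $\CC_{2e}$ if and only if $\CC_{2e} \subset \overline{p(\CM_{\text{DV}})}$, i.e.\ if and only if the generic member of $\CC_{2e}$ is a Debarre-Voisin fourfold. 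In other words, $\CC_{2e}$ is HLS precisely when the generic pencil is disjoint from it.

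Combining the two, the vanishing direction is immediate: for $D \in \{1,3,4,5,9\}$ non-negativity forces every component of discriminant $D$, and in particular $\CC_{2e}$, to be disjoint from the generic pencil, so $\CC_{2}, \CC_{6}, \CC_{8}, \CC_{10}, \CC_{18}$ are HLS. For the final assertion I argue in the opposite direction: since $\NL^{\pi}(15) \neq 0$, the generic pencil meets some component of discriminant $15$. Using the classification of the discriminant-$15$ Noether-Lefschetz divisors for the relevant lattice (Section~\ref{subsec:NL divisors first type}), I would check that $\CC_{30}$ is the only component of this discriminant that can meet a generic Debarre-Voisin pencil, so the pencil meets $\CC_{30}$ and its generic member is a Debarre-Voisin fourfold; therefore $\CC_{30}$ is not HLS.

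The main obstacle is precisely the bookkeeping of this last step, namely keeping track of which irreducible divisors $\CC_{2e}$ and which multiplicities contribute to a single coefficient $\NL^{\pi}(D)$, and ruling out second-type divisors of the same discriminant that could also absorb the intersection. The non-negativity of the individual contributions makes the vanishing direction clean, but confirming that the nonzero coefficient at $D = 15$ is carried by $\CC_{30}$ alone requires the detailed analysis of the NL divisors in Section~\ref{subsec:NL divisors first type}.
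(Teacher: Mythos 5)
Your bookkeeping steps coincide with the paper's own proof: the translation between the coefficients $\NL^{\pi}(D)$ and the intersection numbers with the irreducible divisors $\CC_{2e}$ is exactly Proposition~\ref{prop:NL first and second}, whose upper-triangular form with non-negative multiplicities (leading coefficient $1$ or $2$ on $\CC_{2D}$) turns the gap $\NL^{\pi}(D)=0$, $D\in\{1,3,4,5,9\}$, into the vanishing of the numbers $\CC^{\pi}_{2e}$ for $e\in\{1,3,4,5,9\}$, and whose specialization to the square-free value $D=15$ shows that the nonzero coefficient $11440$ is carried by $\CC_{30}$ alone. That part is fine and is how the paper argues.

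The genuine gap is in your geometric dictionary. You assert that the pencil meets $\CC_{2e}$ if and only if $\CC_{2e}\subset \overline{p(\CM_{\text{DV}})}$, on the grounds that ``a generic curve in a variety meets every divisor contained in it.'' Since the period map $p$ is birational \cite{OGrady}, the closure $\overline{p(\CM_{\text{DV}})}$ is all of $\CM_H$, so your containment criterion is vacuous: every $\CC_{2e}$, including the HLS ones, satisfies it. Worse, the general principle you invoke would then force the generic pencil to meet \emph{every} divisor $\CC_{2e}$, contradicting the very gap you read off from Theorem~\ref{thm:DV NL series}; the whole point of the corollary is that generic pencils miss certain honest divisors of $\CM_H$ (this is precisely what makes them HLS). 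The correct criterion is not containment in the closure of the image, but whether the generic point of $\CC_{2e}$ lies in the image of the locus where $p$ is a morphism — equivalently, whether the pullback $p^{\ast}\CC_{2e}$ is a nonzero divisor class there. This is the content of the paper's Lemma~\ref{lemma:123} ($\CC_{2e}$ is HLS if and only if $\CC^{\pi}_{2e}=0$), and its proof is where the real work sits: one restricts to the open locus $\CM_{DV}'$ of at worst ODP-singular Debarre-Voisin fourfolds, whose complement has codimension $2$ \cite{BS,DV}; one uses Zariski's main theorem and birationality of the resolved map $\tilde{p}$ to get $\tilde{p}^{\ast}\tilde{p}_{\ast}E=E$ for an irreducible divisor $E$, hence ``$D$ is HLS iff $p^{\ast}D=0$ on $\CM_{DV}'$''; and one uses the $\mathrm{SL}(V_{10})$-bundle $U\to\CM_{DV}'$, with $U\subset\p(\wedge^3 V_{10}^{\vee})$ having complement of codimension $\geq 2$, to identify $p^{\ast}D=0$ with the vanishing of the intersection against a generic line. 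Your proposal needs this argument (or an equivalent substitute) for the step linking Noether-Lefschetz numbers to the HLS property; as written, that step rests on a false principle.
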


The statement that $\CC_{2}, \CC_{6}, \CC_{10}, \CC_{18}$ are HLS is the main result of \cite{DHOV}.
The argument here gives an independent and mostly formal proof of the main result of \cite{DHOV}.
%Our argument to show that $\CC_{2}, \CC_{6}, \CC_{10}, \CC_{18}$ are HLS gives an independent and mostly formal proof of the main result of \cite{DHOV}.
The only geometric input lies in describing in understanding the geometry of the singular fibers (a result of J.~Song, see Appendix~\ref{appendix:Song}).
The result that $\CC_{30}$ is not a HLS divisor answers a question of \cite{DHOV}.
The fact that $\CC_{8}$ is HLS seems to be new, and it would be interesting to understand the geometry of these loci, as done in \cite{DHOV} for the other cases.
In principle, Theorem~\ref{thm:DV NL series} can be used to show that the divisors listed in Corollary~\ref{corHLS}
are the only Noether-Lefschetz divisors $\CC_{2e}$ which are HLS. % see Remark~\ref{remark:HLS combinatorics} for how to handle the required algebra.

\begin{rmk}
After the first version of this paper appeared online,
I learned that Theorem~\ref{thm:DV NL series} and Corollary~\ref{corHLS}
was independently obtained by Calla Tschanz based on the results of \cite{DHOV}.
\end{rmk}

\subsection{Convention}
If $\gamma \in H^{i}(X,\BQ)$ is a cohomology class, we write $\deg(\gamma) = i/2$ for the complex cohomological degree of $\gamma$.
%$ and $\deg_{\BC}(\gamma) = i/2$ for the real and complex cohomological degree respectively.
For $X$ holomorphic-symplectic we identify $\Pic(X)$ with its image in $H^2(X,\BZ)$ under the map taking the first Chern class.
Let $\Hilb_n(S)$ be the Hilbert scheme of points of a K3 surface.
Given $\alpha \in H^{\ast}(S,\BQ)$ and $i > 0$ we let
\[ \Fq_i(\alpha) : H^{\ast}(\Hilb_n(S)) \to H^{\ast}(\Hilb_{n+i}(S)) \]
be the $i$-th Nakajima operator \cite{Nak} given by adding a $i$-fat point on a cycle with class $\alpha$;
we use the convention of \cite{OLLV}.
%Nakajima basis for $\Hilb_n(S)$.
We write $A \in H_2(\Hilb_n(S))$ for the class of a generic fiber of the singular locus of the Hilbert Chow morphism,
and we let $-2 \delta$ be the class of the diagonal. We identify
\[ H^2(\Hilb_n(S)) \equiv H^2(S,\BZ) \oplus \BZ \delta, \quad H_2(\Hilb_n(S)) \equiv H_2(S,\BZ) \oplus \BZ A \]
using the Nakajima operators \cite{HilbK3}.

\subsection{Subsequent work}
In \cite{QuasiK3} the main conjecture of this paper (Conjecture~\ref{conjecture:MC_intro})
is proven for all $K3^{[n]}$ in genus $0$ and for $N \leq 3$ markings.

\subsection{Acknowledgements}
The idea to use Noether-Lefschetz theory for the Gromov-Witten theory of $K3^{[2]}$-type varieties
is due to E.~Scheidegger and quite old \cite{STalk}.
I also owe a great debt to the beautiful paper on Noether-Lefschetz theory by D.~Maulik and R.~Pandharipande \cite{MP}.
I further thank T.~Beckmann, J.~Bryan, T.~H.~Buelles, O.~Debarre, E.~Markman, G.~Mongardi, R.~Mboro, and J.~Song for useful comments,
and the referees for a careful reading and helpful remarks.
The author was funded by the Deutsche Forschungsgemeinschaft (DFG) -- OB 512/1-1.

\section{The monodromy in $K3^{[n]}$-type}
\subsection{Overview}
Let $X$ be a (irreducible) holomorphic-symplectic variety.
The lattice $H^2(X,\BZ)$ is equipped with the integral and non-degenerate Beauville-Bogomolov-Fujiki quadratic form.
We will also equip $H^{\ast}(X,\BZ)$ with the usual Poincar\'e pairing.
Both pairings are extended to the $\BC$-valued cohomology groups by linearity.
Let $\Mon(X)$ be the subgroup of $O(H^{\ast}(X,\BZ))$ generated by all monodromy operators,
and let $\Mon^2(X)$ be its image in $O(H^2(X,\BZ))$.

The goal of this section is to describe the monodromy group in the case that $X$ is of $K3^{[n]}$-type,
and we will assume so from now on.
The main references for the sections are Markman's papers \cite{MarkmanSurvey, Markman}.

\subsection{Monodromy}
Let $X$ be of $K3^{[n]}$-type.
By work of Markman \cite[Thm.1.3]{MarkmanSurvey}, \cite[Lemma 2.1]{Markman3} we have that
\begin{equation} \Mon(X) \cong \Mon^2(X) = \widetilde{O}^+(H^2(X,\BZ)) \label{abc2} \end{equation}
where the first isomorphism is the restriction map and
$\widetilde{O}^+(H^2(X,\BZ))$
is the subgroup of $O(H^2(X,\BZ))$ of orientation preserving lattice automorphisms which act by $\pm 1$ on the discriminant.\footnote{Let
$\CC = \{ x \in H^2(X,\BR) | \langle x, x \rangle > 0 \}$ be the positive cone. Then $\CC$ is homotopy equivalent to $S^2$. An automorphism is orientation preserving
if it acts by $+1$ on $H^2(\CC) = \BZ$.}
The first isomorpism implies that any parallel transport operator $H^{\ast}(X_1, \BZ) \to H^{\ast}(X_2, \BZ)$
between two $K3^{[n]}$-type varieties is uniquely determined by its restriction to $H^2(X_1,\BZ)$.

If $g \in \Mon^2(X)$, we let $\tau(g) \in \{ \pm 1 \}$ be the sign by which $g$ acts on the discriminant lattice. %element such that $g$ acts on the discriminant by $(-1)^{\tau(g)}$.
% for some $\tau(g) \in \BZ_2 = \{ 0, 1 \}$.
This defines a character
\[ \tau : \Mon^2(X) \to \BZ_2. \]

\subsection{Zariski closure} \label{subsec:zariski closure}
By \cite[Lemma 4.11]{Markman} if $n \geq 3$ the Zariski closure of $\Mon(X)$ in $O(H^{\ast}(X,\BC))$ is $O(H^2(X,\BC)) \times \BZ_2$.
The inclusion yields the representation
\begin{equation} \rho : O(H^2(X,\BC)) \times \BZ_2 \to O(H^{\ast}(X,\BC)) \label{repn} \end{equation}
which acts by degree-preserving orthogonal ring isomorphism.
There is a natural embedding
\[ \widetilde{O}^+(H^2(X,\BZ)) \to O(H^2(X,\BC)) \times \BZ_2,\, g \mapsto (g, \tau(g)) \]
under which $\rho$ restricts to the monodromy representation.
In case $n \in \{ 1 ,2 \}$ the Zariski closure is $O(H^2(X,\BC))$. In this case, we define 
the representation \eqref{repn} by letting it act through $O(H^2(X,\BC))$.

The representation $\rho$ is determined by the following properties:

\vspace{5pt}
\noindent
\textbf{Property 0.} For any $(g, \tau) \in O(H^2(X,\BC)) \times \BZ_2$ we have 
\[ \rho(g,\tau)|_{H^2(X,\BC)} = g. \]
\textbf{Property 1.} The restriction of $\rho$ to $\SO(H^2(X,\BC)) \times \{ 0 \}$ is the
integrated action of the Looijenga-Lunts-Verbitsky algebra \cite{LL, V}.\\[5pt]
\textbf{Property 2.} We have 
\[ \rho(1, -1) = D \circ \rho(-\id_{H^2(X,\BC)}, 1), \]
where
$D$ acts on $H^{2i}(X,\BC)$ by multiplication by $(-1)^i$.\\[5pt]
\textbf{Property 3.} Assume that $X = \Hilb_n(S)$ and identify $H^2(X,\BZ)$ with $H^2(S,\BZ) \oplus \BZ \delta$.
Then the restriction of $\rho$ to $O(H^2(X,\BC))_{\delta} \times 1$
(identified naturally with $O(H^2(S,\BC))$)
is the Zariski closure of the induced action on the Hilbert scheme by the monodromy representation of $S$.
%\[ O(H^2(S,\BC)) \to O(H^{\ast}(S,\BC)). \]

In particular, the action is equivariant with respect to the Nakajima operators:
For $g \in O(H^2(X,\BC))_{\delta}$ let $\tilde{g} = g|_{H^2(S,\BC))} \oplus \id_{H^0(S,\BZ) \oplus H^4(S,\BZ)}$. Then
\[ \rho(g, 1) \left( \prod_i \Fq_{k_i}(\alpha_i) 1 \right) = \prod_i \Fq_{k_i}( \tilde{g} \alpha_i ) 1. \]

\vspace{5pt}
\noindent \textbf{Property 4.} Let $P_{\psi} : H^{\ast}(X_1, \BZ) \to H^{\ast}(X_2, \BZ)$ be a parallel transport operator with $\psi = P_{\psi}|_{H^2(X_1,\BZ)}$.
Then
\[ P_{\psi}^{-1} \circ \rho(g, \tau) \circ P_{\psi} = \rho( \psi^{-1} g \psi, \tau). \]

Property 1 follows by \cite[Lemma 4.13]{Markman}.
%and can also be seen directly by \cite[Prop.4.3 and 4.4]{NOY}.
The other properties also follow from the results of \cite{Markman}.
Properties 1-3 determine the action $\rho$ completely
in the Hilbert scheme case. Moreover by \cite{OLLV}
this description is explicit in the Nakajima basis.
%See also \cite{OLLV} for a presentation of the LLV algebra in the Nakajima basis.
The last condition extends this presentation then to arbitrary $X$.
The parallel transport operator between different moduli spaces of stable sheaves can also be described more explicitly \cite{Markman}.

\subsection{Parallel transport}
Let 
\[ \Lambda = E_8(-1)^{\oplus 2} \oplus U^4 \]
be the Mukai lattice.
For $n \geq 2$, any holomorphic-symplectic variety of $K3[n]$-type is
equipped with a canonical choice of a primitive embedding
\[ \iota_X : H^2(X,\BZ) \to \Lambda \]
unique up to composition by an element by $O(\Lambda)$, see \cite[Cor.9.4]{MarkmanSurvey}.

\begin{thm}(\cite[Thm.9.8]{MarkmanSurvey}) \label{thm:pt operators}
%Let $X_1, X_2$ be projective of $K3^{[n]}$ type.
An isometry $\psi : H^2(X_1, \BZ) \to H^2(X_1, \BZ)$
is the restriction of a parallel transport operator if and only of if it is orientation preserving
and there exists an $\eta \in O(\Lambda)$ such that
\[ \eta \circ \iota_{X_1} = \iota_{X_2} \circ \psi, \]
\end{thm}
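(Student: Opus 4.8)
The plan is to reduce the statement to the computation of the monodromy group in \eqref{abc2} together with a lattice-theoretic extension criterion of Nikulin type. Since $X_1$ and $X_2$ are both of $K3^{[n]}$-type they are deformation equivalent, so I would first fix an auxiliary parallel transport operator $P_0 : H^{\ast}(X_1,\BZ) \to H^{\ast}(X_2,\BZ)$ and set $\psi_0 = P_0|_{H^2(X_1,\BZ)}$. The set of restrictions of parallel transport operators $X_1 \to X_2$ is then the single coset $\psi_0 \cdot \Mon^2(X_1)$, so an isometry $\psi$ is the restriction of a parallel transport operator if and only if $\psi_0^{-1}\circ\psi \in \Mon^2(X_1)$. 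By \eqref{abc2} this group equals $\widetilde{O}^+(H^2(X_1,\BZ))$, so the task becomes to show that the two conditions in the theorem are equivalent to $\psi_0^{-1}\circ\psi$ being orientation preserving and acting by $\pm 1$ on the discriminant. As $P_0$ is itself orientation preserving and extends to $\Lambda$, the orientation condition transfers immediately (orientation-preservation being a homomorphism to $\{\pm 1\}$), and everything reduces to the discriminant side for a single variety.

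The heart of the matter is the following statement for a fixed $X$ with $n \geq 2$: an isometry $g \in O(H^2(X,\BZ))$ extends to some $\eta \in O(\Lambda)$ with $\eta \circ \iota_X = \iota_X \circ g$ if and only if $g$ acts by $\pm 1$ on the discriminant group $H^2(X,\BZ)^{\vee}/H^2(X,\BZ) \cong \BZ/(2n-2)$. Here I would use that $\Lambda$ is unimodular and that the orthogonal complement of $\iota_X(H^2(X,\BZ))$ is a rank-one lattice $\langle v\rangle$ with $v^2 = 2n-2$. By Nikulin's gluing theory the primitive embedding $\iota_X$ is encoded by an anti-isometry $\gamma$ between the cyclic discriminant forms of $H^2(X,\BZ)$ and of $\langle v\rangle$, and $g$ lifts to $\Lambda$ precisely when its induced action on the discriminant is intertwined by $\gamma$ with that of some isometry of $\langle v\rangle$. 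Since $O(\langle v\rangle) = \{\pm 1\}$ acts by $\pm 1$ on the discriminant of $\langle v\rangle$, this forces $g$ to act by $\pm 1$; conversely each sign is realized by the matching $\pm 1$ on $\langle v\rangle$, which glues with $g$ to an element of $O(\Lambda)$ by unimodularity.

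With this equivalence I would close both directions using the naturality of the canonical embedding, namely an $\eta_0 \in O(\Lambda)$ witnessing $\eta_0 \iota_{X_1} = \iota_{X_2}\psi_0$, which exists because the $O(\Lambda)$-orbit of $\iota_X$ is a deformation invariant by \cite[Cor.9.4]{MarkmanSurvey}. For necessity, $\psi_0^{-1}\psi \in \widetilde{O}^+$ gives orientation-preservation and, via the lattice equivalence applied to $g = \psi_0^{-1}\psi$, an $\eta'$ with $\eta'\iota_{X_1} = \iota_{X_1} g$; then $\iota_{X_2}\psi = \iota_{X_2}\psi_0 g = \eta_0\iota_{X_1} g = \eta_0\eta'\iota_{X_1}$, so $\eta = \eta_0\eta'$ works. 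For sufficiency one runs this backwards: the existence of $\eta$ gives $\eta_0^{-1}\eta\,\iota_{X_1} = \iota_{X_1} g$, hence $g$ acts by $\pm 1$ on the discriminant, and the assumed orientation-preservation places $g$ in $\widetilde{O}^+ = \Mon^2(X_1)$, so $\psi = \psi_0 g$ is parallel transport.

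The main obstacle is the naturality input and the decoupling of signs. One must genuinely know that the canonical orbit of embeddings is carried by parallel transport from $\iota_{X_1}$ into the orbit of $\iota_{X_2}$, and that the orientation sign and the discriminant sign are cut out by two \emph{independent} conditions rather than by a single constraint relating them. Verifying that the ``$\pm 1$ on the discriminant'' condition matches liftability to the unimodular lattice $\Lambda$ uniformly in $n$, and that this matching is compatible with the chosen reference operator $P_0$, is the delicate part; the computation of $\Mon^2(X)$ itself, which would otherwise be the crux, is already available to us through \eqref{abc2}.
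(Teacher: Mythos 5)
Note first that the paper offers no proof of this statement: it is quoted directly from Markman's survey (Thm.~9.8), so there is no in-paper argument to compare against; your proposal is best read as a reconstruction of the argument behind the citation, and as such it is essentially correct. The coset reduction is sound: the set of restrictions of parallel transport operators $X_1 \to X_2$ is exactly $\psi_0 \cdot \Mon^2(X_1)$ for any fixed reference operator $P_0$, and together with \eqref{abc2} this converts the theorem into a statement about a single variety, namely that an isometry $g$ of $H^2(X,\BZ)$ admits $\eta \in O(\Lambda)$ with $\eta \circ \iota_X = \iota_X \circ g$ if and only if $g$ acts by $\pm 1$ on the discriminant. Your Nikulin-gluing argument for that equivalence is also correct: any such $\eta$ preserves $\iota_X(H^2(X,\BZ))$, hence preserves the rank-one complement $\langle v \rangle$ with $v^2 = 2n-2$, on which it can only act by $\pm 1$, and unimodularity of $\Lambda$ identifies extendability with compatibility of the discriminant actions under the gluing anti-isometry; conversely each sign glues, so the orientation condition and the discriminant condition are indeed independent, matching $\widetilde{O}^+ = \widetilde{O} \cap O^+$. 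The one input deserving emphasis is the one you flag yourself: the existence of $\eta_0$ with $\eta_0 \circ \iota_{X_1} = \iota_{X_2} \circ \psi_0$ does not follow from the statement quoted in the paper before the theorem (uniqueness of $\iota_X$ up to $O(\Lambda)$ for a \emph{fixed} $X$); what you need is that the $O(\Lambda)$-orbit of $\iota_X$ is a parallel-transport invariant, which is precisely the content of Markman's construction of the canonical embedding. So your argument should be understood as a correct reduction of the theorem to that naturality statement plus \eqref{abc2}, rather than a proof from scratch; there is no circularity, since \eqref{abc2} is established by Markman independently of Thm.~9.8.
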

Orientation preserving is here defined with respect to the canonical choice of orientation of the positive cone of $X_i$
given by the real and imaginary part of the symplectic form and a K\"ahler class.
If $X_1 = X_2$ the theorem reduces to the second isomorphism in \eqref{abc2}.

\subsection{Curve classes} \label{subsec:curve classes}
By Eichler's criterion \cite[Lemma 3.5]{GHS}, Theorem~\ref{thm:pt operators} yields a complete set of deformation invariants of curve classes in $K3^{[n]}$-type.
To state the result we need the following constriction:

The orthogonal complement
\[ L = \iota_X(H^2(X,\BZ))^{\perp} \subset \Lambda \]
is isomorphic to the lattice $\BZ$ with intersection form $(2n-2)$.
Let $v \in L$ be a generator and consider the isomorphism of abelian groups
\begin{equation} L^{\vee} / L \xrightarrow{\cong} \BZ / (2n-2)\BZ  \label{sdffd} \end{equation}
determined by sending $v/(2n-2)$ to the residue class of $1$.
Since the generators of $L$ are $\pm v$, \eqref{sdffd} is canonical up to multiplication by $1$.

Since $\Lambda$ is unimodular there exists a natural isomorphism (\cite[Sec.14]{HuybrechtsLecturesOnK3})
\[ H^2(X,\BZ)^{\vee} / H^2(X,\BZ) \xrightarrow{\cong} L^{\vee}/L. \]
If we use Poincar\'e duality to identify $H_2(X,\BZ)$ with $H^2(X,\BZ)^{\vee}$
this yields the \emph{residue map}
\[ r_X:H_2(X,\BZ)/H^2(X,\BZ) \xrightarrow{\cong} L^{\vee}/L \xrightarrow{\cong} \BZ/(2n-2) \BZ. \]
The map depends on the choice of the generator $v$ and hence is unique up to multiplication by $\pm 1$.

\begin{defn}
The \emph{residue set} of a class $\beta \in H_2(X,\BZ)$ is defined by
\begin{equation*} \label{res_set} \pm [\beta] = \{ \pm r_X([\beta]) \} \subset \BZ / (2n-2) \BZ \end{equation*}
if $n \geq 2$, and by $\pm [\beta]=0$ otherwise.
\end{defn}

Note that since $r_X$ is canonical up to sign, the residue set is independent of the choice of map $r_X$. 

\begin{rmk}
(i) Since parallel transport operators respect the embedding $i_X$ up to composing with an isomorphism of $\Lambda$,
the residue set $[\beta]$ is preserved under deformation. (This is also reflected in the fact, that the monodromy acts by $\pm 1$ on the discriminant.)\\
(ii) In the case of the Hilbert scheme $X = \Hilb_n(S)$ of a K3 surface, let $A \in H_2(X)$ be the class of an exceptional curve,
that is the class of a fiber of the Hilbert-Chow morphism $\Hilb_n(S) \to \mathrm{Sym}^n(S)$ over a generic point in the singular locus.
We have a natural identification
\[ H_2(X,\BZ) = H_2(S,\BZ) \oplus \BZ A. \]
The morphism $r_X$ then sends (up to sign) the class $[A]$ to $1 \in \BZ_{2n-2}$. \\
(iii) In other words, we could have defined the residue class also by first deforming to the Hilbert scheme, and then taking the coefficient of $A$ modulo $2n-2$.
This is usually the practical way to compute the residue class. \qed
\end{rmk}

Let $\beta \in H_2(X,\BZ)$ be a class.
The class $\beta$ has then the following deformation invariants:
\begin{enumerate}
\item[(i)] the divisibility $\div(\beta)$ in $H_2(X,\BZ)$,
\item[(ii)] the Beauville--Bogomolov norm $(\beta, \beta) \in \BQ$, and
\item[(iii)] the residue set $\pm \left[ \frac{\beta}{\div(\beta)} \right] \in \BZ / (2n-2) \BZ$.
\end{enumerate}
The global Torelli theorem and Eichler's criterion yields the following:
\begin{cor} \label{cor:curve deformations invariance}
Two pairs $(X,\beta)$ and $(X',\beta')$ of a $K3^{[n]}$-type variety and a class in $H_2(X,\BZ)$ which pairs positively with a K\"ahler class
are deformation equivalent if and only if the invariants (i-iii) agree.
\end{cor}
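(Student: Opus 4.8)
The plan is to combine the global Torelli theorem (which converts deformation equivalence of pairs into the existence of a parallel transport operator matching the classes) with Eichler's criterion (which decides the latter purely in terms of the lattice invariants (i)--(iii)).

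\textbf{Forward direction.} If $(X,\beta)$ and $(X',\beta')$ are deformation equivalent, parallel transport along the connecting family gives an operator $P : H^2(X,\BZ) \to H^2(X',\BZ)$ whose induced map on $H_2$ carries $\beta$ to $\beta'$. Since $P$ is an isometry it preserves the divisibility (i) and the Beauville--Bogomolov norm (ii). By Theorem~\ref{thm:pt operators} there is $\eta \in O(\Lambda)$ with $\eta\circ\iota_X = \iota_{X'}\circ P$, so $P$ is compatible with the identifications $H^2(\cdot,\BZ)^\vee/H^2(\cdot,\BZ) \cong L^\vee/L \cong \BZ/(2n-2)\BZ$ up to the $\pm$ ambiguity in the choice of generator $v$; hence it preserves the residue set (iii). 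This is Remark~(i).

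\textbf{Reduction of the converse.} Assume (i)--(iii) agree. By the global Torelli theorem and the surjectivity of the period map for $K3^{[n]}$-type varieties, two such varieties are deformation equivalent compatibly with their marked curve classes exactly when there is an orientation-preserving parallel transport operator matching the classes. By \eqref{abc2} and Theorem~\ref{thm:pt operators}, after fixing an isometry of the abstract period lattices this amounts to producing $g\in\widetilde{O}^{+}(H^2(X,\BZ))$ — orientation preserving and acting by $\pm1$ on the discriminant — with $g(\beta)=\beta'$. Here I use the Poincar\'e duality identification $H_2=(H^2)^\vee$, on which $O(H^2)$ acts.

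\textbf{Lattice step.} Write $\beta=m\alpha$ and $\beta'=m'\alpha'$ with $\alpha,\alpha'$ primitive. Invariant (i) gives $m=m'$, then (ii) gives $(\alpha,\alpha)=(\alpha',\alpha')$ and (iii) gives $\pm[\alpha]=\pm[\alpha']$ in $\BZ/(2n-2)\BZ$; it therefore suffices to match $\alpha$ with $\alpha'$, since any isometry sending $\alpha\mapsto\alpha'$ sends $m\alpha\mapsto m\alpha'=\beta'$. As $H^2(X,\BZ)\cong U^{\oplus 3}\oplus E_8(-1)^{\oplus 2}\oplus\langle 2-2n\rangle$ contains $U^{\oplus 2}$, Eichler's criterion \cite[Lemma 3.5]{GHS} applies: the orbit of a primitive class under the stable orthogonal group (acting trivially on the discriminant) is determined by its norm and its class in $A_{H^2}\cong\BZ/(2n-2)\BZ$. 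If $[\alpha']=[\alpha]$ this gives the desired matching directly; if $[\alpha']=-[\alpha]$ I first compose with an element of $\widetilde{O}^{+}(H^2(X,\BZ))$ acting by $-1$ on the discriminant — for instance the composite of $-\id$ with a reflection in a positive-norm vector, which is orientation preserving and reverses the sign of the residue — reducing to the previous case. Finally, if the resulting isometry is orientation reversing, I post-compose with an orientation-reversing stable isometry fixing $\alpha'$ (available since $(\alpha')^{\perp}$ retains a positive direction), which repairs the orientation without changing the residue.

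\textbf{Main obstacle.} The lattice bookkeeping is routine; the genuine content is the reduction step, where the abstract isometry produced by Eichler's criterion must be promoted to an honest \emph{geometric} deformation of pairs. This is precisely where the hypothesis that $\beta$ and $\beta'$ pair positively with a K\"ahler class is needed: it ensures the period path joining the two varieties can be chosen so that the marked curve class stays of the correct (K\"ahler-positive) type throughout, so that global Torelli yields a deformation of the pairs rather than merely of the underlying varieties. The other delicate point is the mismatch between the group in Eichler's criterion (trivial on the discriminant) and the monodromy group $\widetilde{O}^{+}$ (acting by $\pm1$), which is exactly what forces invariant (iii) to be a residue \emph{set} rather than a single residue.
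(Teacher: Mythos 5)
Your proposal is correct and follows essentially the same route as the paper: the paper's own (one-line) proof is exactly the combination of Markman's characterization of parallel transport operators (Theorem~\ref{thm:pt operators}, together with \eqref{abc2}) and Eichler's criterion, with the $\pm$ ambiguity in the residue absorbed by monodromies acting by $-1$ on the discriminant — precisely your sign-flip step.

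One correction and two small points, since your closing paragraph misdiagnoses the logical structure. Deformation equivalence of the \emph{pairs} only requires that parallel transport along some path carry $\beta$ to $\beta'$; it does not require the class to stay K\"ahler-positive (or even of Hodge type) along the path — that refinement is the separate Remark~\ref{rmkHodge}. The ``promotion'' of the abstract isometry to a geometric deformation is therefore not an extra step waiting to be supplied by positivity: it is exactly what Theorem~\ref{thm:pt operators} gives, and you already invoked it in your reduction, so your proof is complete without the mechanism you describe. The two technical elisions: (1) Eichler's criterion as cited concerns primitive vectors of the lattice $H^2(X,\BZ)$ itself, whereas $\alpha,\alpha'$ live in $H_2=(H^2)^{\vee}$; one needs the standard equivariant dictionary $\alpha \mapsto \lambda = \div(\lambda)\,\alpha$ between primitive vectors of the dual and primitive vectors of the lattice (with the residue class determining the scaling factor) before applying it. (2) Your final orientation repair needs an \emph{integral} reflection, e.g.\ in a square-$2$ vector orthogonal to $\alpha'$, not merely ``a positive direction'' in $(\alpha')^{\perp}\otimes\BR$; this exists after putting $\alpha'$ in Eichler standard form (so that $(\alpha')^{\perp}$ contains a hyperbolic plane), or can be avoided altogether by quoting the strong form of Eichler's criterion, which produces the matching isometry inside the group generated by Eichler transvections — already orientation-preserving and trivial on the discriminant.
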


\begin{rmk} \label{rmkHodge}
By the global Torelli theorem, if $\beta$ and $\beta'$ are both of Hodge type,
the deformation in the corollary can be choosen such that the curve class stays of Hodge type.
\end{rmk}

\subsection{Lifts of isometries of $H^2$} \label{Subsec:Parallel Transport lifts}
Let $X_1, X_2$ be of $K3^{[n]}$-type,
and let
\[ g : H^2(X_1,\BC) \to H^2(X_2,\BC) \]
be an isometry.
An operator $\widetilde{g} : H^{\ast}(X_1,\BC) \to H^{\ast}(X_2,\BC)$
is a \emph{parallel transport lift} of $g$ if it is of the form
\[ \widetilde{g} = \rho(g \circ \psi^{-1}, 1) \circ P_{\psi} \]
for a parallel transport operator
$P_{\psi} : H^{\ast}(X_1, \BZ) \to H^{\ast}(X_2,\BZ)$ with restriction $\psi = P|_{H^2(X_1,\BZ)}$.
In particular, any parallel transport lift is a degree-preserving orthogonal ring isomorphism.

Recall from Section~\ref{subsec:zariski closure}, Property 2, the operator
\[ \widetilde{D} = \rho(\id, -1) = D \circ \rho(-\id_{H^2(X_2,\BZ)}, 1). \]

\begin{lemma} A parallel transport lift of $g$ is unique up to composition by $\widetilde{D}$.
\end{lemma}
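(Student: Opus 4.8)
The plan is to exploit the fact that the only freedom in a parallel transport lift is the choice of the underlying operator $P_\psi$, and to track how the lift changes as this choice varies. Since $X_1$ and $X_2$ are of the same deformation type, at least one parallel transport operator $H^{\ast}(X_1,\BZ) \to H^{\ast}(X_2,\BZ)$ exists, and by the first isomorphism in \eqref{abc2} any two of them differ by precomposition with an element of $\Mon(X_1)$. So given two lifts $\widetilde{g}_\psi$ and $\widetilde{g}_{\psi'}$, coming from $P_\psi$ and $P_{\psi'}$, I would write $P_{\psi'} = P_\psi \circ m$ for $m = P_\psi^{-1} \circ P_{\psi'} \in \Mon(X_1)$, set $\mu = m|_{H^2(X_1,\BZ)}$ so that $\psi' = \psi \circ \mu$, and recall from Section~\ref{subsec:zariski closure} that under the identification $g \mapsto (g,\tau(g))$ the monodromy representation is $\rho$; hence $m = \rho(\mu, \tau(\mu))$ as an operator on $H^{\ast}(X_1,\BC)$.

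The computation then proceeds formally. Substituting into the definition of the second lift gives
\[ \widetilde{g}_{\psi'} = \rho\big(g \circ (\psi \mu)^{-1}, 1\big) \circ P_\psi \circ \rho(\mu, \tau(\mu)). \]
The conjugation formula of Property~4 lets me move $\rho(\mu,\tau(\mu))$, which acts on $H^{\ast}(X_1,\BC)$, across $P_\psi$ to obtain $\rho(\psi \mu \psi^{-1}, \tau(\mu))$ acting on $H^{\ast}(X_2,\BC)$. Both $\rho$-factors now live on $X_2$, so the homomorphism property of $\rho$ collapses their product: since $(g\mu^{-1}\psi^{-1})(\psi\mu\psi^{-1}) = g\psi^{-1}$ in $O(H^2(X_2,\BC))$ and $1 \cdot \tau(\mu) = \tau(\mu)$, this yields
\[ \widetilde{g}_{\psi'} = \rho(g\psi^{-1}, \tau(\mu)) \circ P_\psi. \]

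Comparing with $\widetilde{g}_\psi = \rho(g\psi^{-1}, 1)\circ P_\psi$ then settles the claim: if $\tau(\mu) = 1$ the two lifts coincide, while if $\tau(\mu) = -1$ the homomorphism property gives $\rho(g\psi^{-1}, -1) = \rho(\id, -1)\circ \rho(g\psi^{-1},1) = \widetilde{D}\circ \rho(g\psi^{-1},1)$, so that $\widetilde{g}_{\psi'} = \widetilde{D}\circ \widetilde{g}_\psi$. Thus any two lifts differ by at most a factor of $\widetilde{D}$. The only point requiring care is the bookkeeping of which cohomology space each operator acts on — in particular applying Property~4 with the conjugate $\psi\mu\psi^{-1}$ so that the two $\rho$-factors end up on $X_2$ — after which the cancellation of the cross terms and the sole surviving role of $\tau(\mu)$ are automatic. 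I note that for $n \in \{1,2\}$, where $\rho$ factors through $O(H^2(X,\BC))$ and $\widetilde{D} = \rho(\id,-1) = \id$, the same computation shows the lift is in fact unique, consistently with the statement.
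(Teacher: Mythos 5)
Your proof is correct and takes essentially the same route as the paper's: both reduce the claim to the fact that the two parallel transport operators differ by a monodromy operator, identify that operator as $\rho(\mu,\tau(\mu))$ via the embedding $\mu \mapsto (\mu,\tau(\mu))$, and then use multiplicativity of $\rho$ so that everything cancels except the sign $\tau(\mu)$, yielding the identity or $\widetilde{D}$. The only cosmetic difference is bookkeeping: you factor $P_{\psi'} = P_{\psi}\circ m$ with $m \in \Mon(X_1)$ and move $m$ across $P_\psi$ using Property~4, whereas the paper works with $P_{\psi_1}\circ P_{\psi_2}^{-1} \in \Mon(X_2)$ directly (no conjugation needed) and instead invokes Property~2 together with the commutation of $D$ with the degree-preserving operator $\rho(g\psi_1^{-1},1)$.
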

\begin{proof}
Consider two parallel transport lifts of $g$,
\[ \widetilde{g_i} =  \gamma(g \circ \psi_i^{-1}) \circ P_{\psi_i}, \quad i=1,2 \]
for parallel transport operators $P_{\psi_1}, P_{\psi_2}$.
We will show that
\[ \widetilde{g}_1 = \widetilde{g}_2 \quad \text{or} \quad \widetilde{g}_1 = \widetilde{D} \circ \widetilde{g}_2. \]

%Consider the monodromy $P = P_{\psi_1} \circ P_{\psi_2}^{-1}$. 
Let $\gamma(h) = \rho(h, 1)$. If $\tau(\psi_1 \circ \psi_2^{-1}) = 1$ then %$P = \gamma(\psi_1 \psi_2^{-1})$ so that
\[
\gamma(g \circ \psi_1^{-1}) \circ P_{\psi_1} \circ P_{\psi_2}^{-1} = \gamma(g \psi_1^{-1}) \circ \gamma(\psi_1 \circ \psi_2^{-1}) = \gamma( g \psi_2^{-1}).
\]
If $\tau(\psi_1 \circ \psi_2^{-1}) = -1$ then % $P = D \circ \rho(-\psi_1 \psi_2^{-1})$ and hence
\begin{multline*}
\gamma(g \circ \psi_1^{-1}) \circ P_{\psi_1} \circ P_{\psi_2}^{-1} = \gamma(g \psi_1^{-1}) \circ D \circ \gamma(-\psi_1 \circ \psi_2^{-1})
% = D \circ \gamma( -g \psi_2^{-1}) 
= \widetilde{D} \circ  \gamma( g \psi_2^{-1})
\end{multline*}
\end{proof}

\section{The multiple cover conjecture} \label{Multiple cover rule}
\subsection{Overview}
Let $X$ be a variety of $K3^{[n]}$-type and let
$\beta \in H_2(X,\BZ)$ be an effective curve class.
The moduli space $\Mbar_{g,N}(X,\beta)$ of $N$-marked genus $g$ stable maps to $X$ in class $\beta$ carries a reduced virtual fundamental class
$[ \Mbar_{g,N}(X,\beta) ]^{\text{red}}$
%\[ [ \Mbar_{g,N}(X,\beta) ]^{\text{red}} \in A_{\mathsf{vd}}( \Mbar_{g,N}(X,\beta) ) \]
of dimension $(2n-3)(1-g)+N+1$.
Gromov-Witten invariants of $X$ are defined by pairing with this class:
\begin{equation}
\label{GWinvts}
\blangle \alpha ; \gamma_1, \ldots, \gamma_n \brangle^X_{g,\beta} := \int_{ [ \Mbar_{g,n}(X,\beta) ]^{\text{red}} } \pi^{\ast}(\alpha) \cup \prod_i \ev_i^{\ast}(\gamma_i),
\end{equation}
where $\ev_i : \Mbar_{g,n}(X,\beta) \to X$ are the evaluation maps, $\tau : \Mbar_{g,n}(X,\beta) \to \Mbar_{g,n}$ is the forgetful map,
and $\alpha \in H^{\ast}(\Mbar_{g,n})$ is a \emph{tautological class} \cite{FP}.
%$\alpha \in H^{\ast}(\Mbar_{g,n})$ is a tautological class and $\gamma_i \in H^{\ast}(X)$.

In this section we state a conjecture to express the invariants \eqref{GWinvts} for $\beta$ an arbitrary class
in terms of invariants where $\beta$ is primitive.

\subsection{Invariance} \label{subsec:invariance}
Let $\beta \in H_2(X,\BZ)$ be an effective curve class,
and let 
\[ \widetilde{O}^{+}(H^2(X,\BZ))_{\beta} \subset \widetilde{O}^+(H^2(X,\BZ)) \]
be the subgroup fixing $\beta$ (either via the monodromy representation or equivalently,
via the dual action on $H^2(X,\BZ)^{\vee} \cong H_2(X,\BZ)$ under the Beauville-Bogomolov form).
Applying Remark~\ref{rmkHodge} 
and the deformation invariance of the reduced Gromov-Witten invariants, we find that
\[
\Blangle \alpha\, ;\, \gamma_1, \ldots, \gamma_n \Brangle^X_{g,\beta}
=
\Blangle \alpha\, ;\, \mu(\varphi) \gamma_1,\ \ldots,\ \mu(\varphi) \gamma_n \Brangle^X_{g,\beta}
\]
for all $\varphi \in \widetilde{O}^{+}(H^2(X,\BZ))_{\beta}$, where we have used
\[ \mu : \widetilde{O}^{+}(H^2(X,\BZ)) \to O(H^{\ast}(X,\BZ)) \]
to denote the monodromy representation (defined by the isomorphism \eqref{abc2}
composed with the inclusion $\Mon(X) \subset O(H^{\ast}(X,\BZ))$).

The image of $\widetilde{O}^{+}(H^2(X,\BZ))_{\beta}$ is Zariski dense in
\begin{align*}
G_{\beta} 
& = (O(H^2(X,\BC)) \times \BZ/2)_{\beta} \\
& := \{ g \in O(H^2(X,\BC)) \times \BZ/2 \BZ\ |\ \rho(g) \beta = \beta \}.
\end{align*} 
It follows that for all $g \in G_{\beta}$ we have
\begin{equation}
\label{invariance}
\Blangle \alpha\, ;\, \gamma_1, \ldots, \gamma_n \Brangle^X_{g,\beta}
=
\Blangle \alpha\, ;\, \rho(g) \gamma_1,\ \ldots,\ \rho(g) \gamma_n \Brangle^X_{g,\beta}.
\end{equation}
Equivalently, the pushforward of the reduced virtual class lies in the invariant part
of the diagonal $G_{\beta}$ action:
\begin{equation*} \ev_{\ast} \left( \tau^{\ast}(\alpha) \cap [\Mbar_{g,n}(X,\beta)]^{\text{red}} \right) \in H^{\ast}(X^n, \BQ)^{G_{\beta}}. \label{invariance} \end{equation*}

The representation $\rho$ restricted to $\{ \id \} \times \BZ_2$ acts trivially on $H^2(X,\BC)$.
Hence for $X$ a K3 surface, we obtain the invariance of the Gromov-Witten class 
under the group $O(H^2(X,\BC))_{\beta}$.
This matches what was conjectured in \cite[Conj.C1]{K3xE} and then proven along the above lines in \cite{Bue1}.

\subsection{Multiple-cover conjecture} \label{subsec:mc conjecture}
The main conjecture is the following: Let $\beta \in H_2(X,\BZ)$ be any effective curve class.
For every divisor $k | \beta$,
let $X_k$ be a variety of $K3^{[n]}$ type and let
\[ \varphi_k : H^{2}(X,\BR) \to H^{2}(X_k,\BR) \]
be a real isometry such that:
\begin{itemize}
\item $\varphi_k(\beta/k)$ is a primitive curve class
\item $\pm \left[ \varphi_k(\beta/k) \right] = \pm [\beta/k]$.
\end{itemize}
We extend $\varphi_k$ as a \emph{parallel transport lift} (Section~\ref{Subsec:Parallel Transport lifts}) to the full cohomology:
\[ \varphi_k : H^{\ast}(X, \BR) \to H^{\ast}(X_k, \BR), \]
By Section~\ref{Subsec:Ex:Hilb Elliptic Curves} below, pairs $(X_k, \varphi_k)$ satisfying these properties can always be found.

\begin{conjecture} \label{conjecture:MC}
For any effective curve class $\beta \in H_2(X,\BZ)$ we have
\begin{multline*}
\Big\langle \alpha ; \gamma_1, \ldots, \gamma_N \Big\rangle^X_{g,\beta} \\
=
\sum_{k | \beta} 
k^{3g-3+N - \deg(\alpha)}
(-1)^{[\beta] + [\beta/k]}
\Big\langle \alpha ; \varphi_k(\gamma_1), \ldots, \varphi_k(\gamma_N) \Big\rangle^{X_k}_{g,\varphi_k(\beta/k)}.
\end{multline*}
\end{conjecture}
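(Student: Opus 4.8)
The statement is conjectural, so the realistic aim is to reduce it to a clean structural input and to prove it in as many cases as possible. I would first pass to the equivalent formulation of Conjecture~\ref{conjecture:MC_intro}, in which the assertion reads $\mathsf{c}_{g,N,\beta}(\alpha) = \varphi^{-1}\!\left(\CC_{g,N,\varphi(\beta)}(\alpha)\right)$. By Corollary~\ref{cor:curve deformations invariance} together with Remark~\ref{rmkHodge}, every invariant appearing depends only on the deformation data $(\div(\beta),(\beta,\beta),\pm[\beta/\div(\beta)])$, so I am free to realize $X$ and $\beta$ on whatever convenient geometry carries these invariants. Moreover the $G_\beta$-equivariance \eqref{invariance} forces the pushed-forward reduced class to lie in the small $G_\beta$-invariant part of $H^\ast(X^N)$; decomposing that invariant space under the representation $\rho$ of Section~\ref{subsec:zariski closure} cuts the identity down to finitely many scalar generating series in each fixed bidegree.

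The engine I would then deploy is the Maulik--Pandharipande Gromov--Witten/Noether--Lefschetz relation for a one-parameter family $\pi:\CX\to\p^1$ of quasi-polarized $K3^{[n]}$-type varieties, as set up in Section~\ref{sec:GWNL theory}. For a family whose total space is the zero locus of a homogeneous bundle on a GIT quotient---the pencils of Fano varieties of cubic fourfolds and of Debarre--Voisin varieties being the accessible cases---the relation intertwines the Noether--Lefschetz numbers $\NL^\pi$, the Gromov--Witten invariants of $\CX$ in fiber classes, and the reduced invariants of a smooth fiber. The invariants of $\CX$ are computed by mirror symmetry via \cite{CFK}, while the Noether--Lefschetz series is pinned down by its modularity \cite{Bor2, McGraw}. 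Solving the relation thus expresses a weighted sum, over all fiber classes $\beta'$ of fixed Beauville--Bogomolov square and varying divisibility, of reduced invariants in terms of known data. Substituting the conjectural multiple cover formula on the reduced side and the known primitive values (the Katz--Klemm--Vafa formula \cite{KKV} in the base cases) collapses both sides to an explicit identity of numbers or modular forms, which one then verifies.

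The principal obstacle is that each admissible family yields only finitely many numerical constraints, each a sum over infinitely many classes, so the relation by itself cannot prove the formula for all squares, divisibilities, and residues---it can only \emph{test} it, as is carried out here up to Pl\"ucker degree $38$ for the Fano pencil. Upgrading verification to proof requires an additional structural principle, for instance a modularity of the full generating series in the square $s$ that lets low-degree data propagate, together with---on the practical side---families whose singular fibers are mild enough to control, which are genuinely scarce (the Iliev--Manivel pencil fails exactly here). The hardest regime is that of simultaneously large divisibility and large self-intersection, precisely where no family currently reaches.

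A complementary route for the genus-$0$ theory with few markings bypasses families altogether and exploits the monodromy equivariance of Property~3 in Section~\ref{subsec:zariski closure}. Writing the insertions through the Nakajima operators $\Fq_{k_i}(\alpha_i)$ and using that $\rho$ intertwines them with the induced K3 surface action, one attempts to transport the multiple cover statement from the K3 surface case of \cite{K3xE} (known in low divisibility) to $K3^{[n]}$-type. This is the philosophy behind the genus-$0$ resolution for $N\le 3$ announced in \cite{QuasiK3}.
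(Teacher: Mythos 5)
This statement is a conjecture and the paper contains no proof of it; your proposal correctly recognizes that, and the evidence-gathering strategy you describe --- deformation and monodromy invariance to normalize the data, the Maulik--Pandharipande Gromov--Witten/Noether--Lefschetz relation for the Fano and Debarre--Voisin pencils, mirror symmetry \cite{CFK} for the total-space invariants, and Borcherds--McGraw modularity \cite{Bor2, McGraw} to pin down the Noether--Lefschetz series --- is exactly the one the paper carries out, yielding the special cases of Proposition~\ref{prop:first cases intro} and the numerical verification up to degree $38$. One small correction: the primitive genus-$0$ inputs in $K3^{[2]}$-type are the evaluations of \cite{HilbK3} recalled in Remark~\ref{rmk:primitive evaluations}, not the Katz--Klemm--Vafa formula, which pertains to K3 surfaces.
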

\vspace{8pt}

The invariance property discussed in Section \ref{invariance} and Property 4 of Section~\ref{subsec:zariski closure}
imply that the right hand side of the conjecture is independent of the choice of $(X_k,\varphi_k)$.
Using that $\sum_{k|a} \mu(k) = \delta_{a1}$, Conjecture~\ref{conjecture:MC} is also seen to be equivalent to Conjecture~\ref{conjecture:MC_intro} of the introduction.

The reduced Gromov--Witten invariants of $X$ can only be non-zero if the dimension constraint
%Using the dimension constraint on the moduli space $\Mbar_{g,n}(X,\beta)$, i.e. that for non-zero Gromov--Witten invariants we need
\[ (\dim X - 3)(1-g) + N + 1 = \deg(\alpha) + \sum_i \deg(\gamma_i) \]
is satisfied. Hence the conjecture can also be rewritten as:
\begin{multline*}
\Blangle \alpha ; \gamma_1, \ldots, \gamma_N \Brangle^X_{g,\beta} \\
=
\sum_{k | \beta} 
k^{\dim(X)(g-1) - 1 + \sum_i \deg(\gamma_i)}
(-1)^{[\beta] + [\beta/k]}
\Blangle \alpha ; \varphi_k(\gamma_1), \ldots, \varphi_k(\gamma_N) \Brangle^X_{g,\varphi_k(\beta/k)}.
\end{multline*}
Since for K3 surfaces the residue always vanishes, Conjecture~\ref{conjecture:MC} specializes
for K3 surfaces to the conjecture made in \cite[Conj C2]{K3xE}.

\begin{rmk} The condition that we ask of the residue, i.e.
$\pm \left[ \varphi_k(\beta/k) \right] = \pm [\beta/k]$ is necessary for the conjecture to hold.
For example consider $X$ of $K3^{[5]}$-type and two primitive classes $\beta_1, \beta_2$ with
$(\beta_1 \cdot \beta_1) = (\beta_2 \cdot \beta_2) = 16$ but $[\beta_1]=0$ and $[\beta_2] = 4$ in $\BZ/8 \BZ$.
Then by \cite{HilbK3} one has 
\[ \ev_{\ast}[\Mbar_{0,1}(X,\beta_1)]^{\text{red}} = 1464 \beta_1^{\vee}, \quad  \ev_{\ast}[\Mbar_{0,1}(X,\beta_2)]^{\text{red}} = 480 \beta_2^{\vee} \]
so an isometry taking $\beta_1$ to $\beta_2$ does not preserve Gromov-Witten invariants.
\end{rmk}

\subsection{Uniruled divisors}
An uniruled divisor $D \subset X$ which is swept out by a rational curve in class $\beta$
is a component of the image of $\ev:\Mbar_{0,1}(X,\beta) \to X$.
The virtual class of these uniruled divisors is given by the pushforward
$\ev_{\ast} [\Mbar_{0,1}(X,\beta)]^{\text{red}}$.
For $\beta$ primitive and $(X,\beta)$ very general, the virtual class is closely related to the actual class \cite{OSY}.
%See also also \cite{OSY} for a discussion to the geometric count of these divisors in class $\beta$ is primitive and $(X,\beta)$ is very general.

By monodromy invariance (e.g. \cite[Section 2.6]{OSY}) there exists $N_{\beta} \in \BQ$ such that
\begin{equation} \ev_{\ast} [\Mbar_{0,1}(X,\beta)]^{\text{red}} = N_{\beta} \cdot h \label{uniruled case} \end{equation}
where $h = ( \beta, - ) \in H_2(X,\BQ)^{\vee} \cong H^2(X,\BQ)$ is the dual of $\beta$
with respect to the Beauville-Bogomolov-Fujiki form.
Since by deformation invariance $N_{\beta}$ only depends on the divisibility $m = \div(\beta)$, the square $s = (\beta, \beta)$ and the residue $r = [\beta / \div(\beta)]$
we write 
\[ N_{\beta} = N_{m,s,r}. \]
Conjecture~\ref{conjecture:MC} then says that
\begin{equation} \label{conj_for_uniruled}
N_{m,s,r} = \sum_{k | \beta} \frac{1}{k^3} (-1)^{mr + \frac{m}{k} r} N_{1, \frac{s}{k^2}, \frac{mr}{k}}
\end{equation}
The primitive numbers $N_{1,s,r}$ have been determined in \cite{HilbK3}.

%In Table~\ref{uniruled_table} we list the first cases in $K3^{[2]}$-type
%(the residue $r$ is here determined by $s$ via $r = 2 s \text{ mod } 2$).
%(the residue $r \in \BZ_2$ is determined by $s$).
% where the residue only depends on the square $(\beta,\beta)$ and hence is dropped.
%
%\begin{table}[ht]
%{\renewcommand{\arraystretch}{1.5}\begin{tabular}{| c | c | c | c | c | c | c | c | c | c | c |}
%\hline
%$\!s$\! & $-\frac{5}{2}$ & $-2$ & $- \frac{1}{2}$ & $0$ & $\frac{3}{2}$ & $2$ & $\frac{7}{2}$ & $4$ & $\frac{11}{2}$ & $6$ \\
%\hline
%$N_{1,s,r}$ & $0$ & $1$ & $4$ & $30$ & $120$ & $504$ & $1980$ & $6160$ & $23576$ & $60720$ \\
%\hline
%\end{tabular}}
%\vspace{8pt}
%\caption{The first values of $N_{1,s,r}$ in $K3^{[2]}$-type.} \label{uniruled_table}
%%\vspace{-15pt}
%\end{table}

\begin{example} \label{Ex1}
Let $X = \Hilb_2(S)$, and let $A \in H_2(X,\BZ)$ be the class of the exceptional curve.
We have
\[ \ev_{\ast} [\Mbar_{0,1}(X,\beta)]^{\text{red}} = \Delta_{\Hilb_2(S)} = -2 \delta. \]
Since $A^{\vee} = -\frac{1}{2} \delta$ we see $N_{1,-\frac{1}{2},1} = 4$.
The multiple cover formula then predicts
that for even $\ell \in \BZ_{\geq 1}$ we have
%\[ N_{2A} = N_{2,-2,1} = N_{1,-2,0} + (-1) \cdot \frac{1}{8} N_{A} = 1 - \frac{1}{2} = \frac{1}{2} \]
%where we have used $N_{1,-2,0} = 1$. %for a $(-2)$-class $B \in H^2(S,\BZ)$ viewed as a class on $X$.
%Similarly, for even $\ell \in \BZ_{\geq 1}$ we have,
\[ N_{\ell A} = -\frac{1}{\ell^3} N_{1,-\frac{1}{2},1} + \frac{1}{(\ell/2)^3} N_{1,-2,0} = \frac{-4}{\ell^3} + \frac{8}{\ell^3} = \frac{4}{\ell^3} \]
where we have used $N_{1,-2,0} = 1$. This matches the degree-scaling property discussed in \cite{OkPandHilbC2, MO}. \qed
\end{example}
%
%We rewrite the multiple cover conjecture in terms of BPS numbers $\mathsf{n}_{\beta}$:
%%. Define BPS numbers
%%$\mathsf{n}_{\beta}$ by the equality of generating series
%\[
%\sum_{\beta} (-1)^{[\beta]} N_{\beta} t^{\beta} = \sum_{\alpha} (-1)^{[\alpha ]} \mathsf{n}_{\alpha} \sum_{k \geq 1} \frac{1}{k^3} t^{k \alpha}
%\]
%where $\alpha, \beta$ run over all all effective curve classes.
%Equivalently,
%\[ n_{\beta} = \sum_{k|\beta} \frac{\mu(k)}{k^3} (-1)^{[\beta]+[\beta/k]} N_{\beta/k}. \]
%As before one can write $n_{\beta} = n_{m,s,r}$. Equation \eqref{conj_for_uniruled}is then equivalent to the following independence of BPS numbers from the divisibility:
%\[ n_{m,s,r} = n_{1,s,rm}. \]
%
%\begin{example}
%Let $F \in H^2(S,\BZ)$ be the fiber class of an elliptic fibration,
%and write $F$ also for the corresponding class in $H_2(\Hilb_2 S,\BZ)$.
%By reducing to $\Hilb_2(\p^1 \times E)$ and using the series $T$ in \cite[Thm.9]{HilbK3} one checks:
%\[ n_{mF + kA} = n_{F + kA} =
%\begin{cases}
%30 & \text{ if } k = 0 \\
%4 & \text{ if } k = \pm 1 \\
%1 & \text{ if } k = \pm 2 \\
%0 & \text{ if } |k| > 2.
%\end{cases}
%\]
%Some of these cases also follow from Example~\ref{Ex1} by deformation invariance.
%\end{example}

\subsection{Fourfolds}
We consider genus $0$ Gromov-Witten invariants of a variety $X$ of $K3^{[2]}$-type.
By dimension considerations 
%and since all non-trivial tautological classes are supported on the boundary,
all genus $0$ Gromov-Witten invariants are determined by the $2$-point class:
\[ \ev_{\ast} [\Mbar_{0,2}(X,\beta)]^{\text{red}} \in H^{\ast}(X \times X). \]

Following the arguments of \cite[Sec.2]{OSY}\footnote{One 
uses that the class is monodromy invariant and a Lagrangian correspondence,
%the pushforward $\ev_{\ast} [\Mbar_{0,2}(X,\beta)]^{\text{red}}$ is a Lagrangian correspondence,
and that for very general $(X,\beta)$ the image of the Hodge classes under this correspondence is annihilated by the symplectic form, see \cite[Sec.1.3]{OSY}. The reference treats only the case of primitive $\beta$, but the imprimitive case follows likewise with minor modifications.},
for every effective $\beta \in H_2(X,\BZ)$
there exist constants\footnote{The constant $N_{\beta}$ appearing in \eqref{uniruled case} is equal to $G_{\beta}$ in the $K3^{[2]}$-case.}
$F_{\beta}, G_{\beta} \in \BQ$
such that:
\begin{align*}
& \ev_{\ast}[\Mbar_{0,2}(X,\beta)]^{\text{red}} = 
 F_{\beta} (h^2 \otimes h^2) + 
G_{\beta} \big(h \otimes \beta + \beta \otimes h + (h \otimes h) \cdot c_{BB} \big)  \\
& \ + \left( -\frac{1}{30} (h^2 \otimes c_2 + c_2 \otimes h^2) + \frac{1}{900} (\beta, \beta) c_2 \otimes c_2 \right) (G_{\beta} + (\beta, \beta) F_{\beta} )
\end{align*}
where 
\begin{itemize}
\item $h = ( \beta, - ) \in H^2(X,\BQ)$ is the dual of the curve class,
\item $c_2 = c_2(X)$ is the second Chern class, and
\item $c_{BB} \in H^2(X) \otimes H^2(X)$ is the inverse of the Beauville-Bogomolov Fujiki form.
\end{itemize}

In $K3^{[2]}$-type
the residue $r = [\beta]$ of a curve class is determined by $s = (\beta,\beta)$ via $r = 2 s \text{ mod } 2$.
%Since the residue is determined by $s$
So we can write
%By deformation invariance the constants $F_{\beta}, G_{\beta}$ only depend on the divisibility $m = \div(\beta)$ and the square $s= (\beta, \beta)$ of $\beta$.
%So we write:
%\[ F_{\beta} = F_{m,s}, \quad G_{\beta} = G_{m,s} \]
$F_{\beta} = F_{m,s}$ and $G_{\beta} = G_{m,s}$. % (the residue $r$ is determined by $s$).
The multiple cover conjecture for $K3^{[2]}$-type in genus $0$ is then equivalent to:
\begin{gather*}
F_{m,s} = \sum_{k|m} \frac{1}{k^5} (-1)^{2 (s+s/k^2)} F_{1,\frac{s}{k^2}} \\
G_{m,s} = \sum_{k|m} \frac{1}{k^3} (-1)^{2 (s+s/k^2)} G_{1,\frac{s}{k^2}}.
\end{gather*}
%This can be reformulated in terms of the BPS numbers:
We can also define % the BPS numbers
\begin{gather*}
f_{m,s} = \sum_{k|m} \frac{\mu(k)}{k^5} (-1)^{2(s + s/k^2)} F_{\frac{m}{k},\frac{s}{k^2}} \\
g_{m,s} = \sum_{k|m} \frac{\mu(k)}{k^3} (-1)^{2(s + s/k^2)} G_{\frac{m}{k},\frac{s}{k^2}}.
\end{gather*}
and arrive at:

\begin{lemma}
Conjecture~\ref{conjecture:MC} in $K3^{[2]}$-type and genus $0$ is equivalent to:
\[ \forall m,s: \ f_{m,s} = f_{1,s}, \quad g_{m,s} = g_{1,s}. \]
\end{lemma}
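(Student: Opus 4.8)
The plan is to recognize both asserted identities as instances of one Möbius inversion, performed after the sign $(-1)^{2(s+s/k^2)}$ has been absorbed into rescaled variables. By the discussion preceding the lemma, Conjecture~\ref{conjecture:MC} in $K3^{[2]}$-type and genus $0$ is precisely the conjunction of the two relations
\[ F_{m,s} = \sum_{k|m}\frac{1}{k^{5}}(-1)^{2(s+s/k^2)}F_{1,s/k^2}, \qquad G_{m,s} = \sum_{k|m}\frac{1}{k^{3}}(-1)^{2(s+s/k^2)}G_{1,s/k^2}, \]
and the conditions $f_{m,s}=f_{1,s}$, $g_{m,s}=g_{1,s}$ are built from the same data with the weight $k^{-5}$ (resp.\ $k^{-3}$) twisted by $\mu(k)$. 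Since the two cases differ only in the exponent, I would prove the equivalence once for an abstract completely multiplicative weight $u(k)=k^{-w}$, $w\in\{3,5\}$, and apply it twice.

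First I would fix a primitive class $\alpha$, put $s_0=(\alpha,\alpha)$, and restrict to its multiples $\beta=m\alpha$. Every square entering the relations is then of the form $j^{2}s_0$ with $j\mid m$, so the only invariants involved are $a_j := F_{j,j^{2}s_0}$ and $b_j := F_{1,j^{2}s_0}$; because $F_{m,s}$ depends only on $(m,s)$ these are well defined, and the inversion is genuinely one-dimensional in the divisibility. In this notation the $F$-relation above reads $a_m=\sum_{k\mid m}k^{-w}\sigma(k)\,b_{m/k}$ with $\sigma(k)=(-1)^{2(m^{2}s_0+(m/k)^{2}s_0)}$, while, using $f_{1,s}=F_{1,s}=b_m$, the claim $f_{m,s}=f_{1,s}$ unwinds to $\sum_{k\mid m}\mu(k)k^{-w}\sigma(k)\,a_{m/k}=b_m$.

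The decisive step is the sign bookkeeping. The paper's residue formula $r=2s\bmod 2$ forces $2s_0\in\BZ$, so both exponents $2m^{2}s_0$ and $2(m/k)^{2}s_0$ are integers and $\sigma(k)$ factorizes as $(-1)^{2m^{2}s_0}(-1)^{2(m/k)^{2}s_0}$; moreover $(-1)^{2j^{2}s_0}=(-1)^{2js_0}$ for every $j$, since $2s_0\,j(j-1)$ is even. Setting $\tilde a_m=(-1)^{2ms_0}a_m$ and $\tilde b_j=(-1)^{2js_0}b_j$, the two sign factors cancel in pairs and the relations collapse to the clean convolutions $\tilde a_m=\sum_{k\mid m}k^{-w}\tilde b_{m/k}$ and $\tilde b_m=\sum_{k\mid m}\mu(k)k^{-w}\tilde a_{m/k}$.

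Finally I would invoke Möbius inversion in its Dirichlet-convolution form: $u(k)=k^{-w}$ is completely multiplicative, hence its Dirichlet inverse is $u^{-1}(k)=\mu(k)k^{-w}$, and by associativity and commutativity of the convolution the statements $\tilde a=u\star\tilde b$ and $\tilde b=u^{-1}\star\tilde a$ are equivalent, in both directions at once. Taking $w=5$ gives the equivalence of the $F$-relation with $f_{m,s}=f_{1,s}$, and $w=3$ gives that of the $G$-relation with $g_{m,s}=g_{1,s}$; combining the two proves the lemma. I expect the only delicate point to be the factorization and pairwise cancellation of the signs in the third paragraph; everything after that is textbook.
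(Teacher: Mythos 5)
Your proof is correct, and it is essentially the argument the paper intends: the paper states this lemma with no proof at all, treating it as an immediate consequence (by M\"obius inversion) of the $F,G$-relations and the definitions of $f_{m,s}, g_{m,s}$ given just before it. Your write-up simply makes that inversion explicit, including the only nontrivial point --- that $(-1)^{2(s+s/k^2)}$ splits as $(-1)^{2ms_0}(-1)^{2(m/k)s_0}$ because $2s_0\in\BZ$ and $j(j-1)$ is even --- so the signs can be absorbed into rescaled variables and the standard Dirichlet-inverse identity $u^{-1}=\mu\cdot u$ for the completely multiplicative weight $u(k)=k^{-w}$ applies.
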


The first few cases are known:

\begin{prop} \label{prop:first cases}
Conjecture~\ref{conjecture:MC} in $K3^{[2]}$-type and genus $0$ holds for all classes $\beta$
such that (i) $(\beta, \beta) < 0$ or (ii) $(\beta, \beta) = 0$ and $N=1$.
\end{prop}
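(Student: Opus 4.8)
The plan is to invoke the reformulation in the preceding lemma, which reduces the genus $0$ conjecture in $K3^{[2]}$-type to the two scalar identities $f_{m,s}=f_{1,s}$ and $g_{m,s}=g_{1,s}$, where $f_{m,s},g_{m,s}$ are the M\"obius transforms of the two-point invariants $F_{m,s}$ and $G_{m,s}$. By deformation invariance (Corollary~\ref{cor:curve deformations invariance}) I may compute $F_{m,s}$ and $G_{m,s}$ on any convenient model $(X,\beta)$ with the prescribed divisibility, square and residue, and I would use the known values of the primitive invariants from \cite{HilbK3}. Since in $K3^{[2]}$-type the residue is determined by $s$, both identities become purely numerical once the relevant $F_{m,s},G_{m,s}$ are in hand. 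Note that for case (ii) only $g_{m,s}=g_{1,s}$ is needed, because the one-point class $\ev_\ast[\Mbar_{0,1}(X,\beta)]^{\mathrm{red}}$ is governed by $G_\beta$ alone via \eqref{uniruled case}.

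\textbf{Case (i): $(\beta,\beta)<0$.} A class of negative square spans an extremal ray of the Mori cone, so after deforming to a very general pair I may assume $\beta=m\alpha$ with $\alpha$ the primitive effective generator of that ray; if $\alpha$ is not effective both sides vanish. The primitive negative classes that then occur are those of the Hilbert--Chow exceptional curve, of square $-\tfrac12$, and of the flopping curve, of square $-2$, exactly the two types visible in Example~\ref{Ex1}. For each such $\alpha$ the stable maps in class $m'\alpha$ are supported on multiple covers of the rigid extremal $\p^1$'s, so the reduced one- and two-point invariants are computed by the associated local model as in \cite{OkPandHilbC2, MO}. This local computation produces the degree scalings $G_{m'\alpha}\sim(m')^{-3}$ and $F_{m'\alpha}\sim(m')^{-5}$ predicted by the conjecture, and substituting them into the M\"obius sums defining $f_{m,s}$ and $g_{m,s}$ collapses these to $f_{1,s}$ and $g_{1,s}$; the truncation of the sum to the squares $-\tfrac12$ and $-2$ is precisely the cancellation already exhibited in Example~\ref{Ex1}.

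\textbf{Case (ii): $(\beta,\beta)=0$, $N=1$.} Here only the uniruled number $N_{m,0,0}=G_{m,0}$ enters, and since the weights $k\mapsto \mu(k)k^{-3}$ and $k\mapsto k^{-3}$ are Dirichlet inverses, the identity $g_{m,0}=g_{1,0}$ is equivalent to $N_{m,0,0}=N_{1,0,0}\,\sigma_{-3}(m)$ with $\sigma_{-3}(m)=\sum_{k\mid m}k^{-3}$. I would prove this by reduction to the K3 surface: deform to $X=\Hilb_2(S)$ with $S$ elliptically fibered and take $\beta=mf$ for $f$ the fiber class, so that the $A$-coefficient, and hence the residue, vanishes. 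Rational curves on $\Hilb_2(S)$ in class $mf$ are swept out by a rational curve in $|mf|\subset S$ together with a free point of $S$, so the uniruled divisor, and thus the reduced one-point invariant $N_{m,0,0}$, is a fixed multiple of the genus $0$ reduced invariant $R^S_{0,mf}$ of the K3 surface. The genus $0$ Katz--Klemm--Vafa/Pandharipande--Thomas theorem (Theorem~\ref{thm:PT}) gives $R^S_{0,mf}=R^S_{0,f}\,\sigma_{-3}(m)$, which yields exactly the required identity.

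The main obstacle lies in case (i). The one-point scaling $G_{m'\alpha}\sim(m')^{-3}$ follows from the uniruled-divisor analysis of \cite{OSY}, but the two-point invariants $F_{m'\alpha}$ require the genuinely harder local two-point reduced theory of the extremal $\p^1$'s; one must verify that its degree scaling is precisely $(m')^{-5}$ and that it interacts correctly with the two distinct extremal squares $-\tfrac12$ and $-2$. A secondary difficulty, in case (ii), is normalization: one must check that the reduced obstruction theory on $\Hilb_2(S)$ factors as the K3 reduced theory times the free point, so that the constant relating $N_{m,0,0}$ and $R^S_{0,mf}$ is genuinely independent of $m$.
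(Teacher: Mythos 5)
Your reduction to the scalar identities $f_{m,s}=f_{1,s}$, $g_{m,s}=g_{1,s}$ and your choice of references are the right starting point, but both cases contain genuine gaps. In case (i) your classification of the relevant classes is false: primitive effective classes of negative square in $K3^{[2]}$-type are not confined to the squares $-\tfrac{1}{2}$ and $-2$. Deforming to $\Hilb_2(S)$ with $S$ containing a $(-2)$-curve $B$, the classes $aB+rA$ with $\gcd(a,r)=1$ are effective (for suitable signs) of square $-2a^2-\tfrac{r^2}{2}$; for instance the line class of a Lagrangian plane has square $-\tfrac{5}{2}$, and its two-point invariants enter the conjecture, so these classes cannot be ignored. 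Nor are the curves in such classes multiple covers of rigid $\p^1$'s: already in class $B$ the curves $\{z+p : z\in B\}$ move in a two-dimensional family as $p$ varies. (Note also that a pair $(X,\beta)$ whose Hodge part of $H_2$ has rank one spanned by a negative class is never projective, so the ``very general'' Mori-extremal model you invoke does not literally exist; the deformations used in practice retain a polarization.) What the paper's one-line proof actually encodes is that, after a suitable deformation, all curves in class $\beta$ and in every class entering the M\"obius sums lie in an open subset isomorphic to $\Hilb_2(\BC^2)$ or to $\Hilb_2$ of an $A_n$-resolution, so that $F_{m,s},G_{m,s}$ for \emph{all} $s<0$ are given by the full (equivariant) quantum cohomology computations of \cite{OkPandHilbC2, MO}, against which the identities $f_{m,s}=f_{1,s}$, $g_{m,s}=g_{1,s}$ are then checked; the two-term cancellation of Example~\ref{Ex1} covers only $\beta=\ell A$ and does not extend by itself.

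In case (ii) the step that fails is your central claim that every rational curve in $\Hilb_2(S)$ of class $mf$ is ``a rational curve in $|mf|$ plus a free point'', so that $N_{m,0,0}$ is an $m$-independent multiple of $R^S_{0,mf}$. The moduli space $\Mbar_{0,1}(\Hilb_2(S),mf)$ has further components: stable maps whose length-$2$ subschemes are supported entirely on a single nodal fiber $F_t$, and chains distributed over two nodal fibers (one point traverses $F_{t_1}$ while the other sits on $F_{t_2}$, then the roles swap). All of these carry contributions of the reduced class, and no product decomposition of the reduced obstruction theory is available on them. Worse, the proportionality you want, $N_{m,0,0}=c\,R^S_{0,mf}$ with $c$ independent of $m$, is---given $R^S_{0,mf}=24\,\sigma_{-3}(m)$ from Theorem~\ref{thm:PT} in genus $0$---exactly equivalent to the identity $g_{m,0}=g_{1,0}$ you are trying to prove, so your argument is circular at its only nontrivial step. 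The paper proceeds differently: it pairs the one-point class with a curve class to capture $G_\beta$, uses the techniques of \cite{HilbK3} to reduce the whole generating series over $m$ to $\Hilb_2(\p^1\times E)$, and reads off the multiple cover rule from the closed evaluation of that series (the series $T$ of \cite[Thm.~9]{HilbK3}).
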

\begin{proof}
The case $(\beta, \beta) < 0$ follows from \cite{OkPandHilbC2, MO}.
In case $(\beta, \beta) = 0$ the series $g_{m,s}$ is determined by intersecting the $1$-pointed class with a curve,
and then use the methods of \cite{HilbK3} to reduce to $\Hilb_2(\p^1 \times E)$. The resulting series is evaluated by $T$ in \cite[Thm.9]{HilbK3}.
\end{proof}

For later use we will also need to following expression for $1$-pointed descendence invariants:
\begin{align*}
\ev_{\ast} [\Mbar_{0,1}(X,\beta) ] & = G_{\beta} \beta^{\vee} \\
\ev_{\ast} \left( \psi_1 \cdot [\Mbar_{0,1}(X,\beta) ] \right) & = 2 F_{\beta} h^2 - \frac{1}{15} \left( G_{\beta} + (\beta, \beta) F_{\beta} \right) c_2(X) \\
\ev_{\ast} \left( \psi_1^2 \cdot [\Mbar_{0,1}(X,\beta) ] \right) & = -12 F_{\beta} \\
\ev_{\ast} \left( \psi_1^3 \cdot [\Mbar_{0,1}(X,\beta) ] \right) & = 24 F_{\beta}
\end{align*}
This follows by monodromy invariance and topological recursions.
In particular, to check the multiple cover formula in $K3^{[2]}$-type and genus $0$ it is enough to consider $1$-point descendent invariants.

\begin{rmk} \label{rmk:primitive evaluations}
For convenience we recall the evaluation of $f_{1,m,s}$ and $g_{1,m,s}$. Let
\begin{gather*}
\vartheta(q) = \sum_{n \in \BZ} q^{n^2}, \quad 
\alpha(q) = \sum_{\substack{\text{odd }n > 0 \\ d|n}} d q^n, \\
G_2(q) = -\frac{1}{24} + \sum_{n \geq 1} \sum_{d|n} d q^n, \quad 
\Delta(q) = q \prod_{n \geq 1} (1-q^n)^{24}
\end{gather*}
%G_2(q) = 1 - \frac{2k}{B_k} \sum_{\substack{n \geq 1 \\ d|n}} d^{k-1} q^{4n}
Then by \cite{HilbK3} one has: % the evaluations:
\begin{align*}
\sum_{s} f_{1,s} (-q)^{4s} &= \frac{1}{4} \frac{-1}{\vartheta(q) \alpha(q) \Delta(q^4)} \\
\sum_{s} g_{1,s} (-q)^{4s} &= \frac{1}{12} \frac{\vartheta^{4}(q) + 4 \alpha(q) + 24 G_2(q^4)}{\vartheta(q) \alpha(q) \Delta(q^4)}.
\end{align*}
\end{rmk}

\subsection{Hilbert schemes of elliptic K3 surfaces} \label{Subsec:Ex:Hilb Elliptic Curves}
Let 
\[ \pi : S \to \p^1 \]
be an elliptic K3 surface with a section. Let $B, F \in H^2(S,\BZ)$ be the class of the section
and a fiber respectively, and let 
\[ W = B+F. \]
We consider the Hilbert scheme $X = \Hilb_n S$ and the generating series of Gromov-Witten invariants
\[
F_{g,m}(\alpha; \gamma_1, \ldots, \gamma_N) = \sum_{d = -m}^{\infty} \sum_{r \in \BZ} \left\langle \alpha ; \gamma_1, \ldots, \gamma_N \right\rangle^{\Hilb_n S}_{g, mW+dF+rA} q^d (-p)^r.
\]

We state a characterization of the multiple cover formula:

Consider the basis of $H^{\ast}(S,\BR)$ defined by 
\[ \CB = \{ 1, \pt, W, F, e_3, \ldots, e_{22} \}, \]
where $\pt$ is the class of a point and $e_3, \ldots, e_{22}$ is a basis of $\BQ\langle W,F \rangle^{\perp}$ in $H^2(S,\BR)$.
An element $\gamma \in H^{\ast}(X,\BR)$ is in the Nakajima basis with respect to $\CB$ if
it is of the form
\[ \gamma = \prod_i \Fq_{k_i}(\alpha_i)1, \quad \alpha_i \in \CB. \]
Let $w(\gamma)$ and $f(\gamma)$
be the number of classes $\alpha_i$ which are equal to $W$ and $F$ respectively,
and define a modified degree grading $\underline{\deg}$ by:
\[ \underline{\deg}(\gamma) = \deg(\gamma) + w(\gamma) - f(\gamma). \]

For a series $f = \sum_{d,r} c(d,r) q^d p^r$ define the formal Hecke operator by
\[
T_{m,\ell} f = \sum_{d,r} \left( \sum_{k|(m,d,r)} k^{\ell-1} c\left( \frac{md}{k^2}, \frac{r}{k} \right) \right) q^d p^r.
\]

\begin{lemma}
Conjecture~\ref{conjecture:MC} holds if and only if for all $m>0$, all $g, N, \alpha$ and all $(\deg, \underline{\deg})$-bihomogeneous classes $\gamma_i$ we have:
\begin{equation} F_{g,m}(\alpha; \gamma_1, \ldots, \gamma_N) =
m^{\sum_i \deg(\gamma_i) - \underline{\deg}(\gamma_i)} \cdot
T_{m,\ell} F_{g,1}(\alpha; \gamma_1, \ldots, \gamma_N)
\label{ellmc} \end{equation}
where $\ell = 2n(g-1) + \sum_i \underline{\deg}(\gamma_i)$.
\end{lemma}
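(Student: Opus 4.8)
The plan is to read off both sides of \eqref{ellmc} as coefficients of $q^d p^r$ and match them by specializing Conjecture~\ref{conjecture:MC} to the class $\beta = mW + dF + rA$ on $X = \Hilb_n S$. First I would record the deformation invariants of such a class. Since $\{W,F\}$ spans a hyperbolic plane ($W^2=F^2=0$, $(W,F)=1$) and $A$ is orthogonal to $H_2(S,\BZ)$ and generates the discriminant, one gets $\div(\beta) = \gcd(m,d,r)$ and, by Remark~(ii)--(iii) of Section~\ref{subsec:curve classes}, residue $[\beta]\equiv r \pmod{2n-2}$. In particular every divisor $k\mid\beta$ satisfies $k\mid(m,d,r)$, and since $2n-2$ is even the sign $(-1)^{[\beta]+[\beta/k]}$ equals $(-1)^{r+r/k}$ unambiguously, despite the $\pm$ in the residue set.

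The key device is an explicit primitivization. For each $k\mid(m,d,r)$ I would take $X_k = X$ and let $\varphi_k$ be the real isometry of $H^2(X,\BR)$ fixing $\delta$ and $\langle W,F\rangle^{\perp}$ and acting by $W\mapsto \tfrac{k}{m}W$, $F\mapsto\tfrac{m}{k}F$. This preserves the form (as $(W,F)=1$, $W^2=F^2=0$) and sends $\beta/k$ directly to $W + \tfrac{md}{k^2}F + \tfrac{r}{k}A$, which is primitive (its $W$-coefficient is $1$) and has the same residue $\tfrac{r}{k}$ as $\beta/k$; hence $(X_k,\varphi_k)$ is admissible and $\varphi_k(\beta/k)$ is literally a class on $X$, realized as the $q^{md/k^2}p^{r/k}$-coefficient of $F_{g,1}$. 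Extending $\varphi_k$ as the parallel transport lift $\rho(\varphi_k,1)$ (take $P_\psi=\id$) and invoking the Nakajima equivariance of Property~3 of Section~\ref{subsec:zariski closure}, a $(\deg,\underline{\deg})$-bihomogeneous monomial $\gamma_i$ with $w(\gamma_i)$ factors $\Fq_{k_j}(W)$ and $f(\gamma_i)$ factors $\Fq_{k_j}(F)$ becomes the eigenvector $\varphi_k(\gamma_i) = (k/m)^{w(\gamma_i)}(m/k)^{f(\gamma_i)}\gamma_i$. Substituting into Conjecture~\ref{conjecture:MC}, multiplying by the $(-1)^r$ built into $F_{g,m}$ (which combines with $(-1)^{r+r/k}$ to leave $(-1)^{r/k}$ pairing with $c_1$), the $q^dp^r$-coefficient of $F_{g,m}$ becomes
\[ \sum_{k\mid(m,d,r)} k^{3g-3+N-\deg(\alpha)}\,(k/m)^{w}(m/k)^{f}\, c_1\!\left(\tfrac{md}{k^2},\tfrac{r}{k}\right), \]
where $w=\sum_i w(\gamma_i)$, $f=\sum_i f(\gamma_i)$ and $c_1$ denotes the coefficients of $F_{g,1}$.

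It then remains to separate the powers of $m$ and $k$. The prefactor $(k/m)^w(m/k)^f = m^{f-w}k^{w-f}$ pulls out exactly $m^{f-w} = m^{\sum_i(\deg\gamma_i-\underline{\deg}\gamma_i)}$ demanded by \eqref{ellmc}, since $\underline{\deg}(\gamma_i)=\deg(\gamma_i)+w(\gamma_i)-f(\gamma_i)$. For the remaining exponent of $k$, the reduced dimension constraint $\deg(\alpha)+\sum_i\deg\gamma_i = (2n-3)(1-g)+N+1$ enters: substituting $\deg(\alpha)$ and simplifying, $3g-3+N-\deg(\alpha)+w-f$ collapses to $2n(g-1)+\sum_i\underline{\deg}(\gamma_i)-1 = \ell-1$, matching $T_{m,\ell}$ precisely. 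This proves that Conjecture~\ref{conjecture:MC} implies \eqref{ellmc}.

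For the converse I would argue that \eqref{ellmc}, ranging over all elliptic $S$, all $m$, and all bihomogeneous insertions, encodes the full conjecture: by Corollary~\ref{cor:curve deformations invariance} and Remark~\ref{rmkHodge} the conjecture for $(X,\beta)$ depends only on $(\div\beta,(\beta,\beta),[\beta/\div\beta])$, and every such triple is realized by some $(\Hilb_n S,\, mW+dF+rA)$ --- solving $(\beta,\beta)=2md-\tfrac{r^2}{2n-2}$ for $d$ given the target divisibility and residue, the integrality being guaranteed by the congruences a valid square must satisfy --- while Nakajima monomials over $\CB$ span $H^\ast(\Hilb_n S)$, so bihomogeneous insertions suffice by multilinearity. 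The main obstacle I anticipate is not any single step but the coordinated bookkeeping of the three exponent contributions (the $k^{3g-3+N-\deg\alpha}$ from the conjecture, the $(k/m)^w(m/k)^f$ from $\varphi_k$, and the dimension constraint) so that the powers of $m$ and $k$ separate exactly as in \eqref{ellmc}; a secondary delicate point is verifying the realizability of all deformation types, including the integrality of $d$, for the converse.
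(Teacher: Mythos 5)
Your proof is correct and follows essentially the same route as the paper's: the same primitivization isometry $\varphi_k$ rescaling $W$ and $F$, the same use of Nakajima equivariance (Property~3) to turn bihomogeneous insertions into eigenvectors, the same exponent bookkeeping via the dimension constraint collapsing $3g-3+N-\deg(\alpha)+w-f$ to $\ell-1$, and the same converse via deformation equivalence to $(\Hilb_n(S), mW+dF+rA)$. Your explicit treatment of the sign ambiguity in the residue set and of the realizability of all deformation types is slightly more detailed than the paper's, but the argument is the same.
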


\begin{proof}
Given the class $\beta = mW + dF + rA$ and a divisor $k | \beta$,
consider the real isometry $\varphi_k : H^2(X,\BR) \to H^2(X,\BR)$ defined by
\begin{align*}
 W & \mapsto \frac{k}{m} W \\
F & \mapsto \frac{m}{k} F \\ 
\gamma & \mapsto \gamma \text{ for all } \gamma \perp W,F.
\end{align*}
We extend this map to the full cohomology by
$\varphi_k = \rho(\phi_k,0): H^{\ast}(X,\BR) \to H^{\ast}(X,\BR)$.
We then have
\[ \varphi_k\left( \frac{\beta}{k} \right) = W + \frac{dm}{k^2} F + \frac{r}{k} A. \]
The Nakajima operators are equivariant with respect
to the action of $\varphi_k$ and the isometry of $H^{\ast}(S,\BR)$ given by
\[ \widetilde{\varphi}_k = \varphi_k|_{H^2(S,\BR)} \oplus \id_{H^0(S,\BR) \oplus H^4(S,\BR)}, \]
see Property 3 of Section~\ref{subsec:zariski closure}.
%More precisely, we have
%\[ \varphi_k\left( \prod_i \Fq_i(\alpha_i) 1 \right) = \prod_i \Fq_i( \widetilde{\varphi}_k \alpha_i ) 1. \]
%\[ \varphi_k(W) = \frac{k}{m} W, \quad \varphi_k(F) = \frac{m}{k}F, \quad \varphi_k|_{\BQ<W,F>^{\perp}} = \id. \]
%We extend $\varphi_k$ to an automorphism $H^{\ast}(X,\BR) \to H^{\ast}(X,\BR)$ by the representation $\rho$.

Let $\gamma_i \in H^{\ast}(X,\BQ)$ be elements in the Nakajima basis with respect to $\CB$.
If Conjecture~\ref{conjecture:MC} holds, then its application with respect to $\varphi_k$ yields:
\begin{multline} \label{abc}
F_{g,m}(\alpha; \gamma_1, \ldots, \gamma_N) =
\sum_{d,r} q^d p^r 
\sum_{k|(m,d,r)} (-1)^{r/k}
k^{3g-3+N-\deg(\alpha)} \\
\times \left( \frac{m}{k} \right)^{\sum_i f(\gamma_i) -w(\gamma_i)} \left\langle \alpha ; \gamma_1, \ldots, \gamma_N \right\rangle^{\Hilb_n}_{g, W + \frac{md}{k^2} F + \frac{r}{k} A}.
\end{multline}
Using the dimension constraint, our modified degree function and the formal Hecke operators this becomes
%\begin{comment}
Define the weight of a cohomology class in the Nakajima basis with respect to $\CB$ by
\[ \wt\left( \prod_i \Fq_{a_i}(\alpha_i) 1 \right) = \sum_i \wt(\alpha_i) \]
where
\[ \wt(\alpha) =
 \begin{cases}
  1 & \text{ if } \alpha \in \{ \pt, W \} \\
  -1 & \text{ if } \alpha \in \{ 1, F \} \\
  0 & \text{ if } \alpha \in \{ 1,F,W,\pt \}^{\perp}.
 \end{cases}
\]
For a homogeneous $\gamma \in H^{\ast}(\Hilb_n(S))$ we then set $\underline{\deg}(\gamma) = \wt(\gamma) + n$.

Since the Nakajima operator $\Fq_i(\alpha)$ has degree $i-1 + \deg(\alpha)$ we have
for $\gamma = \prod_{i=1}^{\ell} \Fq_i(\alpha_i) 1 \in H^{\ast}(\Hilb_n S )$ that
\[ \deg( \gamma ) = n - \ell + \sum_{i} \deg(\alpha_i). \]
Hence we can rewrite:
%(with $\deg(\alpha) = 1 + \wt(\alpha) + \delta_{\alpha,F} - \delta_{\alpha,W}$) we get
\begin{align*}
\sum_{i=1}^{n} \deg(\gamma_i) + w - f & = n N - \sum_i \ell(\gamma_i) + \sum_{i,j} \deg(\alpha_{ij}) + w - f \\
& = n M + \sum_i \wt(\gamma_i) \\
& = \sum_i \underline{\deg}(\gamma_i)
\end{align*}
%\end{comment}
%Applying this to \eqref{abc} and using the dimension constraint yields
\begin{multline} \label{ml1}
F_{g,m}(\alpha; \gamma_1, \ldots, \gamma_N) = m^{\sum_i \deg(\gamma_i) - \underline{\deg}(\gamma_i)} \\
\times \sum_{d,r} q^d p^r
\sum_{k|(m,d,r)} 
(-1)^{r/k}
k^{2n(g-1)-1 + \sum_i \underline{\deg}(\gamma_i)}
\left\langle \alpha ; \gamma_1, \ldots, \gamma_N \right\rangle^{\Hilb_n}_{g, W + \frac{md}{k^2} F + \frac{r}{k} A}
\end{multline}
which is \eqref{ellmc}.

Conversely, equality \eqref{ellmc} implies Conjecture~\ref{conjecture:MC} since
any pair $(X,\beta)$ is deformation equivalent to some $(\Hilb_n(S), \widetilde{\beta} = mW+dF+kA)$ with $m>0$,
and the right hand side of Conjecture~\ref{conjecture:MC} is independent of choices.
\end{proof}

We will reinterpret the lemma in terms of Jacobi forms in \cite{K3nHAE}.
See also \cite{BB} for a parallel discussion in the case of K3 surfaces.

\section{Noether-Lefschetz theory} \label{sec:NL theory}

\subsection{Lattice polarized holomorphic-symplectic varieties}
Let $V$ be the abstract lattice, % isomorphic to $H^2(Y,\BZ)$ for some holomorphic-symplectic variety $Y$.
and let $L \subset V$ be a primitive non-degenerate sublattice\footnote{i.e. the quotient is torsion free.}.

An $L$-polarization of a holomorphic-symplectic variety $X$ is a primitive embedding
\[ j : L \hookrightarrow \Pic(X) \]
such that 
\begin{itemize}
\item there exists an isometry $\varphi : V \xrightarrow{\cong} H^2(X,\BZ)$ with $\varphi|_{L} = j$, and
\item the image $j(L)$ contains an ample class.
\end{itemize}
We  call the isometry $\varphi$ as above a marking of $(X,j)$.
If the image $j(L)$ only contains a big and nef line bundle,
we say that $X$ is $L$-quasipolarized.
% of the image $j(L)$ contains a quasi-polarization, that is a big and nef line bundle.
%By Huybrechts \cite{HuyInvent} any quasi-polarized holomorphic-symplectic manifold is algebraic.
Let $\CM_{L}$ be the moduli space of $L$-quasipolarized holomorphic-symplectic varieties of a given fixed deformation type.

The period domain associated to $L \subset V$ is
\[ \CD_{L} = \{ x \in \p(L^{\perp} \otimes \BC) \, | \, \langle x, x \rangle = 0, \langle x, \bar{x} \rangle > 0 \} \]
and has two connected components. Let $\CD_{L}^{+}$ be one of these components.
Consider the subgroup
\[ \Mon(V) \subset O(V) \]
which, for some choice of marking $\varphi : V \to H^2(X,\BZ)$ for some $(X,j)$ defining a point in $\CM_L$,
can be identified with the monodromy group $\Mon^2(X)$ of $X$.\footnote{If the monodromy group $\Mon(V)$ is normal in $O(V)$, then it
does not depend on the choice of $(X,j)$; this is known for all known examples of holomorphic-symplectic varieties.}
Let $\Mon(V)_L$ be the subgroup of $\Mon(V)$ that acts trivially on $L$.
Then the global Torelli theorem says that the period mapping
\[ \mathrm{Per} : \CM_{L} \to \CD_L^{+} / \Mon(V)_L \]
is surjective, restricts to an open embedding on the open locus of $L$-polarized holomorphic symplectic varieties,
and any fiber consists of birational holomorphic symplectic varieties, see \cite{MarkmanSurvey} for a survey and references.

\subsection{$1$-parameter families}
Let $L_i, i = 1, \ldots, \ell$ be an integral basis of $L$.
%Let $L \subset V$ be a primitive sublattice, and let $L_i$ be an integral basis of $L$.
%Let $L_i$, $i=1,\ldots, \ell$ be an integral basis of $L$. % Let $L \subset V$ be a fixed primitive sublattice with basis $L_i$.

Given a compact complex manifold $\CX$ of dimension $2n+1$, line bundles
\[ \CL_1, \ldots, \CL_{\ell} \in \Pic(\CX) \]
and a morphism $\pi : \CX \to C$ to a smooth proper curve,
following \cite[0.2.2]{KMPS}, we call the tuple $(\CX, \CL_1, \ldots, \CL_{\ell}, \pi)$ 
a \emph{$1$-parameter family of $L$-quasi\-polarized holomorphic-symplectic varieties} if the following holds:
\begin{itemize}
\item[(i)] For every $t \in C$, the fiber $(X_t, L_i \mapsto \CL_i|_{X_t})$ is a $L$-quasipolarized holomorphic-symplectic variety.
\item[(ii)] There exists an vector $h \in L$ which yields a quasi-polarization on all fibers of $\pi$ simultaneously.
\end{itemize}
Any $1$-parameter family as above defines a morphism $\iota_{\pi} : C \to \CM_{L}$ into the moduli space of $L$-quasipolarized holomorphic-symplectic varieties
(of the deformation type specified by a fiber).

\subsection{Noether-Lefschetz cycles}
Given primitive sublattices $L \subset \widetilde{L} \subset V$,
consider the open substack
\[ \CM_{\widetilde{L}}' \subset \CM_{\widetilde{L}} \]
parametrizing pairs $(X, j : \widetilde{L} \hookrightarrow \Pic(X))$ such that
$j(L)$ contains a quasi-polarization.
There exists a natural proper morphism $\iota :  \CM_{\widetilde{L}}' \to \CM_{L}$ defined by restricting $j$ to $L$.
The Noether-Lefschetz cycle associated to $\widetilde{L}$ is the class of the reduced image of this map:
\[ \NL_{\widetilde{L}} = [ \iota( \CM_{\widetilde{L}}' ) ] \in A^c(\CM_L). \]
The codimension $c$ of the cycle is given by $\rank(\widetilde{L}) - \rank(L)$.
For $c=1$ we call $\NL_{\widetilde{L}}$ a Noether-Lefschetz divisor of the first type.

\subsection{Heegner divisors}
We review the construction of Heegner divisors. Their relation to Noether-Lefschetz divisors
will yield modularity results for intersection numbers with Noether-Lefschetz divisors. %numbers, that is intersection numbers of curves with Noether-Lefschetz divisors.

Consider the lattice
\[ M = L^{\perp} \subset V \]
and the subgroup
\[ \Gamma_M = \{ g \in O^{+}(M) | g \text{ acts trivially on } M^{\vee}/M \} \]
where $O^{+}(M)$ stands for those automorphisms which preserve the orientation, or equivalently, the component $\CD_L^{+}$.
We consider the quotient
\[ \CD_L^{+} / \Gamma_M. \]

For every $n \in \BQ_{<0}$ and $\gamma \in M^{\vee}/M$ the associated Heegner divisor is:
\[ y_{n,\gamma} = \left( \sum_{\substack{ v \in M^{\vee}  \\ \frac{1}{2} v \cdot v = n,\, [v] = \gamma } } v^{\perp} \right) / \Gamma_M. \]
For $n=0$ we define $y_{n,\gamma}$ by the
descent $\CK$ of the tautological line bundle $\CO(-1)$ on $\CD_L$ equipped with the natural $\Gamma_M$-action.
Concretely, we set
\[ y_{0,\gamma} = \begin{cases} c_1(\CK^{\ast}) & \text{ if } \gamma = 0 \\ 0 & \text{ otherwise.} \end{cases} \]
In case $n>0$ we set $y_{n,\gamma} = 0$ in all cases.

Define the formal power series of Heegner divisors
\[ \Phi(q) = \sum_{n \in \BQ_{\leq 0}} \sum_{\gamma \in M^{\vee}/M} y_{n,\gamma} q^{-n} e_{\gamma} \]
which is an element of $\Pic(\CD_L^{+} / \Gamma_M)[[q^{1/N}]] \otimes \BC[M^{\vee}/M]$,
where $e_{\gamma}$ are the elements of the group ring $\BC[M^{\vee}/M]$ indexed by $\gamma$ and
$N$ is the smallest integer for which $M^{\vee}(N)$ is an even lattice.

We recall the modularity result of Borcherds in the formulation of \cite{MP}:

\begin{thm}(\cite{Bor2, McGraw}) The generating series $\Phi(q)$ is the Fourier-expansion of a modular form
of weight $\rank(M)/2$ for the dual of the Weil representation $\rho_M^{\vee}$ of the metaplectic group $\mathrm{Mp}_2(\BZ)$:
\[ \Phi(q) \in \Pic(\CD_L^{+} / \Gamma_M) \otimes \Mod( \mathrm{Mp}_2(\BZ) , \rank(M)/2, \rho_M^{\vee} ). \]
\end{thm}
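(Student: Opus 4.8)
The plan is to deduce the theorem from Borcherds' modularity criterion for formal $q$-series valued in a finite-dimensional vector space, combined with his singular theta lift, which realizes the required linear relations among the Heegner divisors $y_{n,\gamma}$ as divisors of Borcherds products. Throughout I would regard $\Phi(q)$ as taking values in the $\BC$-vector space $W = \Pic(\CD_L^{+}/\Gamma_M)\otimes\BC$, and note that since $\CD_L$ is a type IV domain of signature $(2,\rank(M)-2)$, the target weight $\rank(M)/2$ is exactly the expected one. The first step is to recall the criterion: a formal series $\sum_{n,\gamma} c_{n,\gamma}\, q^{-n} e_{\gamma}$ with coefficients $c_{n,\gamma}\in W$ is the Fourier expansion of a modular form of weight $\rank(M)/2$ and type $\rho_M^{\vee}$ if and only if the $c_{n,\gamma}$ satisfy every linear relation forced on the Fourier coefficients of holomorphic modular forms of this weight and type. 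By Serre duality for vector-valued modular forms, the space of such relations is parametrized by the principal parts of weakly holomorphic modular forms of the complementary weight $2-\rank(M)/2$ and representation $\rho_M$: to such a form $F=\sum_{n,\gamma} a_F(n,\gamma)\, q^{n} e_{\gamma}$ one attaches the relation obtained by pairing its principal part against $\Phi(q)$. It then suffices to verify that the Heegner divisors $y_{n,\gamma}$ satisfy precisely these relations in $W$.

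The key input is Step 2, where the theta lift produces the relations geometrically. For each weakly holomorphic $F$ as above with integral principal part, the regularized theta lift of Borcherds yields a meromorphic automorphic form (a Borcherds product) $\Psi_F$ on $\CD_L^{+}/\Gamma_M$ whose divisor is $\sum_{n<0,\gamma} a_F(n,\gamma)\, y_{n,\gamma}$ and which is a section of $\CK^{\otimes w}$, where $w = a_F(0,0)/2$. Reading off the divisor of $\Psi_F$ gives, in $\Pic(\CD_L^{+}/\Gamma_M)\otimes\BC$, the identity $\sum_{n<0,\gamma} a_F(n,\gamma)\, y_{n,\gamma} = w\cdot c_1(\CK^{\ast}) = w\cdot y_{0,0}$, which is exactly the relation demanded by the criterion in the previous step. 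Here the constant term $a_F(0,0)$ matches the definition of the $n=0$ Heegner divisor $y_{0,\gamma}$ through the tautological bundle $\CK$, so the bookkeeping at $n=0$ is consistent.

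To conclude, I would argue that the relations obtained from all admissible $F$ exhaust the obstruction space of Step 1, so that $\Phi(q)$ satisfies the criterion and is therefore modular of weight $\rank(M)/2$ and type $\rho_M^{\vee}$. The role of \cite{McGraw} is to supply the structure and rationality results for the spaces of vector-valued modular forms attached to $\rho_M$ in arbitrary signature; these guarantee that the Serre duality used in Step 1 holds over the correct field and that the pairing identifying the obstruction space is nondegenerate, which is what allows one to match the relations from the theta lift against those required.

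The hard part will be Step 2, namely the construction of the Borcherds products together with the \emph{exact} computation of their divisors: this requires establishing the convergence of the regularized theta integral and working out the Weyl-chamber combinatorics that pin down the multiplicities $a_F(n,\gamma)$ along each $y_{n,\gamma}$. A secondary obstacle is to confirm that the theta lift surjects onto the entire obstruction space of Step 1, i.e.\ to match the dimension of the space of relations with the image of the lift, which is precisely where the finiteness and rationality statements of \cite{McGraw} enter.
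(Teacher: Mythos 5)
The paper does not prove this theorem: it is quoted (in the formulation of \cite{MP}) from \cite{Bor2}, with \cite{McGraw} supplying the rationality of the spaces of vector-valued modular forms that Borcherds' original argument required, and your proposal reconstructs exactly that cited proof --- the Serre-duality modularity criterion for formal $q$-series together with Borcherds products realizing each required relation in $\Pic(\CD_L^{+}/\Gamma_M)\otimes\BC$. The only slip is bookkeeping at $n=0$: with the paper's conventions ($y_{n,\gamma}$ sums over all $v$, so $\pm v$ both contribute, and $y_{0,0}=c_1(\CK^{\ast})$), the product $\Psi_F$ is a section of $\CK^{\otimes a_F(0,0)/2}$ with divisor $\tfrac{1}{2}\sum_{n<0,\gamma}a_F(n,\gamma)\,y_{n,\gamma}$, which yields precisely $\sum_{n\le 0,\gamma}a_F(n,\gamma)\,y_{n,\gamma}=0$ as the criterion demands, whereas your version (no $\tfrac{1}{2}$, and $c_1(\CK^{\ast})$ where $c_1(\CK)$ is required) is off by a sign and a factor of $2$.
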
 

The modular forms for the dual of the Weil representations can be computed easily
by a Sage program of Brandon Williams \cite{Williams}.

\subsection{Noether-Lefschetz divisors of the second type}
%The Noether-Lefschetz divisors are closely related to the Heegner divisors.
The precise relationship between Noether-Lefschetz and Heegner divisors for arbitrary holomorphic-symplectic varieties
is somewhat painful to state. For once the monodromy group $\Mon^2(X)$ is not known in general, and even if it is known
it usually does not contain $\Gamma_M$ or is contained in $\Gamma_M$.
To simplify the situation we from now on restrict to the case of $K3[n]$-type for $n \geq 2$.
Hence we let
\[ V = E_8(-1)^{\oplus 2} \oplus U^3 \oplus (2-2n) \]
and we fix an identification $V^{\vee}/V = \BZ/(2n-2)\BZ$.

%Recall that for every $K3^{[n]}$-type holomorphic-symplectic variety and element $\beta \in H_2(X,\BZ) = H^2(X,\BZ)^{\vee}$
%we have the residue class $\pm [\beta]$ which is a subset of $\BZ/(2n-2) \BZ$.
We define the Noether-Lefschetz divisors of second type:
\[ \NL_{s,d, \pm r} \in A^1(\CM_L) \]
where $d = (d_1, \ldots , d_{\ell}) \in \BZ^{\ell}$, $s \in \BQ$ and $r \in \BZ/(2n-2)\BZ$ are given.
%Let $L_1, \ldots, L_{\ell}$ be an integral basis of $L$, where $\ell = \rank(L)$.
Consider the intersection matrix of the basis $L_i$:
\[ 
\mathsf{a} 
= \big( a_{ij} \big)_{i,j=1}^{\ell}
= \big( L_i \cdot L_j \big)_{i,j=1}^{\ell}. 
\]
%For a $L$-polarized $(X,j)$ we view $L_i$ as elements in $\Pic(X)$ he map $j$.
We set
%we define the Noether-Lefschetz divisor of the second type in $A^1(\CM_{L})$ as follows:
\[ \Delta(s, d) =
\det \begin{pmatrix}
a & d^t \\
d & s
\end{pmatrix}
= 
\det
\begin{pmatrix}
a_{11} & \cdots & a_{1 \ell} & d_1 \\
\vdots & & \vdots & \vdots \\
a_{\ell 1} & \cdots & a_{\ell \ell} & d_{\ell} \\
d_{1} & \cdots & d_{\ell} & s
\end{pmatrix}
\]

\vspace{5pt}
\noindent 
\textbf{Case: $\Delta(s, d) \neq 0$.} We define
\[ \NL_{s,d, \pm r} = \sum_{L\subset \tilde{L} \subset V} \mu(s,d, r | L\subset \tilde{L} \subset V ) \cdot \NL_{\tilde{L}} \]
where the sum runs over all isomorphism classes of primitive embeddings $L \subset \tilde{L} \subset V$
with $\rank(\tilde{L}) = \ell+1$. The multiplicity\footnote{For this construction
it would be more natural to work with pairs of a holomorphic-symplectic varieties and a primitive embedding
$j : L \to N_1(X)$ into the group of effective $1$-cycles $N_1(X) \subset H_2(X,\BZ)$.
If we then consider a rank $1$ overlattice $L \subset \widetilde{L} \subset N_1(X)$ we define the multiplicity $\mu$ as the number of $\beta \in \widetilde{L}$
such that $\beta \cdot L_i = d_i$, $\beta \cdot \beta = s$, and $[\beta] = \pm r$.
The condition above is more cumbersome but equivalent to this definition.}
$\mu(s,d, r | L\subset \tilde{L} \subset V )$
is the number of elements $\beta \in V^{\vee}$ which are contained in $\widetilde{L} \otimes \BQ$
and satisfy:
\[ \beta \cdot L_i = d_i, \quad \beta \cdot \beta = s, \quad \pm [\beta] = \pm r \text{ in } \BZ/(2n-2). \]
Here we have used the canonical embeddings $V^{\vee} \subset V \otimes \BQ$ and $\widetilde{L} \otimes \BQ \subset V \otimes \BQ$.

\vspace{5pt}
\noindent 
\textbf{Case: $\Delta(s, d) = 0$.} 
In this case any curve class with these invariants has to lie in $L \otimes \BQ$
and is uniquely determined by the degree $d$.
Hence we let $\beta \in L \otimes \BQ$
be the unique class so that $\beta \cdot L_i = d_i$ for all $i$.\footnote{The class is given by $\sum_{i,j=1}^{\ell} d_i (a^{-1})_{ij} L_j$.}
If $\beta$ lies in $V^{\vee}$ and has residue $[\beta] = \pm r$ we define
\[ \NL_{s,d, \pm r} = c_1(\CK^{\vee}), \]
and we define $\NL_{s,d, \pm r} = 0$ otherwise.
%if the unique\footnote{The class is given by $\sum_{i,j=1}^{\ell} d_i (a^{-1})_{ij} L_j$.}
%class $\beta \in L^{\vee} \otimes \BQ$ such that $\beta \cdot L_i$ has lies in $V^{\vee}$ and has residue $[\beta] = \pm r$, and by
%\[ \NL_{s,d_1, \ldots, d_r, \pm r} = 0 \]
%otherwise.

\begin{rmk}
Often the residue set $\pm [\beta]$ of a class $\beta \in H_2(X,\BZ)$ is determined by the degrees
$d_i = \beta \cdot L_i$.
For example, if $L$ contains a class $\ell$ such that
\[ \langle \ell, H^2(X,\BZ) \rangle = (2n-2) \BZ, \quad \langle \ell, H_2(X,\BZ) \rangle = \BZ \]
we may define a natural isomorphism by cupping with $\ell$:
\[ H_2(X,\BZ) / H^2(X,\BZ) \xrightarrow{\cong} \BZ / (2n-2)\BZ, \gamma \mapsto \gamma \cdot \ell. \]
(Not every polarization is of that form, for example the case of double covers of EPW sextics.)
In other cases the residue set is determined by the norm $\beta \cdot \beta$, for example in $K3^{[2]}$-type.
When the residue is determined by $s$ and $d$ we will drop it from the notation of Noether-Lefschetz divisors. \qed
%The Noether-Lefschetz divisors 
%and require the residue to be the one given by $d$ or $s$ respectively.
\end{rmk}

%
%If the fibers of $\pi$ are of $K3^{[n]}$-type we define the Noether-Lefschetz numbers of the family by:
%\[ \NL^{\pi}_{s,d_1, \ldots, d_{\ell}, \pm r} = \int_C \iota_{\pi}^{\ast}\NL^{\pi}_{s,d_1, \ldots, d_{\ell}, \pm r}.  \]

\subsection{Heegner and Noether-Lefschetz divisors}
By the result of Markman, $\Mon(V) \subset O(V)$ is the subgroup
of orientation preserving isometries which act by $\pm \id$ on the discriminant.
Hence we have the inclusion 
$\Gamma_M \subset \Mon(V)_L$ of index $1$ or $2$. This yields the diagram:
\[
\begin{tikzcd} 
& \CD_L^{+} / \Gamma_M \ar{d}{\pi} \\ \CM_{L} \ar{r}{\mathrm{Per}} & \CD_L^{+} / \Mon(V)_L.
\end{tikzcd} \]
where $\pi$ is either an isomorphism or of degree $2$.

Let $C \subset \CM_{L}$ be a complete curve, and define the modular form
\[ \Phi_C(q) = \langle \Phi(q), \pi^{\ast} [\mathrm{Per}(C)] \rangle. \]
We write $\Phi_C[n,\gamma]$ for the coefficient of $q^n e^{\gamma}$ in the Fourier-expansion of $\Phi_C$.
%Given a power series $f(q)$ we will write $f[ n, \gamma ]$ for the coefficient of $q^n e_{\gamma}$.

We will need also:
\[ \widetilde{\Delta}(s, d) := -\frac{1}{2} \cdot \frac{1}{\det(a)} \det \begin{pmatrix}
a & d^t \\
d & s
\end{pmatrix}. \]
The following gives the main connection between the Noether-Lefschetz divisors
of the second type and the Heegner divisors.

\begin{prop}\label{prop:NL}
%Let $\widetilde{\Delta} = \widetilde{\Delta}(d_1, \ldots, d_{\ell}, s)$.
There exists a canonically defined class $\gamma(s,d,r) \in M^{\vee}/M$ 
(abbreviated also by $\gamma(r)$) such that
we have the following:
\begin{enumerate}
\item[(a)] If $\pi$ is an isomorphism,
\[ C \cdot \NL_{s,d, \pm r} = \Phi_C\left[ \widetilde{\Delta}(s,d) , \gamma(r)\right]. \]
\item[(b)] If $\pi$ is of degree $2$,
\[ C \cdot \NL_{s,d, \pm r}
=
\begin{cases}
\frac{1}{2}  \Phi_C\left[ \widetilde{\Delta}(s,d), \gamma(r) \right] & \text{ if } r=-r \\[10pt]
\frac{1}{2} \left( \Phi_C\left[ \widetilde{\Delta}(s,d), \gamma(r) \right] +  \Phi_C\left[ \widetilde{\Delta}(s,d), \gamma(-r) \right] \right) & \text{ otherwise }
\end{cases}
\]
\end{enumerate}
\end{prop}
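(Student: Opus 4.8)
The plan is to push everything to the period domain quotient $\CD_L^+/\Gamma_M$ and match the two sides vector by vector. First I would unwind $\NL_{\widetilde L}$: a marked $L$-quasipolarized variety with period $x\in\CD_L^+\subset\p(M\otimes\BC)$ acquires an extra algebraic class generating an overlattice $\widetilde L\supset L$ of rank $\ell+1$ precisely when $x$ is orthogonal to the $M$-component of a generator of $\widetilde L/L$. Hence, after pulling back along $\mathrm{Per}$ and $\pi$, the divisor $\NL_{\widetilde L}$ is supported on hyperplane sections $v^\perp/\Gamma_M$ with $v\in M^\vee$. For $\beta\in V^\vee$ with $\beta\cdot L_i=d_i$ and $\beta\cdot\beta=s$, I decompose $\beta=\beta_L+\beta_M$ into its orthogonal projections to $L\otimes\BQ$ and $M\otimes\BQ$; one checks $\beta_M\in M^\vee$, that $\beta_L$ (and hence $\beta_L\cdot\beta_L=d\,a^{-1}d^t$) is determined by $d$ alone, and by the Schur-complement identity $\det\!\begin{pmatrix}a&d^t\\ d&s\end{pmatrix}=\det(a)\,(s-d\,a^{-1}d^t)$ that $\widetilde\Delta(s,d)=-\tfrac12\,\beta_M\cdot\beta_M$. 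Thus $\beta_M$ is exactly a vector contributing to the Heegner divisor $y_{n,\gamma}$ with $\tfrac12 v\cdot v=n=-\widetilde\Delta(s,d)$, i.e. to the coefficient $\Phi_C[\widetilde\Delta(s,d),\,\cdot\,]$.

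Next I would pin down the residue and the multiplicity. Set $\gamma(s,d,r):=[\beta_M]\in M^\vee/M$; it is well-defined because, if $\beta,\beta'$ both realize $(s,d,r)$ with the \emph{same} residue $[\beta]=[\beta']=r$ in $V^\vee/V\cong\BZ/(2n-2)$, then $\beta-\beta'\in V$ is orthogonal to $L$, hence $\beta-\beta'\in M$ and $[\beta_M]=[\beta'_M]$. This is a Nikulin gluing statement for $L\oplus M\subset V\subset L^\vee\oplus M^\vee$: fixing the degrees $d$ fixes $[\beta_L]\in L^\vee/L$, and the total residue then constrains $[\beta_M]$. With this, the multiplicity $\mu(s,d,r\mid L\subset\widetilde L\subset V)$ is identified with the number of $v=\beta_M\in M^\vee$ lying in the saturation $\widetilde L$ with $\tfrac12 v\cdot v=-\widetilde\Delta(s,d)$ and $[v]=\gamma(r)$; summing over all overlattices $\widetilde L$ of rank $\ell+1$ recovers precisely the full list of vectors appearing in the Heegner divisor $y_{n,\gamma(r)}$, together with those in $y_{n,\gamma(-r)}$ coming from the $[\beta]=-r$ part of the $\pm r$ condition.

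It remains to intersect with the complete curve $C$ and pull $\Phi$ back along $\pi$. When $\pi$ is an isomorphism, $\Mon(V)_L=\Gamma_M$ and the correspondence is a term-for-term identification; using $y_{n,\gamma}=y_{n,-\gamma}$ (from $v\mapsto-v$, which preserves each hyperplane) one sees that the unordered residue set $\pm r$ is recorded by the single coefficient $\gamma(r)$, giving (a). When $\pi$ has degree $2$, a representative $g$ of the nontrivial class of $\Mon(V)_L/\Gamma_M$ fixes $L$ and acts by $-1$ on the discriminant; hence $g\beta$ again has degrees $d$ but total residue $-r$, and $(g\beta)_M=g\beta_M$ has residue $-\gamma(r)$, so $\gamma(-r)=-\gamma(r)$ and $\pi$ identifies the two Heegner loci labelled $\gamma(r)$ and $\gamma(-r)$. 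Pulling back therefore contributes the degree factor $\tfrac12$ and averages the two residues, collapsing to a single term exactly when $r=-r$; this yields the two branches of (b). Finally, the boundary case $\Delta(s,d)=0$ forces $\beta\in L\otimes\BQ$, $\beta_M=0$ and $n=0$, matching the $n=0$ Heegner divisor $y_{0,\gamma}=c_1(\CK^\ast)$ for $\gamma=0$ and $0$ otherwise, which reproduces the definition of $\NL_{s,d,\pm r}$ in that case.

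The main obstacle is the discriminant-form bookkeeping: proving that $\gamma(s,d,r)$ is canonical and correctly tracking the $\pm r$ residues through the degree-$2$ cover $\pi$, including the factor $\tfrac12$ and the coincidence $r=-r$. Two compatibilities in particular require care: that the abstract residue $[\beta]\in V^\vee/V$ agrees with the curve-class residue of Section~\ref{subsec:curve classes} defined via the Mukai-lattice embedding $\iota_X$, and that the Heegner normalization (the convention $\tfrac12 v\cdot v=n$, the relation $y_{n,\gamma}=y_{n,-\gamma}$, and the $n=0$ term) is matched to the integral multiplicities $\mu$. By contrast, the identity $\widetilde\Delta(s,d)=-\tfrac12\,\beta_M\cdot\beta_M$ and the supporting linear algebra are routine.
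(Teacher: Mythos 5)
Your proposal is correct and shares the paper's overall architecture: reduce the identity to a bijection between the classes $\beta \in V^{\vee}$ indexing $\NL_{s,d,\pm r}$ and the vectors $v \in M^{\vee}$ indexing a Heegner divisor, match norms via the determinant identity (the paper's Lemma~\ref{lemma:linear algebra}, your Schur complement), and then compare the $\Gamma_M$- and $\Mon(V)_L$-quotients via the projection formula to produce the factor $\tfrac{1}{2}$ and the averaging over $\gamma(\pm r)$. Where you genuinely differ is in the implementation of the central comparison (the paper's Lemma~\ref{lemma:comparision}): the paper embeds $V$ primitively into the unimodular Mukai lattice $\Lambda$ with $V^{\perp} = \BZ L_0$, replaces $\beta \in V^{\vee}$ by an integral vector $\widetilde\beta \in \Lambda$ with prescribed degree $d_0 \equiv r$ against $L_0$, and only then projects to $M$; this yields the explicit formula $\gamma = \bigl[\sum_{i,j=0}^{\ell} d_i a^{ij} L_j\bigr] \in \widehat L^{\vee}/\widehat L \cong M^{\vee}/M$, so that $\gamma(s,d,r)$ is defined unconditionally, and the same setup is reused in the proof of Proposition~\ref{prop:NL first and second}. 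You instead work directly inside $V^{\vee} \subset L^{\vee}\oplus M^{\vee}$, set $\gamma(r) := [\beta_M]$, and use the gluing argument ($\beta - \beta' \in M$ for two representatives; conversely $\beta_L + v = \beta + (v-\beta_M) \in V^{\vee}$ whenever $[v]=[\beta_M]$) for well-definedness and surjectivity. Your route is more elementary and avoids the auxiliary lattice, at the cost that $\gamma(s,d,r)$ is defined only when a representative $\beta$ exists, so the claim that it is \emph{canonically} defined needs the convention that both sides vanish in the empty case, which the paper's formula handles automatically; it also does not set up the integral model the paper needs later. Your treatment of the degree-$2$ case and of the degenerate case $\Delta(s,d)=0$ matches the paper's (the paper in fact leaves the degenerate case implicit), and your side claim $\gamma(-r)=-\gamma(r)$ is plausible but not needed: all the argument uses is that a deck transformation of $\pi$ carries the vectors with residue $\gamma(r)$ bijectively to those with residue $\gamma(-r)$.
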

\vspace{7pt}

In $K3^{[2]}$-type, we have $V^{\vee}/V = \BZ_2$
so that $\pi$ is an isomorphism. Moreover, the residue $r$ of any $\beta \in V^{\vee}$ is determined by its norm $s= \beta \cdot \beta$.
Hence omitting $r$ from the notation we find:
\begin{cor} \label{cor:NL_for_K3-2-type}
In $K3^{[2]}$ type, there exists a canonical class $\gamma = \gamma(d,s)$ with
\[ C \cdot \NL_{s,d} = \Phi_C[ \widetilde{\Delta}(s,d), \gamma ]. \]
\end{cor}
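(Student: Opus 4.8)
The plan is to deduce the corollary directly from Proposition~\ref{prop:NL} by specializing to $n=2$. First I would check that the covering map $\pi$ appearing in the diagram of the subsection on Heegner and Noether-Lefschetz divisors is an \emph{isomorphism} in this case, so that part (a) of the proposition applies. This is exactly where the hypothesis $n=2$ enters: the discriminant group is $V^{\vee}/V = \BZ/2\BZ$, and on $\BZ/2\BZ$ multiplication by $-1$ agrees with the identity. Since $\Mon(V)$ consists of the orientation-preserving isometries acting by $\pm\id$ on the discriminant, the subgroup $\Mon(V)_L$ fixing $L$ acts by $\pm\id$ on $M^{\vee}/M$, hence trivially; therefore $\Mon(V)_L = \Gamma_M$, the index is $1$, and $\pi$ is an isomorphism. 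Consequently we are in case (a) of Proposition~\ref{prop:NL}.

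Next I would record that the residue is redundant data. As noted in the subsection on fourfolds, in $K3^{[2]}$-type the residue $r=[\beta]$ of any $\beta \in V^{\vee}$ is determined by its norm $s = \beta\cdot\beta$ through $r \equiv 2s \pmod 2$. Thus, once $s$ (and $d$) are fixed, there is a unique admissible residue $r$, and following the convention stated in the preceding remark one suppresses it and writes $\NL_{s,d} := \NL_{s,d,\pm r}$ for that forced value of $r$.

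With these two observations in place the result is immediate: setting $\gamma(d,s) := \gamma(s,d,r)$ for the canonical class furnished by Proposition~\ref{prop:NL} at the forced residue, part (a) of that proposition gives
\[ C \cdot \NL_{s,d} = \Phi_C\bigl[\widetilde{\Delta}(s,d),\,\gamma(d,s)\bigr], \]
which is the claimed formula. The main (and essentially only) point requiring care is the verification that $\pi$ is an isomorphism rather than a genuine double cover, and this rests on the elementary fact that $\pm 1$ acts identically on $\BZ/2\BZ$; everything else is bookkeeping. I do not anticipate any analytic or geometric obstacle, since all of the substantive content has already been packaged into Proposition~\ref{prop:NL}.
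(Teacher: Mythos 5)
Your overall route is exactly the paper's: deduce the corollary from Proposition~\ref{prop:NL}(a) after checking (i) that $\pi$ is an isomorphism and (ii) that the residue of a class in $V^{\vee}$ is determined by $s$ (and $d$), then take $\gamma(d,s)$ to be the canonical class of the proposition at the forced residue. Your treatment of (ii) and the final bookkeeping are fine, and the conclusion of (i) is indeed true and is what the paper asserts. The problem is your justification of (i) --- the step you yourself single out as the only point requiring care.

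Markman's theorem constrains the monodromy action on the discriminant of $V$, i.e.\ on $V^{\vee}/V$, \emph{not} on $M^{\vee}/M$. The collapse $-\id=+\id$ is valid on $V^{\vee}/V\cong\BZ/2\BZ$, so every element of $\Mon(V)_L$ acts trivially on $V^{\vee}/V$; but $\Gamma_M$ is defined by trivial action on $M^{\vee}/M$, which is a different and in general much larger group --- in the paper's own applications it is $\BZ/3\BZ$ (Fano varieties of cubics) or $\BZ/11\BZ$ (Debarre--Voisin), of odd order, so there $-\id\neq\id$. Consequently your inference ``$\Mon(V)_L$ acts by $\pm\id$ on $M^{\vee}/M$, hence trivially'' is a non sequitur, and even its premise is never established. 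What is actually needed is the implication: an isometry of $V$ fixing $L$ pointwise and acting trivially on $V^{\vee}/V$ restricts on $M$ to an element of $\Gamma_M$. This is precisely the (unproved but asserted) content behind the paper's statement that $\Gamma_M\subset\Mon(V)_L$ has index $1$ or $2$, the index being detected by the sign of the action on $V^{\vee}/V$; establishing it requires the overlattice gluing of $L\oplus M$ inside $V$ (such an isometry is automatically trivial on the glue subgroup, the image of $V/(L\oplus M)$ in $M^{\vee}/M$, and triviality on $V^{\vee}/V$ forces triviality on the rest), or else one simply cites the paper's setup. A useful sanity check that your chain of reasoning is broken: the hypothesis $V^{\vee}/V=\BZ/2\BZ$ does no work in it, since the collapse is applied to $M^{\vee}/M$; if the inference were valid it would make $\pi$ an isomorphism for every $n$, contradicting case (b) of Proposition~\ref{prop:NL}. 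Once the transfer from $V^{\vee}/V$ to $M^{\vee}/M$ is supplied or cited, the rest of your argument coincides with the paper's proof.
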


\vspace{5pt}
\begin{rmk} \label{remark:prop:NL}
In fact, in $K3^{[2]}$ type, the proof below will imply the equality of divisors
\[ \NL_{s,d} = \Phi[ \widetilde{\Delta}(s,d), \gamma ] = y_{-\tilde{\Delta}(s,d), \gamma} \]\
on $\CM_L$, where we have omitted the pullback by the period map $\mathrm{Per}$ on the right hand side.
\end{rmk}

For the proof of Proposition~\ref{prop:NL} we will repeatedly use the following basic linear algebra fact whose proof we skip.
\begin{lemma} \label{lemma:linear algebra}
Consider a $\BR$-vector space $\Lambda$ with inner product $\langle -, - \rangle$ and a orthogonal decomposition $L \oplus M = \Lambda$.
Let $L_i$ be a basis of $L$ with intersection matrix $a_{ij} = L_i \cdot L_j$.
For $\beta \in \Lambda$ with $d_i = \beta \cdot L_i$, let $v = \beta - \sum_{i,j} d_i a^{ij} L_j$ be the projection of $\beta$ onto $M$,
where $a^{ij}$ are the entries of $a^{-1}$.
Then we have
\[ \langle v,v \rangle = \frac{1}{\det(a)} \det \begin{pmatrix} a & d^t \\ d & \langle \beta, \beta \rangle \end{pmatrix}. \]
where $d = (d_1, \ldots, d_{\ell})$.
\end{lemma}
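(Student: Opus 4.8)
The plan is to identify $v$ explicitly as the orthogonal projection of $\beta$ onto $M$ and then compute its square length directly, matching the result against the Schur complement expansion of the bordered determinant on the right-hand side.

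First I would write $\beta = p + v$ with $p \in L$ and $v \in M$; since the decomposition $\Lambda = L \oplus M$ is orthogonal, $v$ is precisely the $M$-component of $\beta$ and satisfies $\langle v, L_i \rangle = 0$ for every $i$. Writing $p = \sum_j c_j L_j$, the orthogonality conditions $\langle \beta - p, L_i \rangle = 0$ read $\sum_j a_{ij} c_j = d_i$ for all $i$; since $L$ is non-degenerate the Gram matrix $a$ is invertible, and the unique solution is $c = a^{-1} d^t$, i.e. $c_j = \sum_i a^{ij} d_i$ using the symmetry of $a$ (hence of $a^{-1}$). This recovers $p = \sum_{i,j} d_i a^{ij} L_j$, so the $v$ in the statement is indeed the $M$-projection of $\beta$.

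Next I would expand, using $\langle\beta,L_j\rangle = d_j$ and $\langle L_j, L_k\rangle = a_{jk}$,
\[ \langle v, v \rangle = \langle \beta, \beta\rangle - 2\langle \beta, p\rangle + \langle p, p\rangle, \]
where $\langle \beta, p\rangle = \sum_j c_j d_j = d\, a^{-1} d^t$ and $\langle p, p\rangle = c^t a c = d\, a^{-1} d^t$ (again by symmetry of $a^{-1}$). The two cross terms therefore combine to give, writing $s = \langle\beta,\beta\rangle$,
\[ \langle v, v\rangle = s - d\, a^{-1} d^t. \]

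Finally I would match this against the determinant. Applying the Schur complement formula to the invertible top-left block $a$ gives
\[ \det \begin{pmatrix} a & d^t \\ d & s\end{pmatrix} = \det(a)\,\bigl(s - d\, a^{-1} d^t\bigr), \]
and dividing by $\det(a)$ reproduces exactly the expression for $\langle v, v\rangle$ found above. There is no serious obstacle in this argument; the only points requiring care are the invertibility of $a$ (guaranteed by the non-degeneracy of $L$) and the consistent use of the symmetry of $a^{-1}$, which is what makes the factor of $2$ cancel and forces the two a priori different scalars $\langle\beta,p\rangle$ and $\langle p,p\rangle$ to coincide.
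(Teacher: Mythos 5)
Your proof is correct, and it matches the argument the paper has in mind: the paper explicitly skips the proof of this lemma, and its source only sketches the same computation, establishing $\det\begin{pmatrix} a & d^t \\ d & s\end{pmatrix} = \det(a)\,(s - d\,a^{-1}d^t)$ by cofactor expansion along the last row and column rather than by citing the Schur complement factorization. Your version, which additionally spells out that $v$ is the orthogonal projection onto $M$ and that $\langle v,v\rangle = s - d\,a^{-1}d^t$, is the same argument in cleaner packaging and is complete as written.
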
 
\begin{comment} 
\begin{proof}
One has that
\[ a^{-1} = \frac{1}{\det(a)} \left( (-1)^{ij} \det(a[i,j]) \right) \]
where we write $a[i,j]$ for the matrix obtained from $a$ by deleting the $i$-th row and $j$-th column.
The result follows by expanding the determinant
\[ \det \begin{pmatrix} a & d \\ d^{t} & \beta \cdot \beta \end{pmatrix} \]
along the last row, and then the resulting minors along the last column.
\end{proof}
\end{comment}

The main step in the proof of the proposition is given by the following lemma:
For fixed $d = (d_1, \ldots, d_{\ell}), s$ and $r \in \BZ_{2n-2}$ consider the divisor on $\CD_L / \Gamma_M$ given by
\[
\NL_{s, d,r} = 
\left( \sum_{\beta} \beta^{\perp} \right)/\Gamma_M
\]
where the sum is over all classes $\beta \in V^{\vee}$ such that
\begin{equation} \beta \cdot \beta = s, \quad \beta \cdot L_i = d_i, i=1,\ldots, \ell, \quad\text{and} \quad [\beta] = r \in V^{\vee}/V. \label{beta condition} \end{equation}
Moreover, $\beta^{\perp}$ stands for the hyperplane in $\p(V)$ orthogonal to $\beta$ intersected with the period domain
$D_{L}$.

\begin{lemma} \label{lemma:comparision}
There exists a canonically defined class $\gamma = \gamma(s,d, r) \in M^{\vee}/M$ such that
\[ \NL_{s, d,r} = y_{n,\gamma} \ \in A^1(\CD_L/\Gamma_M) \]
where $n = \frac{1}{2} \frac{1}{\det(a)} \det \binom{a\ d}{d\ s}$.
\end{lemma}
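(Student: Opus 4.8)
The plan is to reduce the Noether--Lefschetz divisor to the complement lattice $M$ by orthogonal projection, and then to match the resulting Heegner datum through the discriminant-form bookkeeping of the primitive embedding $L\oplus M\hookrightarrow V$.

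First I would decompose. For $\beta\in V^\vee$ write $\beta=\beta_L+v$ with $\beta_L\in L\otimes\BQ$ and $v\in M\otimes\BQ$ the orthogonal projections. The condition $\beta\cdot L_i=d_i$ depends only on $\beta_L$ and pins it down uniquely as $\beta_L=\sum_{i,j}d_i a^{ij}L_j$, which lies in $L^\vee$ because the $d_i$ are integers. Since $\CD_L\subset\p(L^\perp\otimes\BC)=\p(M\otimes\BC)$ and $\beta_L$ pairs trivially with $M\otimes\BC$, the hyperplane $\beta^\perp$ cut out on $\CD_L$ equals $v^\perp$; thus only the projection $v$ matters. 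Next, $v\in M^\vee$, since for $m\in M$ one has $v\cdot m=\beta\cdot m\in\BZ$. By Lemma~\ref{lemma:linear algebra} applied to $\Lambda=V\otimes\BR$ with this decomposition, $v\cdot v=\frac{1}{\det(a)}\det\binom{a\ d}{d\ s}=2n$, so the norm constraint $\beta\cdot\beta=s$ is equivalent to $\tfrac12 v\cdot v=n$. This already identifies the hyperplanes in $\NL_{s,d,r}$ with a subset of those in $y_{n,\gamma}$, provided the residues match.

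The heart of the argument is the residue correspondence, which I would carry out with Nikulin's gluing formalism. The finite overlattice $L\oplus M\subset V$ is encoded by its glue group $\Gamma=V/(L\oplus M)$, which projects isomorphically onto subgroups $H_L\subset L^\vee/L$ and $H_M\subset M^\vee/M$ and is the graph of an anti-isometry $H_L\xrightarrow{\sim}H_M$. Dually, $V^\vee=\{\beta\in L^\vee\oplus M^\vee:([\beta_L],[\beta_M])\in\Gamma^\perp\}$ and $V^\vee/V\cong\Gamma^\perp/\Gamma$ realises the discriminant form of $V$. Since $\beta_L$ is fixed, so is its residue $\lambda=[\beta_L]\in L^\vee/L$, and I claim that requiring $\beta\in V^\vee$ with $[\beta]=r$ then determines $[v]=[\beta_M]$ uniquely. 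Indeed, any two admissible pairs $(\lambda,[v])$ and $(\lambda,[v'])$ in $\Gamma^\perp$ representing the same class $r\in\Gamma^\perp/\Gamma$ differ by $(0,[v]-[v'])\in\Gamma$; as $\Gamma$ is a graph over $H_L$, this forces $[v]=[v']$. This common value is the sought canonical class $\gamma=\gamma(s,d,r)\in M^\vee/M$. Conversely, reversing the computation, every $v\in M^\vee$ with $[v]=\gamma$ produces $\beta=\beta_L+v\in V^\vee$ with $\beta\cdot L_i=d_i$ and $[\beta]=r$; hence projection gives a bijection between the summation set of $\NL_{s,d,r}$ and $\{v\in M^\vee:[v]=\gamma\}$, compatible with the norm constraint by the previous paragraph. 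Finally, $\Gamma_M$ acts trivially on $M^\vee/M$, fixes the $L_i$, and extends to isometries of $V$ fixing $L$ (this is exactly the inclusion $\Gamma_M\subset\Mon(V)_L$ noted earlier), so it permutes the two matched sets compatibly and the bijection descends to $\CD_L/\Gamma_M$; summing the equal hyperplanes $\beta^\perp=v^\perp$ and passing to $\Gamma_M$-quotients yields $\NL_{s,d,r}=y_{n,\gamma}$.

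I expect the main obstacle to be the clean execution of the residue bookkeeping: verifying that the projection $V^\vee\to M^\vee$ is genuinely compatible with the discriminant-form identifications, so that $\gamma$ is canonical (independent of auxiliary choices), and that the degenerate case---where $\lambda$ and $r$ are incompatible, so no admissible $\beta$ exist---is consistently absorbed by the convention that both sides are then zero. Orientation matters (that $v^\perp$ and $\beta^\perp$ define the same divisor respecting the component $\CD_L^{+}$) are routine once the set-theoretic matching is in place.
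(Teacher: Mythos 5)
Your argument is correct, but it reaches the bijection by a genuinely different route than the paper. The paper never manipulates the overlattice $V \supset L \oplus M$ directly: it embeds $V$ primitively into the unimodular Mukai lattice $\Lambda$ with $V^{\perp} = \BZ L_0$, and trades the residue condition $[\beta]=r$ for a degree condition, namely choosing $d_0 \equiv r$ so that the classes $\beta$ in the Noether--Lefschetz sum correspond to honest lattice vectors $\widetilde{\beta} = \beta + \tfrac{d_0}{L_0 \cdot L_0} L_0 \in \Lambda$ with $\widetilde{\beta}\cdot L_i = d_i$ for $i=0,\ldots,\ell$; it then projects inside $\Lambda$ off the saturation $\widehat{L} = M^{\perp} \subset \Lambda$, so that $\gamma$ appears as the explicit class $\bigl[\sum_{i,j=0}^{\ell} d_i a^{ij} L_j\bigr] \in \widehat{L}^{\vee}/\widehat{L} \cong M^{\vee}/M$, whose independence of the choice of $d_0$ must then be checked separately. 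You instead project once, inside $V\otimes\BQ$, and do the residue bookkeeping with Nikulin's glue-group formalism for $V \supset L\oplus M$ (so $V^{\vee}/V \cong \Gamma^{\perp}/\Gamma$ with $\Gamma$ the graph of an anti-isometry $H_L \to H_M$), pinning down $\gamma$ abstractly by the graph property rather than by a formula; the norm matching via Lemma~\ref{lemma:linear algebra} is common to both. Your route is more intrinsic and more general: it needs no unimodular overlattice with rank-one orthogonal complement (so it would transfer verbatim to other deformation types) and involves no auxiliary choice. What the paper's route buys is the explicit formula for $\gamma$ and the Mukai-lattice setup itself, which is reused immediately afterwards: the proof of Proposition~\ref{prop:NL first and second} runs exactly this embedding and explicit saturations $\widetilde{K}_e$ to compute multiplicities, so the two-step version is the one the later computations actually need.

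One caveat, which you partly anticipate but do not resolve: your disposal of the degenerate case is not quite right as stated. If no pair $(\lambda,\mu)\in\Gamma^{\perp}$ representing $r$ exists, then $\gamma$ is undefined by your prescription, and one cannot simply declare that ``both sides are then zero'': the left side $\NL_{s,d,r}$ is indeed the empty sum, but $y_{n,\gamma}$ requires a choice of $\gamma$, and for a stand-in choice it need not vanish (for instance if $M^{\vee}/M = 0$ the only candidate is $\gamma = 0$, and $y_{n,0}$ can be a nonzero Heegner divisor). This is, however, a boundary defect of the lemma itself at incompatible parameters $(s,d,r)$ rather than a gap particular to your argument: the paper's proof has exactly the same issue, since there the candidate projection $\sum_{i,j=0}^{\ell} d_i a^{ij}L_j$ need not lie in $\widehat{L}^{\vee}$, so its $\gamma$ is likewise undefined. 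In every case where the paper's argument goes through, yours does as well.
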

\begin{proof}
In view of the definition of both sides of the claimed equation
it is enough to establish a bijection between
\begin{itemize}
\item[(a)] the set of classes $\beta \in V^{\vee}$ satisfying \eqref{beta condition}, and
\item[(b)] the set of classes $v \in M^{\vee}$ satisfying $v^2 =  \frac{1}{\det(a)} \det \binom{a\ d}{d\ s}$ and $[v]=\gamma$ for an appropriately defined $\gamma$.
\end{itemize}
Consider a primitive embedding $V \subset \Lambda$ into the Mukai lattice
\[ \Lambda = E_8(-1)^{\oplus 2} \oplus U^{\oplus 4}. \]
Let $e,f$ be a symplectic basis of one summand of $U$.
We choose the embedding such that $V^{\perp} = \BZ L_0$ where $L_0 = e+(n-1)f$. % where $e,f$ is a symplectic basis of one of the $U$-summands.
Since $\Lambda$ is unimodular, there exists a canonical isomorphism
\begin{equation} V^{\vee}/V \cong (\BZ L_0)^{\vee} / \BZ L_0 \label{asdasd}\end{equation}
and we may assume that under this isomorphism the class $L_0/(2n-2)$ mod $\BZ L_0$ corresponds to $1 \in \BZ/(2n-2) \BZ$.

\vspace{5pt} 
\noindent \emph{Step 1.} Let $d_0 \in \BZ$ be any integer such that $d_0 \equiv r$ modulo $2n-2$, and let
$\widetilde{s} \in 2 \BZ$ such that
\[ s = \frac{1}{2n-2} \det \begin{pmatrix} 2n-2 & d_0 \\ d_0 & \widetilde{s} \end{pmatrix} \ \Longleftrightarrow \ \widetilde{s} = s + \frac{d_0^2}{2n-2}. \]
(We may assume such $\widetilde{s}$ exists: Otherwise the set in (a) is empty, and by the argument below also the set in (b)).
Then we claim that there exists a bijection between the set in (a) and
\begin{itemize}
\item[(c)] the set of classes $\widetilde{\beta} \in \Lambda$ such that $\widetilde{\beta} \cdot L_i = d_i$ for all $i=0,\ldots, \ell$
and $\widetilde{\beta} \cdot \widetilde{\beta} = \widetilde{s}$.
\end{itemize}
\begin{proof}[Proof of Step 1]
Given $\widetilde{\beta}$ satisfying the conditions in (c) then
\[ \beta = \widetilde{\beta} - \frac{d_0}{2n-2} L_0 \]
lies in $V^{\vee}$. Moreover, $[\beta]$ is the class in $V^{\vee}/V$ corresponding to $d_0/(2n-2)$ in $(\BZ L_0)^{\vee} / \BZ L_0$ under \eqref{asdasd},
hence $[\beta] = r$. Also $\beta \cdot L_i=d_i$ for $i=1,\ldots, \ell$.
The equality $\beta \cdot \beta = s$ is by definition of $\widetilde{s}$ and Lemma~\ref{lemma:linear algebra}. 

Conversely, let $\delta = -e + (n-1) f$ and observe that $L_0 \cdot \delta = 0$ and $L_0 / (2n-2) + \delta/(2n-2) = f$.
Hence, if $\beta$ satisfies (a) then $\beta$ is an element of $d_0 \cdot \frac{\delta}{2n-2} + V$ and hence
\[ \beta \mapsto \beta + \frac{d_0}{2n-2} L_0 \in \Lambda \]
defines the required inverse.
\end{proof}

We consider now the embedding $M \subset \Lambda$ and the orthogonal complement
\[ \widehat{L} = M^{\perp}. \]
Since $\Lambda$ is unimodular, we have an isomorphism
\[ \widehat{L}^{\vee} / \widehat{L} \cong M^{\vee} / M. \]
We specify the class $\gamma$ via this isomorphism. Concretely, we set
\[ \gamma := \left[ \sum_{i,j = 0}^{\ell} d_i a^{ij} L_j \right] \in  \widehat{L}^{\vee} / \widehat{L} \]
where we let $a^{ij}$ denote the entries of the inverse of the extended intersection matrix $\hat{a} = ( L_i \cdot L_j )_{i,j=0, \ldots , \ell}$.

If we replace $d_0$ by $d_0 + (2n-2)$, then since $a^{00} = 1/(2n-2)$ and $a^{0j} = 0$ for $j \neq 0$,
the expression $\sum d_i a^{ij} L_j$ gets replaced by the same expression plus $L_0$.
Hence the class $\gamma$ only depends on $s,(d_1, \ldots, d_{\ell}),r$.

\vspace{5pt} 
\noindent \emph{Step 2.} There exists a bijection between the classes in (c) and (b).
\begin{proof}[Proof of Step 2]
We have the bijection
\[ \widetilde{\beta} \mapsto \widetilde{\beta} - \sum_{i,j=0}^{r} d_i a^{ij} L_j \in M^{\vee}. \]
\end{proof}
Combining Step 1 and 2 finished the proof of the lemma.
\end{proof}

\begin{proof}[Proof of Proposition~\ref{prop:NL}]
By definition we have:
\[ \NL_{s,d_1, \ldots, d_r, \pm r} = \mathrm{Per}^{\ast}\left[  \left( \sum_{\beta} \beta^{\perp} \right) / \Mon(V)_L \right] \]
where the sum is over all $\beta \in V^{\vee}$ such that
\[ \beta \cdot \beta = s, \quad \beta \cdot L_i = d_i, \quad [\beta ] = \pm r. \]
Hence if $\pi$ is an isomorphism, the result follows from this by Lemma~\ref{lemma:comparision}.

Hence assume now $\pi$ is of degree $2$, and let $\widetilde{C} = \mathrm{Per}(C)$.

If $-r = r$, then the morphism $\NL_{s,d, r} \to \NL_{s,d, \pm r}$ given by restriction of $\pi$ is of degree $2$.
Therefore
\[
C \cdot \NL_{s,d, \pm r} = \frac{1}{2} \widetilde{C} \cdot \pi_{\ast} \NL_{s,d, r} =
\frac{1}{2} \pi^{\ast}[\widetilde{C}] \cdot   \NL_{s,d, r}
\]
which then implies the claim by Lemma~\ref{lemma:comparision}.
If $r \neq -r$, then we have $\pi_{\ast}  \NL_{s,d, r} =  \NL_{s,d, \pm r}$
% and
%\[ \pi^{\ast}  \NL_{s,d, \pm r} =  \NL_{s,d, r} +  \NL_{s,d, -r} \]
from which the result follows.
\end{proof}

\subsection{Noether-Lefschetz numbers}
Let $(\CX, \CL_1, \ldots, \CL_{\ell}, \pi)$ be a $1$-parameter family of $L$-quasipolarized holomorphic-symplectic varieties of $K3^{[n]}$-type.
We have the associated classifying morphism
\[ \iota_{\pi} : C \to \CM_{L}. \]
We define the Noether-Lefschetz numbers of the family by
\[ \NL^{\pi}_{s,d, \pm r} = \int_C \iota_{\pi}^{\ast}\NL^{\pi}_{s,d, \pm r}.  \]
Intuitively, the Noether-Lefschetz numbers are the number of fibers of $\pi$
for which there exists a Hodge class $\beta$ with prescribed norm $\beta \cdot \beta = s$, degree $\beta \cdot L_i = d_i$ and residue $[\beta ] = \pm r$.

In $K3^{[2]}$-type we will also often write $\Phi^{\pi}(q) = \iota_{\pi}^{\ast} \Phi(q)$.

The families of holomorphic-symplectic varieties we will encounter in geometric constructions
often come with mildly singular fibers.
The definition of Noether-Lefschetz numbers can be extended to these families as follows.
Let $\pi: \CX \to C$ be a projective flat morphism to a smooth curve and let $\CL_{1}, \ldots, \CL_\ell \in \Pic(\CX)$.
We assume that over a non-empty open subset of $C$ this defines a $1$-parameter family of $L$-quasipolarized holomorphic-symplectic varieties of $K3^{[n]}$ type.
We also assume that around every singular point the monodromy is finite.
Then there exists a cover
\[ f:\widetilde{C} \to C \]
such that the pullback family $f^{\ast} \CX \to \widetilde{C}$ is bimeromorphic to a
$1$-parameter family of $L$-quasipolarized holomorphic-symplectic varieties of $K3^{[n]}$-type,
\[ \widetilde{\pi} : \tilde{\CX} \to \widetilde{C}. \]
See for example \cite{KLSV}.
Concretely, around each basepoint of a singular fiber, after a cover that trivializes the monodromy,
the rational map $C \dashrightarrow \CM_L$ can be extended.
(In the examples we will consider, we can construct the cover $\tilde{C} \to C$ and the birational model $\tilde{\CX}$ explicitly).
We define the Noether-Lefschetz numbers of $\pi$ by:
%If the cover $\tilde{C} \to C$ is of degree $k$ we define
\[ \NL^{\pi}_{s,d,\pm r} := \frac{1}{k} \NL^{\widetilde{\pi}}_{s,d,\pm} \]
where $k$ is the degree of the cover $\widetilde{C} \to C$.
Since the Noether-Lefschetz divisors are pulled back from the separated period domain,
the definition is independent of the choice of cover.

\subsection{Example: Prime discriminant in $K3^{[2]}$-type} \label{ex: prime discriminant}
Let
\[ V = E_8(-1)^{\oplus 2} \oplus U^{\oplus 3} \oplus \BZ \delta, \quad \delta^2 = -2 \]
be the $K3^{[2]}$-lattice and consider a primitive vector $H \in V$ satisfying
\begin{itemize}
\item $H \cdot H = 2p$ for a prime $p$ with $p \equiv 3$ mod $4$,
\item $\langle H, V \rangle = 2 \BZ$.
\end{itemize}
Equivalently, $H/2$ defines a primitive vector in $V^{\vee}$ and has norm $p/2$.
By Eichler's criterion \cite[Lemma 3.5]{GHS} there exists a unique $O(V)$ orbit of vectors $H$ satisfying these condition.
To be concrete we choose
\[ H = 2\left( e' + \frac{p+1}{4} f' \right) + \delta \]
where $e',f'$ is a basis of one of the summands $U$. In $V$ one then has
\[ M = H^{\perp} \cong E_8(-1)^{\oplus 2} \oplus U^{\oplus 2} \oplus \begin{pmatrix} -2 & -1 \\ -1 & -\frac{p+1}{2} \end{pmatrix}, \]
a lattice of discriminant group $\BZ/p \BZ$. % see also \cite{DM}.

We consider $H$-quasipolarized holomorphic-symplectic varieties $X$ of $K3^{[2]}$-type.
Examples are the Fano varieties of lines ($p=3$) or the Debarre-Voisin fourfolds ($p=11$), see below.
For these varieties the Borcherds modular forms and the relationship between between Noether-Lefschetz divisors of first and second type can be described very explicitly.

\subsubsection{The Borcherds modular forms} \label{subsubsec:Borcherds form}
Consider the series of Noether-Lefschetz numbers of second type
\[ \Phi^{\pi}(q) 
%=  \langle \Phi(q), \iota_{\pi \ast}[C] \rangle 
= \sum_{\gamma} \Phi^{\pi}_{\gamma}(q) e_{\gamma} \]
for a $1$-parameter family $\pi$ of holomorphic-symplectic varieties of this polarization type.
This is a modular form of weight $11$ for the Weil representation on $M^{\vee}/M$.
The space of such forms is easily computed through \cite{Williams} and the first values are 
given in the following table. %one finds that the space of modular forms of this type has dimensions given by the following table.

\begin{table}[ht] 
\label{table:dimension}
{\renewcommand{\arraystretch}{1.5}\begin{tabular}{| c | c | c | c | c | c | c | c | c | c | c |}
\hline
$\!p$\! & $3$ & $7$ & $11$ & $19$ & $23$ \\
\hline
$\dim$ & $2$ & $4$ & $6$ & $9$ & $12$ \\
\hline
\end{tabular}}
\vspace{8pt}
\caption{The dimension of the space of modular forms of weight $11$ for the Weil representation associated to $M$.} \label{uniruled_table}
%\vspace{-15pt}
\end{table}

%\sum_{\gamma \in M^{\vee}/M} \sum_{D} \Phi_{\gamma,D} q^{D/p} 
If we write $y_1, y_2$ for the standard basis of the lattice $\begin{pmatrix} -2 & -1 \\ -1 & -\frac{p+1}{2} \end{pmatrix}$,
then the discrimimant of $M$ is generated by
\[ y' = \frac{1}{p} (2 y_2-y_1) \]
which has norm $y' \cdot y' = -2/p$. 
Hence for any element $v$ of $M^{\vee}$, written as
\[ v = w + k y' \in M^{\vee}, \quad w \in M, k \in \BZ, \]
we have $-\frac{1}{2} p v \cdot v = k^2$ modulo $p$.
In particular, this determines $[v] \in \BZ/p\BZ$ up to multiplication by $\pm 1$.
Thus for any $v \in M^{\vee}$ we see that:
\begin{enumerate}
\item[(i)] $D := -\frac{p}{2} v \cdot v$ is a square modulo $p$, and 
\item[(ii)] $r=[v]$ is determined from $D$ via $r^2 \equiv D$ mod $p$,
up to multiplication by $\pm 1$.
\end{enumerate}
By the redundancy of Heegner divisors $y_{n,\gamma} = y_{n,-\gamma}$, the coefficient
$q^n e_{\gamma}$ of $\Phi(q)$ is thus determined by $n$ alone.
It is hence enough to consider
\begin{equation} \varphi^{\pi}(q) = \frac{1}{2} \Phi^{\pi}_0(q) + \frac{1}{2} \sum_{\gamma \in M^{\vee}/M} \Phi^{\pi}_{\gamma}(q). \label{phi series} \end{equation}

Let $\chi_{p}$ be the Dirichlet character given by the Legendre symbol $\left( \frac{ \cdot }{p} \right)$.
\begin{prop} \label{prop:phi modularity} The series
$\Phi^{\pi}_0(q)$ and $\sum_{\gamma \in M^{\vee}/M} \Phi^{\pi}_{\gamma}(q^p)$
are modular forms of weight $11$ and character $\chi_p$ for the congruence subgroup $\Gamma_0(p)$.
\end{prop}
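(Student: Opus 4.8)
The plan is to exploit the standard dictionary between vector-valued modular forms for the Weil representation of a lattice with cyclic discriminant group of prime order and scalar modular forms on $\Gamma_0(p)$ twisted by the quadratic character $\chi_p$. Write $e(x)=e^{2\pi i x}$ and abbreviate the components of $\Phi^\pi = \sum_\gamma \Phi^\pi_\gamma e_\gamma$ by $\Phi^\pi_k := \Phi^\pi_{k y'}$ for $k\in\BZ/p\BZ$; recall that $\Phi^\pi$ is a modular form of weight $11$ for $\rho_M^\vee$, that $M^\vee/M=\BZ/p\BZ$ with $\tfrac12(ky')\cdot(ky') = -k^2/p$, and that $\Phi^\pi_k=\Phi^\pi_{-k}$. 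On $\BC[M^\vee/M]$ the generators of $\mathrm{Mp}_2(\BZ)$ act by $\rho_M^\vee(T)e_k = e(k^2/p)e_k$ and $\rho_M^\vee(S)e_k = \tfrac{-i}{\sqrt p}\sum_l e(-2kl/p)e_l$, the sign $-i$ coming from the signature $(2,20)$ of $M$. Since $\rank M = 22$ is even, the weight $11$ is an integer and $\rho_M^\vee$ is an honest representation of $\mathrm{SL}_2(\BZ)$ with automorphy factor $(c\tau+d)^{11}$ (no metaplectic square roots). As $\rho_M^\vee(T)^p=\mathrm{id}$ on $\BC[M^\vee/M]$, the representation has level $p$ and factors through $\mathrm{SL}_2(\BZ/p\BZ)$, up to the fixed quadratic twist forced by the signature $\equiv 2 \pmod 4$.

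For the first assertion I verify the transformation of $\Phi^\pi_0$ directly on the image of $\Gamma_0(p)$ in $\mathrm{SL}_2(\BZ/p\BZ)$, which is the upper-triangular Borel $B$, generated by the unipotents $T^b$, the torus $\mathrm{diag}(a,a^{-1})$, and $-I$. The unipotent acts on $e_k$ by the phase $e(bk^2/p)$ and hence fixes $e_0$; the torus acts by the oscillator formula $\mathrm{diag}(a,a^{-1})\cdot e_k = \left(\tfrac{a}{p}\right) e_{ak}$, hence fixes $e_0$ with eigenvalue the Legendre symbol $\left(\tfrac{a}{p}\right)$. Consequently, for $\gamma=\begin{pmatrix} a & b \\ c & d\end{pmatrix}\in\Gamma_0(p)$ (so $p\mid c$) the $e_0$-row of $\rho_M^\vee(\gamma)$ is supported in the $e_0$-slot with entry $\left(\tfrac{d}{p}\right)=\chi_p(d)$, using $ad\equiv 1 \bmod p$. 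Reading off the $e_0$-component of $\Phi^\pi(\gamma\tau) = (c\tau+d)^{11}\rho_M^\vee(\gamma)\Phi^\pi(\tau)$ then yields $\Phi^\pi_0(\gamma\tau) = \chi_p(d)(c\tau+d)^{11}\Phi^\pi_0(\tau)$. Holomorphy at the cusp $\infty$ is immediate since the $q$-expansion of $\Phi^\pi_0$ has only non-negative exponents, while holomorphy at $0$ follows from the Fricke computation below. Hence $\Phi^\pi_0$ is a modular form of weight $11$ and character $\chi_p$ for $\Gamma_0(p)$.

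For the second assertion I pass through the Fricke involution $W_p=\begin{pmatrix} 0 & -1 \\ p & 0\end{pmatrix}$. Extracting the $e_0$-component of $\Phi^\pi(-1/\tau)=\tau^{11}\rho_M^\vee(S)\Phi^\pi(\tau)$ and using that the $e_0$-row of $\rho_M^\vee(S)$ is the constant $\tfrac{-i}{\sqrt p}$ gives $\Phi^\pi_0(-1/\tau)=\tfrac{-i}{\sqrt p}\,\tau^{11}\sum_k \Phi^\pi_k(\tau)$; replacing $\tau$ by $p\tau$ and unwinding the weight-$11$ slash produces $\Phi^\pi_0\mid_{11} W_p = (-i)\,p^{5}\sum_k \Phi^\pi_k(p\tau)$. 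Since $W_p$ normalizes $\Gamma_0(p)$ and conjugation sends $\begin{pmatrix} a & b \\ c & d\end{pmatrix}$ with $p\mid c$ to a matrix again in $\Gamma_0(p)$ with the same value of $\chi_p$ on its lower-right entry, and since $\chi_p$ is real, the Fricke image of a weight-$11$ form with character $\chi_p$ on $\Gamma_0(p)$ is again such a form. As $\sum_\gamma \Phi^\pi_\gamma(q^p) = \sum_k \Phi^\pi_k(p\tau)$ equals $\Phi^\pi_0\mid_{11}W_p$ up to the nonzero constant $(-i)p^5$, it is therefore modular of weight $11$ and character $\chi_p$ for $\Gamma_0(p)$; this also supplies the holomorphy of $\Phi^\pi_0$ at the cusp $0$ invoked above.

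I expect the only genuinely delicate point to be the precise determination of the scalar by which the torus acts on $e_0$: a priori the oscillator computation produces the Legendre symbol times a fourth root of unity coming from the signature-dependent factor $e((b^- - b^+)/8)=i$, and one must check by a Gauss-sum and quadratic-reciprocity argument, normalized by $\rho_M^\vee(I)=\mathrm{id}$, that this global constant is trivial on $B$, leaving exactly $\chi_p$. Everything else — the $T$-invariance, the level-$p$ factorization through $\mathrm{SL}_2(\BZ/p\BZ)$, the Fricke relation, and the cusp analysis — is routine.
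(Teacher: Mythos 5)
Your route is genuinely different from the paper's: the paper disposes of this proposition by citation alone (the modularity of $\Phi^{\pi}_0$ is quoted as "well-known" from Borcherds, and the modularity of $\sum_{\gamma} \Phi^{\pi}_{\gamma}(q^p)$ as one direction of the Bruinier--Bundschuh isomorphism), whereas you derive both claims directly from the vector-valued functional equation for $\rho_M^{\vee}$. Your second step is correct and, granting the first, complete: extracting the $e_0$-row of the $S$-transformation gives $\Phi^{\pi}_0|_{11}W_p = -i\,p^{5}\sum_k \Phi^{\pi}_k(p\tau)$, the Fricke involution normalizes $\Gamma_0(p)$ and preserves the (real) character $\chi_p$, and the two $q$-expansions settle holomorphy at both cusps of $\Gamma_0(p)$ without circularity. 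This is in substance how Bruinier and Bundschuh pass between the two objects, so this part of your write-up makes the paper's citation transparent.

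The gap is in the first step, and you flag it yourself: you never verify that the scalar by which $\Gamma_0(p)$ acts on the $e_0$-slot is exactly $\chi_p(d)$ with no residual root of unity, and that identification is the entire content of the first assertion. Worse, your fallback phrasing --- that $\rho_M^{\vee}$ "factors through $\mathrm{SL}_2(\BZ/p\BZ)$ up to the fixed quadratic twist" --- does not actually save the argument: the nontrivial quadratic character of $\mathrm{SL}_2(\BZ)$ takes the value $(-1)^p=-1$ on $T^p\in\Gamma(p)$, so a genuinely twisted representation would not factor through $\mathrm{SL}_2(\BZ/p\BZ)$ at all, the action of $\gamma\in\Gamma_0(p)$ would not be determined by its image in the Borel, and your reduction to an oscillator computation on $B$ would collapse; moreover the resulting multiplier would then not equal $\chi_p$, contradicting the statement you are proving. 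What is true (for even signature and odd prime level) is that $\Gamma(p)$ acts trivially, i.e.\ the factorization is exact, but proving this together with the identification of the Borel character as $\left(\frac{a}{p}\right)=\chi_p(d)$ is precisely the Gauss-sum/reciprocity computation you defer. To close the gap, either carry out that computation or invoke the explicit formula for the Weil representation on $\Gamma_0(N)$ (Scheithauer's formula, or the Borcherds result the paper cites), which gives $\rho_M^{\vee}(\gamma)e_0 = \left(\frac{a}{p}\right)e_0$ for $\gamma=\begin{pmatrix} a & b \\ c & d\end{pmatrix}\in\Gamma_0(p)$; with that input the rest of your argument goes through.
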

\begin{proof}
The modularity of the first series is well-known \cite{Borcherds}.
The second is one direction of the Bruinier-Bundschuh isomorphism \cite{BruiBund}.
\end{proof}

The generators of the ring of modular forms for the character $\chi_p$ is easily computable (see e.g. \cite[Sec.12]{Borcherds})
which yields explicit formulas for $\varphi^\pi$.
One example for Fano varieties can be found in \cite{LZ}. We will consider the case of Debarre-Voisin fourfolds below.

Finally, the Noether-Lefschetz numbers of the family are given by:
\[ 
\NL^{\pi}_{s,d} = \varphi^{\pi}\left[ -\frac{1}{4p} \det \begin{pmatrix} 2p & d \\ d & s \end{pmatrix} \right].
\]

\subsubsection{Noether-Lefschetz divisors the first type} \label{subsec:NL divisors first type}
The relationship between Noether-Lefschetz divisors of the first and second type is
not so easy to state in general. However, here the situation simplifies.
For any $w \in H^{\perp} \subset V$ we consider the intersection of $w^{\perp}$ with the period domain $\CD_H$,
\[ \CD_{w^{\perp}} = \{ x \in \CD_{H} | \langle x, w \rangle = 0 \}. \]
The image of this divisor under the quotient map $\CD_{H} \to \CD_{H}/\Gamma_{M}$
defines an irreducible divisor that by a result of Debarre and Macr\`i \cite{DM}
only depends on the discriminant
\[ -2e := \mathrm{disc}( w^{\perp} \subset M ). \]
Moreover, $e$ is a square modulo $p$.
We write $\CC_{2e}$ for this divisor. %, as well as its pullback to the moduli space of $H$-quasipolarized holomorphic-symplectic varieties under the period map.

The relationship between Noether-Lefschetz divisors of first and second type is given as follows:
\begin{prop} \label{prop:NL first and second}
Let $D \geq 1$ be a square modulo $p$, and let $\alpha \in \BZ/p \BZ$ such that $\alpha^2 \equiv D$ mod $p$.
The associated Heegner divisor $y_{-D/p,\alpha}$, denoted also by $\NL(D)$, is given by
\[ 
\NL(D)
= \sum_{\substack{a_0 \geq 0, k \in \{ 0, \ldots, \floor{\frac{p}{2}} \} \\
e = p a_0 + k^2 \geq 1}}
\left| \left\{ c \in \BZ\, \middle|\, c^2 = \frac{D}{e},\ kc \equiv \alpha \textup{ mod } p \right\} \right| \CC_{2e}
\]
\end{prop}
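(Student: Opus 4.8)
The plan is to compute both sides on the quotient $\CD_L/\Gamma_M$ by decomposing the Heegner divisor $\NL(D) = y_{-D/p,\alpha}$ into its irreducible components. By definition $y_{-D/p,\alpha} = \big(\sum_v v^\perp\big)/\Gamma_M$, where $v$ runs over all $v \in M^\vee$ with $\tfrac12 v\cdot v = -D/p$ and $[v] = \alpha$. Since the hyperplane $v^\perp \cap \CD_L$ depends only on the line $\BQ v$, I would first attach to each such $v$ the primitive vector $w \in M$ generating $\BQ v \cap M$. As $M^\vee/M \cong \BZ/p\BZ$ is cyclic of prime order, the divisibility $\delta := \div_M(w)$ is either $1$ or $p$; setting $w^\ast := w/\delta \in M^\vee$, the primitive vector of $M^\vee$ on the ray, every $v$ on the ray is $v = c\,w^\ast$ for a unique $c \in \BZ$.

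The second step is the lattice bookkeeping that names the component. Applying the glue-group computation to the finite-index sublattice $\BZ w \oplus w^\perp \subset M$ (whose index is $|w\cdot w|/\delta$, using that $|\mathrm{disc}(M)| = p$) gives
\[ |\mathrm{disc}(w^\perp)| = \frac{|w\cdot w|\,p}{\delta^2}, \]
and since $w$ has negative norm the discriminant is negative, so $\mathrm{disc}(w^\perp) = -2e$ with $e = -(w\cdot w)p/(2\delta^2)$; equivalently $w^\ast\cdot w^\ast = -2e/p$. By the result of Debarre--Macr\`i \cite{DM} the image of $w^\perp$ in $\CD_L/\Gamma_M$ is then exactly the irreducible divisor $\CC_{2e}$. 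The two conditions on $v$ translate cleanly: from $v\cdot v = c^2(w^\ast\cdot w^\ast) = -2ec^2/p = -2D/p$ I obtain $c^2 = D/e$, and writing $k := [w^\ast] \in \{0,\dots,\floor{p/2}\}$, after fixing the sign of $w$ so that the residue lies in this range, the condition $[v]=\alpha$ becomes $kc \equiv \alpha \bmod p$. Finally the residue identity $-\tfrac{p}{2}\,w^\ast\cdot w^\ast \equiv [w^\ast]^2 \bmod p$ recorded above gives $e \equiv k^2 \bmod p$, so $e = p a_0 + k^2$ with $a_0 = (e-k^2)/p \geq 0$, which is precisely the index set of the sum.

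The last step assembles the multiplicities. Since $\Gamma_M$ acts trivially on $M^\vee/M$, it preserves the norm and residue conditions, and by \cite{DM} there is a single $\Gamma_M$-orbit (up to sign) of primitive vectors of a given discriminant $-2e$, so each value of $e$ contributes the one component $\CC_{2e}$. Its generic multiplicity in $\sum_v v^\perp$ I would read off at a very general point $x \in w^\perp$: the only classes of $M^\vee$ orthogonal to such $x$ lie on $\BQ w$, so the multiplicity equals the number of admissible $v$ on that single ray, namely $|\{c \in \BZ : c^2 = D/e,\ kc \equiv \alpha \bmod p\}|$. Summing over $e$, equivalently over the pairs $(a_0,k)$ with $e = pa_0+k^2 \geq 1$, then yields the stated formula. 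I expect the main obstacle to be keeping the two pieces of bookkeeping consistent: the discriminant/divisibility computation relating $e$ to $(w\cdot w,\delta)$, and the multiplicity count, where the redundancy $y_{n,\gamma} = y_{n,-\gamma}$ and the choice of representative $k \in \{0,\dots,\floor{p/2}\}$ must be matched against the unique-orbit statement of \cite{DM} so that no component is over- or under-counted.
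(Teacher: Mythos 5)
Your proof takes a genuinely different route from the paper's. The paper never decomposes the Heegner divisor directly: it fixes $(s,d)$ with $D = -\frac14\det\binom{2p\ d}{d\ s}$, identifies $\NL(D)$ with the Noether--Lefschetz divisor of the second type $\NL_{s,d}$ via Proposition~\ref{prop:NL} and Remark~\ref{remark:prop:NL}, expands $\NL_{s,d} = \sum_e \mu(K_e,s,d)\,\CC_{2e}$ by definition, and then evaluates each multiplicity by embedding $V$ into the Mukai lattice and counting vectors in an explicit saturated rank-three lattice $\widetilde K_e$, in the style of Lemma~\ref{lemma:comparision}. You instead work entirely on $\CD_L/\Gamma_M$ with the definition of $y_{-D/p,\alpha}$: you group the $v\in M^{\vee}$ by the rays they span, label the component attached to a ray through the discriminant computation plus \cite{DM}, and read off the multiplicity along a single ray at a very general point of $w^{\perp}$. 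Your individual steps are sound: $\div_M(w)\in\{1,p\}$ because $M^{\vee}/M$ has prime order; the bookkeeping $w^{\ast}\cdot w^{\ast} = -2e/p$; the translation of the two Heegner conditions on $v = c\,w^{\ast}$ into $c^2 = D/e$ and $kc\equiv\alpha \pmod p$; and the multiplicity count (rays with the same $e$ are $\Gamma_M$-equivalent, so nothing is double counted). This is shorter than the paper's argument and needs no Mukai lattice; what the paper's detour buys is uniformity with the machinery on which the rest of Section~\ref{sec:NL theory} is built.

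However, one step is asserted rather than proved, and it is in fact false: from $e\equiv k^2 \pmod p$ you conclude $a_0 = (e-k^2)/p \geq 0$, but the congruence only gives $a_0\in\BZ$. Concretely, for $p=11$ choose $m$ of square $4$ in a hyperbolic summand $U\subset M$ and set $u = m + 5y' \in M^{\vee}$ (notation of Section~\ref{ex: prime discriminant}); then $u$ is primitive in $M^{\vee}$, $u\cdot u = 4 - \tfrac{50}{11} = -\tfrac{6}{11}$, so its ray carries the label $e=3$ with $k=5$ and $a_0 = -2$, and the corresponding nonempty irreducible divisor $\CC_6$ occurs in $\NL(3)$ with multiplicity one, even though $(a_0,k)=(-2,5)$ is excluded by the printed index set. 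So your decomposition actually proves the formula with the sum running over \emph{all} integers $e\geq 1$ that are squares modulo $p$, and the final sentence matching this to the constraint $a_0\geq 0$ is exactly where the argument breaks. To be fair, the defect sits in the statement rather than in your method: the paper's own proof parametrizes the lattices $K_e$ by the same restricted index set, while the remark immediately following the proposition ($\NL(D) = \CC_{2D} + \ldots$ for every $D$) and the proof of Corollary~\ref{corHLS} --- which for $p=11$ needs $\NL(3)=\CC_6$ and $\NL(5)=\CC_{10}$, where $3 = 11\cdot(-2)+5^2$ and $5 = 11\cdot(-1)+4^2$ --- use precisely the unrestricted version. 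With the inequality claim deleted (allow $a_0\in\BZ$ subject to $e = pa_0+k^2\geq 1$), your argument is a correct, and arguably cleaner, proof of the statement the paper actually uses.
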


\vspace{5pt}
In particular, we have
\[
\NL(D) =
\begin{cases}
\CC_{2D} + \ldots & \text{ if } D \neq 0 \text{ mod } 11 \\
2 \CC_{2D} + \ldots & \text{ if } D = 0 \text{ mod } 11
\end{cases}
\]
where $\ldots$ stands for terms $\CC_{2e}$ with $e<D$.
This shows that the Noether-Lefschetz divisors of the first type are related to the Heegner divisors by an invertible upper triangular matrx.
%This shows that Noether-Lefschetz divisors of the first type and the Heegner divisors determine each other.
If $D$ is square free, then $\NL(D)$ and $\CC_{2D}$ agree up to a constant.

\begin{proof}
For any positive $e = p a_0 + k^2$ with $k \in \{ 0, 1 \ldots, \floor{\frac{p}{2}} \}$ and $a \geq 1$
we choose a lattice $K_e \subset V$ containing $H$ and such that $\mathrm{disc}(K_e^{\perp}) = -2e$.
The lattice is unique up to an automorphism of $V$ that fixes $H$ \cite{DM}.
Fix $s \in \frac{1}{2} \BZ$ with $2s \equiv 3 (4)$ and $d \geq 1$ such that $D = -\frac{1}{4} \det\binom{2p\ d}{d\ s} = \frac{1}{4}( d^2 - 2ps )$.
Then by Proposition~\ref{prop:NL}, Remark~\ref{remark:prop:NL} and the definition we have
\[ \NL(D) = \NL_{s,d} = \sum_{e} \mu(K_e, s, d) \CC_{2e} \]
where the multiplicity is given by
\begin{equation}  \mu(K_e, s, d) = | \{ \beta \in K_e \otimes \BQ | \beta \in V^{\vee}, \beta \cdot \beta = s, \beta \cdot H = d \} |. \label{dadg3} \end{equation}

It remains to calculate the multiplicty.
We first embed $V$ into the Mukai lattice $\Lambda$ as the orthogonal of $e+f$ such that $\delta = -e+f$.
Here $e,f$ is a symplectic basis of a not previously used copy of $U$.
One finds that
\[ \widehat{L} = (M^{\perp} \subset \Lambda) \cong \begin{pmatrix} 2 & 1 \\ 1 & \frac{p+1}{2} \end{pmatrix} \]
which has the integral basis
\[ x_1 = e+f, \quad x_2 = e' + \frac{p+1}{4} f' + f. \]
%The discriminant group of $\widehat{L}$ is generated by $\frac{2 x_2-x_1}{p} = \frac{1}{p} H$.
Let us next choose
\[ K_e = \BZ H \oplus \BZ (k f' + e'' - a_0 f'') \]
where $e'', f''$ is a symplectic basis of a third copy of $U$.
%The lattice $K_e^{\perp}$ is then isomorphic to
%\[ K_e^{\perp} \cong E_8(-1)^{\oplus 2} \oplus U \oplus \begin{pmatrix} -2 & 1 & 0 \\ 1 & -\frac{p+1}{2} & -k \\ 0 & -k & 2 a_0 \end{pmatrix} \]
%where the basis of the last summand is
%\[ z_1 = -\delta - f', \quad z_2 = e'-\frac{p+1}{4}f' - k f'', \quad z_3 = e'' + a_0 f''. \]
The saturation of $K_e \oplus \BZ (e+f)$ inside $\Lambda$ is then given by
\[
\widetilde{K}_e \cong
\begin{pmatrix} 2 & 1 & 0 \\ 1 & \frac{p+1}{2} & k \\ 0 & k & -2 a_0 \end{pmatrix}
\]
where the lattice is generated by $x_1, x_2$ and $x_3 = k f' + e'' - a_0 f''$.

We follow the recipe of the proof of
Lemma~\ref{lemma:comparision}, that is we compare the multiplicity \eqref{dadg3}
with a simpler multiplicity for $\tilde{K}_e$.
If $s \in 2 \BZ$, then for $D$ to be an integer, we must have $d$ even. Then as in Lemma~\ref{lemma:comparision} one gets:
\[
\mu(K_e, s, d)
=
\left| \left\{ \beta \in \widetilde{K}_e \middle| \beta \cdot x_1 = 0,\ \beta \cdot x_2 = \frac{d}{2}, \ \beta \cdot \beta = s \right\} \right|
\]
If $s+\frac{1}{2} \in \BZ$, then $d$ is odd and
\[ 
\mu(K_e, s, d)
=
\left| \left\{ \beta \in \widetilde{K}_e \middle| \beta \cdot x_1 = 1,\ \beta \cdot x_2 = \frac{d+1}{2}, \ \beta \cdot \beta = s+\frac{1}{2} \right\} \right|
\]

The result follows from this by a direct calculation.
For exposition we evaluate the multiplicity in the first case.
Using that $\beta \cdot x_1 = 0$, any element $\beta \in \tilde{K}_e$ as on the right hand side is given by
\[ \beta = a (-x_1 + 2 x_2) + c x_e. \]
Let $\tilde{d} = d/2$. The condition $\beta \cdot x_2 = \tilde{d}$ yields
$a p + kc = \tilde{d}$
which can be solved if and only if $kc \equiv \tilde{d}$ mod $p$, in which case $a = (\tilde{d}-kc)/p$.
Inserting this expression into $\beta \cdot \beta$ yields
\[ c^2 e = \tilde{d}^2 - \frac{p}{2} s = -\frac{1}{4}(2ps-d^2) = D. \]
Finally, $D = \tilde{d}^2$ mod $p$, and hence if $\alpha^2 = D$ mod $p$, then $\alpha = \pm \tilde{d}$.
If $\alpha \equiv 0$ mod $p$, then the result follows. In the other case,
among $c \in \{ \pm \sqrt{D/e} \}$ there is
precisely one solution to $kc \equiv \tilde{d}$ mod $p$ if and only if there is precisely one solution to
$kc \equiv \alpha$ mod $p$.
\end{proof}

\subsection{Example: Cubic fourfolds} \label{sec:example cubics}
We consider Fano varieties of lines 
\[ X \subset \Gr(2,6) \]
of a cubic fourfold.
By \cite{BD} the Pl\"ucker polarization is of square $6$ and
pairs evenly with any class in $H^2(X,\BZ)$.
Hence their deformation type is governed by the discussion in Section~\ref{ex: prime discriminant} for $p=3$.
The Borcherds modular form for the generic pencil of Fano varieties is computed in \cite{LZ}.

Let $U \subset \p( H^0(\p^5, \CO(3)))$ be the open locus corresponding to cubic fourfolds with at worst ADE singularities.
There is a period mapping
\[ p : U \to \CM_{H} \]
to the corresponding moduli space. The pullback of the divisors $\CC_{2e}$ under this mapping
are the special cubic fourfolds of discriminant $d=2e$, see \cite{Hassett, LZ}.
(A cubic fourfold $Y \subset \p^5$ is special if it contains an algebraic surface $S$ such that the saturation of $[S]$ and $h^2$ is of discriminant $d$).

For the $1$-parameter family $\pi$ of Fano varieties of lines of a generic pencil of cubic fourfolds the
Noether-Lefschetz numbers of the second type $\NL_{s,d}^{\pi}$ and of first type
\begin{equation} \NL^{\pi}(D) = \deg \iota_{\pi}^{\ast} \NL(D) \label{nldiv} \end{equation}
are then related to the classical geometry of special cubic fourfolds.
For example,
\begin{gather*}
\NL^{\pi}_{-2,0} = \NL^{\pi}(D = 3) = 192 \\
\NL^{\pi}_{-2,4} = \NL^{\pi}(D = 7) = 917568
\end{gather*}
are the degrees of the (closure of the) divisors in $\p( H^0(\p^5, \CO(3)))$ parametrizing nodal and Pfaffian cubics respectively.
The locus of determinantal cubic fourfolds $p^{-1} \CC_{2}$ is of codimension $\geq 2$, see e.g. \cite[Rmk 3.23]{HuybrechtsNotes},
and hence 
\[ \NL^{\pi}_{-1/2,1} = \NL^{\pi}(D=1) = 0. \]
Thus one gets that
\[ \NL^{\pi}_{-5/2,1} = \NL^{\pi}(D = 4) = 3402 \]
which is the degree of the locus $p^{-1} \CC_{8}$ of cubics containing a plane.
The equalities of the Noether-Lefschetz numbers of first and second type above follow from Proposition~\ref{prop:NL first and second}:
in the first three cases since $D$ is square free, and in the last case we use that $\CC_{2}$ does not meet the curve defined by $\pi$.

\begin{comment} 
\subsection{Example I: Cubic fourfolds} \label{sec:example cubics}
Since $X$ is of $K3^{[2]}$-type,
$H^2(X,\BZ)$ is isomorphic to the lattice
\[ V = H^2(S,\BZ) \oplus \BZ \delta = E_8(-1)^{\oplus 2} \oplus U^{\oplus 3} \oplus \BZ \delta \]
where $S$ is a K3 surface and $\delta^2 = -2$.
By \cite{BD} this isomorphism can be choosen such that
the Pl\"ucker polarization %(defined by the embedding of $X$ into the Grassmannian) 
is
\[ H = 2 \ell - 5 \delta \]
where $\ell \in H^2(S,\BZ)$ is a primitive class with $\ell^2 = 14$.

The moduli space of holomorphic-symplectic varieties of this deformation type and polarization can be described as follows.
We embedd $V$ into the Mukai lattice $\Lambda$ as the orthogonal of $e+f$, where $e,f$ is a symplectic basis of one copy of $U$.
Let also $\delta = -e+f$.
%Consider the embedding into the Mukai lattice
%$V \subset \Lambda = E_8(-1)^{\oplus 2} \oplus U^4$,
%and identify as usual $V^{\perp} = \BZ L_0$, where $L_0 = e+f$ and $\delta = -e+f$, and $e,f$ is a symplectic basis of one copy of $U$.
%If $e',f'$ is a symplectic basis one another copy of $U$, we also take $\ell = e'+7f'$.
If $e',f'$ is a symplectic basis for a second copy of $U$, then we consider varieties polarized by the sublattice
%The class corresponding to $H$ will be
\[ L = \BZ H \subset V, \quad H = 2(e'+7f') + 5(e-f). \]
%and we consider $L$-quasipolarized holomorphic-symplectic varieties where $L = \BZ L_1 \subset V$.

To describe the Noether-Lefschetz numbers, one calculates that:
\[ M = (L_1^{\perp} \subset V) = E_8(-1)^2 \oplus U^2 \oplus A_2(-1) \]
where $A_2 = \binom{2\ 1}{1\ 2}$ is the $A_2$ lattice, and
%and $A_2(-1)$ is spanned by the vectors $e+3f-2 \delta$ and $-5 f' + \delta$.
%Further we have
hence $\widehat{L} = M^{\perp} = A_2$.
%spanned by $w_1=e+f$ and $w_2=e'+7f'+3e-2f$.
There is a natural isomorphism given by pairing with $H$:
\[ \widehat{L}^{\vee} / \widehat{L} \xrightarrow{\cong} \BZ / 3 \BZ, \ \gamma \mapsto \gamma \cdot H \]
%The discriminant is isomorphic to $\BZ_3$ and spanned by
%\[ \delta = \frac{1}{3}( w_1+w_2) = \frac{1}{3} (e'+7f'+4e-f). \]
%Since we have $H \cdot \widetilde{L} = 3 \BZ$ and $H \cdot \delta = 1$ pairing with $H$ induces the isomorphism
%\[ \widetilde{L}^{\vee} / \widetilde{L} \to \BZ / 3 \BZ, \ \gamma \mapsto \gamma \cdot H \]
%and we will identify $\widetilde{L}^{\vee} / \widetilde{L}$ with $\BZ_3$ using this map henceforth.
\end{comment}

\subsection{Example II: Debarre-Voisin fourfolds}
A Debarre-Voisin fourfold \cite{DV} is the holomorphic-symplectic variety 
\[ X \subset \Gr(6,10) \] given as the
vanishing locus of a section of $\Lambda^3 \CU^{\vee}$, where $\CU \subset \BC^{10} \otimes \CO$ is the universal subbundle on the Grassmannian.
These varieties are of $K3^{[2]}$-type
and the Pl\"ucker polarization is of degree $H^2 = 22$ and pairs evenly with any class in $H^2(X,\BZ)$.
Hence we are in the situation of
Section~\ref{ex: prime discriminant} for $p=11$.
The Noether-Lefschetz numbers for a generic pencil of these varieties will be computed below.

\subsection{Refined Noether-Lefschetz divisors} \label{subsec:refined NL}
%For use in Gromov-Witten theory 
We will need refined Noether-Lefschetz divisors
%$\NL_{m, s,d, \pm r}$
which also depend on the divisibility $m \geq 1$ of the curve class.
Refined Noether-Lefschetz numbers are then defined as usualy by intersection with Noether-Lefschetz divisors.
As before we assume that we are in $K3^{[n]}$-type.
Let $s \in \BQ$, $d = (d_1, \ldots, d_{\ell}) \in \BZ^{\ell}$ and $r \in \BZ_{2n-2}$ be fixed.

If $\Delta(s, d) \neq 0$ we set
\[ \NL_{m, s,d, \pm r} = \sum_{L\subset \tilde{L} \subset V} \mu(m, s,d, r | L\subset \tilde{L} \subset V ) \cdot \NL_{\tilde{L}} \]
where the refined multiplicity $\mu( \ldots )$ is the number of classes $\beta \in V^{\vee}$ which are contained in $\widetilde{L} \otimes \BQ$,
satisfy $\beta \cdot \beta = s$, $\beta \cdot L_i = d_i$ and such that the following new conditions hold:
\[ \div(\beta) = m, \quad \left[ \frac{\beta}{\div(\beta)} \right] = \pm r \in \BZ/(2n-2)\BZ. \]
Note that we treat the residue different from the non-refined case.

If $\Delta(s, d)  = 0$ we define
\begin{equation*} \label{553}
\NL_{m, s,d, \pm r} := \NL_{s, d, \pm m \cdot r} 
\end{equation*}
if $m$ is the gcd of $d_1, \ldots, d_r$ and the unique class $\beta \in L \otimes \BQ$
with $\beta \cdot L_i = d_i/m$ lies in $V^{\vee}$ and has residue $[\beta] = \pm r$.
Otherwise, we set $\NL_{m, s,d, \pm r} = 0$.

We then have
\begin{equation} \NL_{s,d, \pm r} = 
\sum_{m \geq 1} \sum_{\substack{\pm r'\\ \pm m \cdot r' = \pm r}}
\NL_{m, s,d, \pm r'}.
\label{refinedunrefined}
\end{equation}
and
\[ \NL_{m,s,d, \pm r} = \NL_{1, s/m^2, d/m, \pm r}. \]
By a simple induction argument as in \cite[Lemma 1]{KMPS},
these two equations show that the data of the unrefined Noether-Lefschetz numbers
are equivalent to the the data of the refined Noether-Lefschetz numbers/divisors.

\begin{rmk}
If the residue of a class is determined by $d$ and $s$, the inverse relation between refined and unrefined
is easy to state. We simply have
\[ \NL_{1,s,d} = \sum_{k|\mathrm{gcd}(d_1, \ldots, d_{\ell})} \mu(k) \cdot \NL_{s/k^2, d/k}, \]
parallel to the multiple cover rule we study in this paper.
\end{rmk}

\section{Gromov--Witten theory and Noether-Lefschetz theory} \label{sec:GWNL theory}
Let $V$ be the $K3^{[n]}$-lattice and let $L \subset V$ be a fixed primitive sublattice with integral basis $L_i$.
We consider a $1$-parameter family
\[ \pi : \CX \to C,\quad \CL_1 ,\dots, \CL_{\ell} \in \Pic(\CX) \]
of $L$-quasi\-polarized holomorphic-symplectic varieties of $K3^{[n]}$-type.

The goal of this section is to relate Gromov-Witten invariants of $\CX$ in fiber classes
to the Noether-Lefschetz numbers of the family
and the reduced Gromov-Witten invariants in $K3^{[n]}$-type.

\subsection{Gromov-Witten invariants of the family}
Let $\gamma_i \in H^{\ast}(\CX)$ be cohomology classes which can be written
in terms of polynomials $p_i$ in the Chern classes of $\CL_i$,
\[ \gamma_i = p_i(c_1(\CL_1), \ldots , c_1(\CL_{\ell}) ). \]
Let $\Mbar_{g,N}(\CX, d)$ for $d \in \BZ^{\ell}$ be the moduli space of $N$-marked genus $g$ stable maps $f : C \to \CX$
such that
\begin{itemize}
\item $f$ maps into the fibers of $\CX$, that is $\pi_{\ast} f_{\ast}[C] = 0$, and
\item $f$ is of degree $d_i$ against $L_i$,
\[ \int_{[C]} f^{\ast}(c_1(\CL_i)) = d_i. \]
\end{itemize}
%representing a fiber class of degree $d_i$ with $\CL_i$, that is $\pi_{\ast} f_{\ast} [C] = 0$ and we have
%\[ \int_{[C]} f^{\ast}(c_1(\CL_i)) = d_i. \]
We consider the invariants
\[
\blangle \alpha ; \gamma_1, \ldots, \gamma_N \brangle^{\CX}_{g,d}
=
\int_{[\Mbar_{g,n}(\CX, d)]} \tau^{\ast}(\alpha) \ev_{1}^{\ast}(\gamma_1) \cdots \ev_{N}^{\ast}(\gamma_N)
\]
where $\alpha \in H^{\ast}(\Mbar_{g,n})$ is tautological and $\tau$ is the forgetful map.
%where $\tau : \Mbar_{g,n}(\CX, d) \to \Mbar_{g,n}$ is the forgetful map and $\alpha \in H^{\ast}(\Mbar_{g,n})$ is tautological.

\subsection{Gromov-Witten invariants of the fiber}
%To define the latter, l
Let $X$ be any holomorphic-symplectic variety
of $K3^{[n]}$-type and let $\beta \in H_2(X,\BZ)$ be an effective curve class.
Assume there exists an embedding
\[ L \otimes \BR \hookrightarrow H^2(X,\BR) \]
which is an isometry onto its image such that $\beta \cdot L_i = d_i$ for all $i$.
As usual we let $L_i \in H^2(X,\BR)$ denote the image of $L_i \in L$ under this map.
%With some abuse of notation 
Let also
\[ \gamma_i = p_i( L_1, \ldots, L_{\ell} ). \]
By deformation invariance and the invariance property of Section~\ref{subsec:invariance}
the reduced Gromov--Witten invariant
$\blangle \alpha ; \gamma_1, \ldots, \gamma_N \brangle^X_{g,\beta}$ only depends on the degree $d = (d_1, \ldots, d_{\ell})$,
the polynomials $p_i$, $s = \beta \cdot \beta$, and the curve invariants $m = \div(\beta)$ and the residue set $\pm r = \pm [ \beta / \div(\beta) ]$.
We write
\[ \blangle \alpha ; \gamma_1, \ldots, \gamma_N \brangle^X_{g,\beta} = \blangle \alpha ; p_1, \ldots, p_N \brangle^X_{g,m,s,d,\pm r}. \]

\subsection{The relation}
Consider the refined Noether-Lefschetz numbers of $\pi$,
\[ \NL^{\pi}_{m, s,d, \pm r} := \int_{C} \iota_{\pi}^{\ast} \NL_{m, s,d, \pm r'} \]
where $\iota_{\pi}: C \to \CM_{L}$ is the morphism defined by the family.

\begin{prop} \label{prop:GWNLrelation}
Let $\gamma_i = p_i(\CL_1, \ldots, \CL_{\ell}) \in H^{\ast}(\CX)$. Then we have:
\[
\blangle \alpha ; \gamma_1, \ldots, \gamma_N \brangle^{\CX}_{g,d}
= 
\sum_{m,s, \pm r}
\NL^{\pi}_{m,s, d, \pm r} \cdot \blangle \alpha \cdot (-1)^g \lambda_g ; p_1, \ldots, p_N \brangle^X_{g,m,s,d,\pm r}
\]
\end{prop}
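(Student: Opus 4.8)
The plan is to follow the strategy of Maulik and Pandharipande \cite{MP} (as employed in \cite{KMPS}), adapting it from K3 surfaces to holomorphic-symplectic fibers of $K3^{[n]}$-type. First I would observe that any stable map $f : \Sigma \to \CX$ with $\pi_\ast f_\ast[\Sigma] = 0$ has connected image contained in a single fiber $X_t = \pi^{-1}(t)$, so that $\Mbar_{g,N}(\CX, d)$ carries a morphism to $C$ recording $t$, and each component is a moduli space of stable maps into a fiber. Over a point $t$, the relevant maps realize effective curve classes $\beta \in H_2(X_t,\BZ)$ with $\beta \cdot L_i = d_i$; for very general $t$ the Picard lattice of $X_t$ is $L$ and no such $\beta$ exists outside $L \otimes \BQ$, while the extra classes appear precisely over the Noether-Lefschetz loci in $C$. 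Thus I would stratify the contributions to $\blangle \alpha; \gamma_1, \ldots, \gamma_N \brangle^{\CX}_{g,d}$ by the pairs $(t,\beta)$, grouped according to the deformation invariants $(m,s,\pm r) = (\div(\beta),\, \beta\cdot\beta,\, \pm[\beta/\div(\beta)])$, the degree $d$ being fixed.

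The heart of the argument is a comparison of obstruction theories. Since $X_t$ is a fiber of a map to a curve, its normal bundle in $\CX$ is $\pi^\ast T_t C|_{X_t} \cong \CO_{X_t}$, giving the exact sequence
\[
0 \to f^\ast T_{X_t} \to f^\ast T_{\CX} \to \CO_\Sigma \to 0.
\]
Taking cohomology along the universal curve, the normal direction contributes $H^0(\Sigma,\CO_\Sigma) = \BC$, matching the deformation of $t$ inside $C$, and $H^1(\Sigma,\CO_\Sigma) = \BE^\vee$, the dual Hodge bundle of rank $g$. On the holomorphic-symplectic fiber the symplectic form furnishes a cosection of the obstruction sheaf, via $f^\ast T_{X_t} \cong f^\ast \Omega_{X_t} \to \omega_\Sigma$, and the reduced obstruction theory is obtained by removing the associated trivial quotient. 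I would show that the $H^0$-term is exactly absorbed by this reduction, matching the ordinary virtual theory on $\CX$ with the reduced theory on the fibers, while the leftover obstruction bundle $\BE^\vee$ contributes its Euler class $c_g(\BE^\vee) = (-1)^g \lambda_g$. A dimension count confirms the bookkeeping: the reduced dimension $(2n-2)(1-g) + N + g$ on the fiber, minus $\deg(\lambda_g) = g$, equals the ordinary virtual dimension $(2n-2)(1-g)+N$ on $\CX$. This yields, over each Noether-Lefschetz point, the local identity
\[
[\Mbar_{g,N}(\CX,d)]^{\vir} = (-1)^g \lambda_g \cap [\Mbar_{g,N}(X_t,\beta)]^{\text{red}}
\]
up to the local multiplicity with which $\beta$ becomes algebraic.

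Finally I would assemble the global identity. Summing the local contributions, the multiplicity with which a fiber acquires an algebraic class of invariants $(m,s,d,\pm r)$ along $C$ is by definition the refined Noether-Lefschetz number $\NL^{\pi}_{m,s,d,\pm r}$, while by the deformation and monodromy invariance of Section~\ref{subsec:invariance} the reduced invariant $\blangle \alpha \cdot (-1)^g \lambda_g; p_1, \ldots, p_N \brangle$ of any such $(X_t,\beta)$ depends only on $(g,m,s,d,\pm r)$ and the polynomials $p_i$ (here $c_1(\CL_j)|_{X_t} = L_j$, so $\gamma_i|_{X_t} = p_i(L_1, \ldots, L_\ell)$). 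Collecting terms gives the claimed formula. The main obstacle is the virtual-class comparison: one must have the reduced obstruction theory for $K3^{[n]}$-type fibers in families and prove the $\lambda_g$-identity, taking care that the Noether-Lefschetz locus may be non-reduced, so that the correct scheme-theoretic multiplicities — matching the definition of $\NL^{\pi}_{m,s,d,\pm r}$, including the refinement by divisibility and residue — are produced. This is precisely where the geometry of the degeneration and the excess-intersection bookkeeping enter, and where the generalization beyond the surface case of \cite{MP} requires genuine care.
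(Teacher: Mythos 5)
Your proposal is correct and follows essentially the same route as the paper: the paper's proof simply invokes the Maulik--Pandharipande argument \cite[Sec.~3.2]{MP} for fiber-class stable maps (including the obstruction-theory comparison producing the $(-1)^g\lambda_g$ factor, which the paper cites rather than reproduces), with the one new ingredient being that curve classes in a fiber are grouped by the full set of deformation invariants $\big(\div(\beta),\, \beta\cdot\beta,\, \pm[\beta/\div(\beta)]\big)$ via Corollary~\ref{cor:curve deformations invariance}. This grouping by square, divisibility, \emph{and} residue is exactly the adaptation you make, so the two arguments coincide.
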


Here $\lambda_i$ are the $i$-th Chern classes of the Hodge bundle on the moduli space of stable curves.
The proposition can be extended to more general classes $\gamma_i$.
It is enough to assume that $\gamma_i$ is the product of some polynomial in the $\CL_i$
and a class which restricts to a monodromy invariant class on each fiber, for example a Chern class.

\begin{proof}
The proof follows by the identical argument as for the K3 surfaces, as discussed in \cite[Section 3.2]{MP}.
The above equality in the K3 case is \cite[Eqn. (17)]{MP}.
As in \cite{MP}, for each $\xi \in C$ we want to group together all curve classes in $H_2(\CX_{\xi},\BZ)$
of degree $d$ which have the same Gromov-Witten invariants. 
By Corollary~\ref{cor:curve deformations invariance}
we hence may group together classes of the same square, the same divisibility, and the same residue.
Thus we replace the set $B_{\xi}(m,h,d)$ of \cite[Sec.3.2]{MP} by
\[
B_{\xi}(m,s,d, \pm r) = \left\{ \beta \in H_2(\CX_{\xi},\BZ) \middle| 
\begin{array}{c} (\beta,\beta)=s,\ \mathrm\div(\beta) = m, \\ {[} \beta/\div(\beta)] = \pm r,\  \beta \cdot L_i = d_i \\ \beta \perp H^{2,0}(\CX_{\xi},\BC) \end{array}
\right\}
\]
The rest of the argument of \cite[Sec.3]{MP} goes through without change.
\end{proof}

\subsection{Reformulation}
We can rewrite Proposition~\ref{prop:GWNLrelation} in terms of invariants where we have formally subtracted multiple cover contrubtions.
For simplicity assume that for $\beta \in H_2(X,\BZ)$
the residue $r([\beta])$ is determined by the degrees $d_i = \beta \cdot L_i$.
Write $r(d)$ for the residue.
Proposition~\ref{prop:GWNLrelation} then says that
\[
\blangle \alpha ; \gamma_1, \ldots, \gamma_N \brangle^{\CX}_{g,d}
= 
\sum_{m,s}
\NL^{\pi}_{m,s, d} \cdot \blangle \alpha (-1)^g \lambda_g ; p_1, \ldots, p_N \brangle^X_{g,m,s,d}.
\]

Let us subtract formally the multiple cover contributions from the invariants of $\CX$,
\begin{multline*}
\blangle \alpha ; \gamma_1, \ldots, \gamma_N \brangle^{\CX, \text{mc}}_{g,d} \\
:=
\sum_{k|d} (-1)^{r(d) + r(d/k)} \mu(k) k^{2g-3+N-\deg(\alpha)} 
\blangle \alpha ; \gamma_1, \ldots, \gamma_N \brangle^{\CX}_{g,d/k}
\end{multline*}
as well as from the the reduced Gromov-Witten invariants,
\begin{multline*}
\blangle \alpha (-1)^g \lambda_g ; p_1, \ldots, p_N \brangle^{X,\text{mc}}_{g,m,s,d} \\
:= 
\sum_{k|m} (-1)^{r(d) + r(d/k)} k^{2g-3+N-\deg(\alpha)} \mu(k) \blangle \alpha (-1)^g \lambda_g ; p_1, \ldots, p_N \brangle^X_{g,m/k,s/k^2,d/k}.
\end{multline*}
The following is the result of a short calculation:

\begin{lemma} We have
\[
\blangle \alpha ; \gamma_1, \ldots, \gamma_N \brangle^{\CX, \text{mc}}_{g,d}
=
\sum_{m,s} \NL_{m,s,d} \cdot 
\blangle \alpha (-1)^g \lambda_g ; p_1, \ldots, p_N \brangle^{X,\text{mc}}_{g,m,s,d}
\]
\end{lemma}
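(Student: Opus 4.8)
The plan is to substitute Proposition~\ref{prop:GWNLrelation} into the definition of the left-hand side and then reindex a double sum, the only real input being the scaling relation for refined Noether--Lefschetz numbers. Throughout I write $w = 2g-3+N-\deg(\alpha)$, abbreviate the sign by $\epsilon(d,k) = (-1)^{r(d)+r(d/k)}$, the reduced invariants by $a_{m,s,d} = \blangle \alpha(-1)^g\lambda_g; p_1,\dots,p_N\brangle^X_{g,m,s,d}$, and the refined Noether--Lefschetz numbers of the family by $\nu_{m,s,d} = \NL^{\pi}_{m,s,d}$, adopting the convention that both $a$ and $\nu$ vanish unless $m\mid \gcd(d)$, so that every scaled degree label $d/k$ that occurs is automatically integral. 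Unfolding the definition of $\blangle\alpha;\gamma_1,\dots,\gamma_N\brangle^{\CX,\mathrm{mc}}_{g,d}$ and then applying Proposition~\ref{prop:GWNLrelation} to each $\blangle\cdots\brangle^{\CX}_{g,d/k}$ gives
\[
\blangle\alpha;\gamma_1,\dots,\gamma_N\brangle^{\CX,\mathrm{mc}}_{g,d} = \sum_{k\mid d}\epsilon(d,k)\mu(k)k^{w}\sum_{m,s}\nu_{m,s,d/k}\,a_{m,s,d/k}.
\]

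Next I would expand the right-hand side of the asserted identity symmetrically. Inserting the definition of $\blangle\cdots\brangle^{X,\mathrm{mc}}_{g,m,s,d}$ into $\sum_{m,s}\nu_{m,s,d}\,\blangle\cdots\brangle^{X,\mathrm{mc}}_{g,m,s,d}$ produces the double sum $\sum_{m,s}\nu_{m,s,d}\sum_{k\mid m}\epsilon(d,k)\mu(k)k^{w}\,a_{m/k,s/k^2,d/k}$, and I would reindex it by setting $m'' = m/k$ and $s'' = s/k^2$. Since $\nu_{m,s,d}$ vanishes unless $k\mid m\mid\gcd(d)$, the parameter $k$ effectively runs over $k\mid d$ while $(m'',s'')$ runs freely, and the sum becomes
\[
\sum_{k\mid d}\epsilon(d,k)\mu(k)k^{w}\sum_{m'',s''}\nu_{km'',k^2 s'',d}\,a_{m'',s'',d/k}.
\]
Comparing this term-by-term with the previous display (after the cosmetic renaming $(m,s)\mapsto(m'',s'')$), the prefactors $\epsilon(d,k)\mu(k)k^{w}$ and the reduced factors $a_{m'',s'',d/k}$ already agree, so the whole identity collapses to the single equality of Noether--Lefschetz coefficients $\nu_{m'',s'',d/k} = \nu_{km'',k^2 s'',d}$ for all relevant indices.

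That remaining equality is exactly the scaling relation $\NL^{\pi}_{m,s,d} = \NL^{\pi}_{1,s/m^2,d/m}$ of Section~\ref{subsec:refined NL}: applied to each side, both reduce to $\NL^{\pi}_{1,\,s''/m''^2,\,d/(km'')}$. Crucially, the sign $\epsilon(d,k)$ and the exponent $w$ are literally the same expressions in the two multiple-cover definitions, each built from the global degree $d$ and its divisor $d/k$, so no rescaling of $k$-powers or signs arises in the reindexing. The main thing to watch is therefore bookkeeping rather than any geometric or analytic content, and the step I would write out most carefully is the justification that the reindexing $(m,s,k)\mapsto(m'',s'',k)$ is a genuine bijection of index sets: one must confirm that the constraint $k\mid m$ built into $\blangle\cdots\brangle^{X,\mathrm{mc}}$ is compatible with the integrality constraint $k\mid d$ governing the left-hand side (automatic from $m\mid\gcd(d)$) and that the vanishing conventions for $a$ and $\nu$ absorb any degenerate terms, since a sloppy treatment of these divisibility constraints is the easiest place to introduce an error.
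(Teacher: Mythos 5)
Your proof is correct and is exactly the ``short calculation'' that the paper alludes to but does not write out: expand both sides using the two multiple-cover definitions and Proposition~\ref{prop:GWNLrelation}, interchange and reindex the double sum via $(m,s)=(km'',k^2s'')$, and reduce the identity to the scaling relation $\NL_{m,s,d}=\NL_{1,s/m^2,d/m}$ of Section~\ref{subsec:refined NL}, under which both $\NL_{km'',k^2s'',d}$ and $\NL_{m'',s'',d/k}$ equal $\NL_{1,\,s''/m''^2,\,d/(km'')}$. The bookkeeping points you single out --- that the sign and the power of $k$ depend only on $d$ and $k$ and hence match term by term, and that the vanishing conventions force $m\mid\gcd(d)$ so all scaled labels are integral --- are precisely the right ones.
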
 

In particular, if the multiple cover conjecture (Conjecture~\ref{conjecture:MC})
holds, after subtracting the multiple cover contributions the invariants of $X$ do not depend on the divisibility $m$ and
%we have
%\[ \blangle \alpha (-1)^g \lambda_g ; p_1, \ldots, p_N \brangle^{X,\text{BPS}}_{g,m,s,d}
%=
%\blangle \alpha (-1)^g \lambda_g ; p_1, \ldots, p_N \brangle^{X,\text{BPS}}_{g,1,s,d} \]
so with \eqref{refinedunrefined} we obtain:
\begin{equation} \label{bps formulation:mc known}
\begin{aligned}
\blangle \alpha ; \gamma_1, \ldots, \gamma_N \brangle^{\CX, \text{mc}}_{g,d}
& =
\sum_{s} \blangle \alpha (-1)^g \lambda_g ; p_1, \ldots, p_N \brangle^{X,\text{mc}}_{g,1,s,d}
\sum_{m,s} \NL_{m,s,d}  \\
& =
\sum_{s} \NL_{s,d} \cdot \blangle \alpha (-1)^g \lambda_g ; p_1, \ldots, p_N \brangle^{X,\text{mc}}_{g,1,s,d}.
\end{aligned}
\end{equation}

\section{Mirror symmetry} \label{sec:mirror_symmetry}
\subsection{Overview}
In this section we review how to use mirror symmetry formulas
to compute the genus $0$ Gromov-Witten invariants
for the total space $\CX$ of generic pencils of Fano varieties of lines of cubic fourfolds
and of Debarre-Voisin varieties.

Mirror symmetry here means an application of the following results:
Givental's description of the $I$-function for complete intersections in toric varieties \cite{Givental},
the proof of the abelian/non-abelian correspondence by Webb
that relates the $I$-function of a GIT quotient with that of its abelian quotient \cite{Webb},
and the genus $0$ wallcrossing formula between quasi-maps and Gromov-Witten invariants for GIT quotients by Ciocan-Fontanine and Kim \cite{CFK}.

%The plan is as follows: 
We first determine the small $I$-function
for the cases we are interested in, then we shortly recall how to relate the $I$ and $J$ functions.
We assume basic familiarity with the language of \cite{CFK, Webb} throughout.

\subsection{$I$-functions}
We work in the following setup: Let $V$ be a vector space over $\BC$,
and let $G$ be a connected reductive group acting faithfully on $V$ on the left.
We also fix a character of $G$ for which we assume that the semistable and stable locus, denoted by $V^s(G)$, agrees.
For simplicity we also assume that the $G$-action on the stable locus is free.
We consider the GIT quotient
\[ Y = V // G = V^s(G) / G. \]

Let $T \subset G$ be a maximal torus and consider also the abelian quotient $V^s(T)/T$. 
We have then the following diagram relating the abelian and non-abelian quotients: % $V^s(G)/G$ with the abelian quotient $V^s(T)/T$:
\[
\begin{tikzcd}
V^s(G)/T \ar{r}{j} \ar{d}{\xi} & V^s(T)/T \\
V^s(G)/G.
\end{tikzcd}
\]
The Weyl group $W$ of $G$ acts naturally on the cohomology of $V^s(G)/T$ and one has the isomorphism:
\[ \xi^{\ast} : H^{\ast}(V^s(G)/G , \BQ) \xrightarrow{\cong} H^{\ast}( V^s(G)/T, \BQ )^{W}. \]

Let $E$ be a $G$-representation and consider a smooth zero locus of the associated homogeneous bundle $E$ on $Y$,
\[ X \subset Y. \]
The small $I$-function of $X$ in $Y$ is a formal series
\[ I^{X} = I^{Y,E} = 1 + \sum_{\beta \neq 0} q^\beta I_{\beta}(z) \]
where $\beta \in H_2(Y,\BZ)$ runs over all curve classes, $q^{\beta}$ is a formal variable and $I_{\beta}(z)$ is a formal series in $z^{\pm 1}$ with coefficients in $H^{\ast}(Y,\BQ)$.
It can then be determined in the following steps:

\vspace{5pt}
\noindent 
\textbf{Abelian/Non-Abelian correspondence}(\cite{Webb}).
\[ \xi^{\ast} I_{\beta}^{Y,E} = j^{\ast} \sum_{\tilde{\beta} \mapsto \beta} \left( \prod_{\alpha} 
\frac{\prod_{k=-\infty}^{\tilde{\beta} \cdot c_1(L_\alpha)} (c_1(L_{\alpha}) + kz)}{\prod_{k=-\infty}^{0} (c_1(L_{\alpha}) + kz)} \right) I_{\tilde{\beta}}^{V//T,E} \]
where $\alpha$ runs over the roots of $G$ and $L_{\alpha}$ is the associated line bundle on the abelian quotient,
$\tilde{\beta} \in H_2(V//T,\BZ) = \Hom(\chi(T),\BZ)$ runs over the characters of $T$ that restrict to the given character $\beta \in H_2(Y,\BZ) = \Hom(\chi(G),\BZ)$ under the map induced by $\chi(G) \to \chi(T)$.
When it is clear from context, we will often omit the pullbacks $\xi^{\ast}$ and $j^{\ast}$ from the notation.

\vspace{5pt}
\noindent 
\textbf{Twisting}(\cite{Givental}). When restricting the $G$-representation $E$ to $T$, it decomposes into a direct sum of $1$-dimensional representations $M_i$.
We write $M_i$ also for the associated line bundles on $V//T$. Then
\[ I^{V//T,E}_{\beta} = \left( \prod_{i=1}^{\mathrm{rk}(E)} \prod_{k=1}^{c_1(M_i) \cdot \beta} (c_1(M_i) + kz)  \right) \cdot I^{X}_{\beta}. \]

\vspace{5pt}
\noindent 
\textbf{Toric varieties}(\cite{Givental}). Let $D_i, i=1, \ldots, n$ be the torus invariant divisors on the toric variety $V//T$.
\[ I_{\beta}^{V//T} = \prod_{i=1}^{n} \frac{ \prod_{k=-\infty}^{0} (D_i + kz)  }{ \prod_{k=-\infty}^{D_i \cdot \beta} (D_i + kz) }. \]

\begin{example}(Projective space $\p^{n-1}$) We have
$I^{\p^{n-1}}_{d} = ( \prod_{k=1}^{d} (H + kz)^{n} )^{-1}$.
\end{example}
\begin{example}(Grassmannian)
Let $M_{k \times n}$ be the space of $k \times n$-matrices acted on by $\GL(k)$ on the left.
Taking the determinant character,
the stable locus is the locus of matrices of full rank
and the associated GIT quotient  is the Grassmannian
\[ \Gr(k,n) = M_{k \times n} //_{\det} \GL(k). \]
The stable locus for the maximal torus $T \subset \GL(k)$ of diagonal matrices
is given by matrices where each row is non-zero. The abelian quotient is
\[ M_{k \times n} // T = \underbrace{\p^{n-1} \times \ldots \times \p^{n-1}}_{k \text{ times }} \]
The roots of $\GL(k)$ are $e_i^{\ast} - e_j^{\ast}$ and correspond to $\CO(H_i - H_j)$ where $H_i$ is the hyperplane class pulled back from the $i$-th factor.

The universal subbundle $\CU \to \BC^{n} \otimes \CO_{\Gr}$ on the Grassmannian corresponds to the inclusion of $G$-representations
\[ M_{k \times n} \times \BC^{k} \to M_{k \times n} \times \BC^{n} \]
where a column vector $w \in \BC^{k}$ is acting on by $g \cdot w := (g^t)^{-1} w$,
and $\BC^{n}$ carries the trivial representation.
%(This is okay, since $v \in M_{k \times n}$ corresponds to the map $\BC^{k} \to \BC^{n}$ multiplying by $v^{t}$ on the left, so $(v,w) \mapsto (v, v^{t} w)$ and $(gv, (g^t)^{-1} w) \mapsto (gv, v^{t} g^{t} (g^{t})^{-1} w)$.)
%In particular, 
The Pl\"ucker polarization on $\Gr(k,n)$ thus corresponds to
the line bundle $\CO(H_1 + \ldots + H_{k})$ on $(\p^{n-1})^k$.
Hence if we consider degree $d$ curves on the Grassmannian, in the abelian/non-abelian correspondence we have to sum over $(d_1, \ldots, d_{k})$ adding up to $d$.

Calculating the $I$-function is then easy. For example, for $k=2$ (and dropping the pullbacks $\xi^{\ast}$, $j^{\ast}$ from notation),
one obtains
\[ I^{\Gr(2,n)}_{d}
= \sum_{d=d_1 + d_2} (-1)^d \frac{ H_1 - H_2 + (d_1 - d_2)z }{H_1 - H_2} \frac{1}{ \prod_{k=1}^{d_1} (H_1 + kz)^{n} \prod_{k=1}^{d_2} (H_2 + kz)^n } \]
where the division by $H_1 - H_2$ is to take place formally.
\end{example}

\begin{example}(Fano variety of a cubic fourfold)
The Fano variety of a cubic fourfold $X \subset \Gr(2,6)$ is a zero locus of a section of $\mathrm{Sym}^{3}(U^{\vee})$.
On the abelian quotient $\p^5 \times \p^5$ this vector bundle corresponds to
%Since $U^{\vee}$ corresponds to the direct sum $\CO(H_1) \oplus \CO(H_2)$ on $\p^{5} \times \p^{5}$,
%we find that $\mathrm{Sym}^{3}(U^{\vee})$ corresponds to
\[ \CO(3 H_1) \oplus \CO(2 H_1 + H_2) \oplus \CO(H_1 + 2 H_2) \oplus \CO(3 H_2). \]
We find the $I$-function
\[
I^{X \subset \Gr(2,6)} = 
I^{\Gr(2,6)}_{d} \cdot \prod_{\substack{3 = i_1 + i_2 \\ i_1, i_2 \geq 0}} \prod_{k=1}^{i_1 d_1 + i_2 d_2} (i_1 H_1 + i_2 H_2 + kz).
\]
\end{example}

\begin{example}(A pencil of cubic fourfolds)
We consider a generic pencil of cubic fourfolds $\CX \subset \Gr(2,6) \times \p^1$.
Since $\CX$ is the zero locus of a generic section of the globally generated bundle $\mathrm{Sym}^3(U^{\vee}) \otimes \CO_{\p^1}(1)$,
it is smooth by a Bertini type argument.
%"Some constructions of projective varieties", Ottaviani (thm 2.6)
The abelian quotient is $\p^5 \times \p^5 \times \p^1$.
Let $h$ be the hyperplane class on $\p^1$.
Then the $I$-function for the fiber part reads:
\begin{multline*}
I^{\CX}_{(d,0)}
= (-1)^d \sum_{d=d_1 + d_2} \frac{ H_1 - H_2 + (d_1 - d_2)z }{H_1 - H_2} \cdot \\
\frac{1}{ \prod_{k=1}^{d_1} (H_1 + kz)^{6} \prod_{k=1}^{d_2} (H_2 + kz)^6 }
\prod_{\substack{3 = i_1 + i_2 \\ i_1, i_2 \geq 0}} \prod_{k=1}^{i_1 d_1 + i_2 d_2} (i_1 H_1 + i_2 H_2 + h + kz)
\end{multline*}
\end{example}

\begin{example}(A pencil of Debarre-Voisin fourfolds)
We  consider a pencil $\CX \subset \Gr(6,10) \times \p^1$ of DV fourfolds
which is cut out by $\wedge^3 \CU^{\vee} \otimes \CO(1)$.
The abelian quotient is $(\p^9)^6 \times \p^1$.
We let $h$ be the hyperplane class of $\p^1$, and $H_i$ be the hyperplane class pulled back from the $i$-th copy of $\p^9$.
Then the $I$-function in the fiber class is:
\begin{align*}
I^{\CX}_{(d,0)}
= (-1)^d & \sum_{d=d_1 + \ldots + d_6} \left( \prod_{1 \leq i < j \leq 6} \frac{ H_i - H_j + (d_i - d_j)z }{H_i - H_j} \right) \\
& \times \prod_{i=1}^{6} \prod_{k=1}^{d_i} \frac{1}{ (H_i + kz)^{10} } \\
& \times \prod_{1 \leq i_1 < i_2 < i_3 \leq 6} \prod_{k=1}^{d_{i_1} + d_{i_2} + d_{i_3}} (H_{i_1} + H_{i_2} + H_{i_3} + h + kz)
\end{align*}
\end{example}

\subsection{$I$ and $J$ functions}
Given a GIT quotient $X$ as before,
let $t \in H^2(X,\BC)$ be any element (or a formal variable).
The big $J$-function (at $\epsilon = \infty$) with insertion $t$ is
\[
J^{\infty}(q,t,z) = e^{\frac{t}{z}} \left[ 1 + \sum_{\beta \neq 0} e^{\beta \cdot t} q^{\beta}
 \ev_{\ast}\left( \frac{ [ \Mbar_{0,1}( X, \beta) ]^{\text{vir}} }{z (z-\psi)} \right) \right]
\]
Expand the small $I$-function according to degree:
\[ I = I_0(q) + \frac{I_1(q)}{z} + \frac{I_2(q)}{z^2} + \ldots \, . \]
Then the mirror theorem \cite{CFK} states that:
\[ J^{\infty}\left( q, \frac{I_1(q)}{I_0(q)} ,z \right) = \frac{I(q,z)}{I_0(q)}. \]

By inverting this relation we can compute the descendent $1$-point invariants of $X$.
We sketch the details for the case of the Lefschetz pencil $\CX \subset \Gr(2,6) \times \p^1$ of Fano's. %The other cases are parallel.
In this case (with $H$ the Pl\"ucker polarization on $\Gr(2,6)$) let us write
\[ I_0(q) = f_0(q), \quad I_1(q) = f_1(q) H + f_2(q) h. \]
Then with $t = I_1(q)/I_0(q)$ and since $\beta$ is fiber we get:
\[ e^{\beta \cdot t} q^{d} = \left( q e^{ \frac{f_1(q)}{f_0(q)} } \right)^d = Q^d \]
where we have identified $q^{\beta} = q^d$ and used the variable
\[ Q = q \exp\left( \frac{f_1(q)}{f_0(q)} \right). \]
Then we obtain the relation
\[
\exp\left( -\frac{I_1(q)}{I_0(q)} \frac{1}{z} \right) \frac{I^{\text{fib}}(q,z)}{I_0(q)} = 1 + \sum_{\beta \neq 0} Q^{\beta} \ev_{\ast}\left( \frac{ [ \Mbar_{0,1}( X, \beta) ]^{\text{vir}} }{z (z-\psi)} \right)
\]
where $I^{\text{fib}}(q,z)$ stands for the $I$-functions involving only fiber classes $\beta$.

\section{Results}
\subsection{Cubic fourfolds}
We consider a generic pencil of Fano varieties of cubic fourfolds
\[ \CX \subset \Gr(2,6) \times \p^1. \]
This defines a $1$-parameter family $\pi : \CX \to \p^1$ polarized by the Pl\"ucker embeddings.
The family has precisely $192$ singular fibers $\CX_t$,
which are irreducible varieties with ordinary  double point singularities along a smooth K3 surface (and smooth elsewhere) \cite{CG}.
%which are singular along a smooth K3 surface with $A_1$-singularities in the normal direction \cite{CG}.
The blowup $\mathrm{Bl}_S \CX_t$ along the singular locus is isomorphic to the Hilbert scheme $\Hilb_2(S)$,
and the blowdown map contracts a $\p^1$-bundle over $S$ along its fibers which are $(-2)$-curves \cite[Sec.6.3]{Hassett}.
The map $\CX \to \p^1$ is a ordinary double point degeneration to $\CX_t$.

%The singular locus of these fibers is a smooth K3 surface $S$ along singular locus is a smooth K3 surface $S$ and the transverse direction is a $A_1$-singularity \cite[Sec.6.3]{Hassett}.
To obtain a family of smooth holomorphic-symplectic manifolds, we follow the arguments of Maulik and Pandharipande \cite[Sec.5.1]{MP}.
We choose a double cover
\[ \epsilon : C \to \p^1 \]
which is ramified along the $192$ base points of nodal fibers. The family
\[ \epsilon^{\ast} \CX \to C \]
then has double point singularities along the surfaces $S$ which can be resolved by a small resolution
\[ \tilde{\pi} : \tilde{\CX} \to C. \]
The family $\tilde{\pi}$ is a $1$-parameter family of quasi-polarized $K3^{[2]}$-type varieties,
polarized by the pullback of the Pl\"ucker polarization.

The Noether-Lefschetz numbers of the family $\pi$ in terms of $\tilde{\pi}$ are then:
\[ \NL_{s,d}^{\pi} = \frac{1}{2} \NL_{s,d}^{\tilde{\pi}}. \]

We also have the following comparison of Gromov-Witten invariants of the total spaces of $\CX$ and $\tilde{\CX}$ in fiber classes.
We consider $1$-pointed invariants to simplify the notation.

\begin{lemma} For any $i, \alpha$,
\[ \langle \alpha; H^i \rangle^{\CX, \textup{mc}}_{g,d} = \frac{1}{2} \langle \alpha; H^i \rangle^{\widetilde{\CX}, \textup{mc}}_{g,d}. \]
\end{lemma}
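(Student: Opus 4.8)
The plan is to compare the fiber-class Gromov--Witten theories of the two total spaces directly, exhibiting the factor $\tfrac12$ as the geometric shadow of the degree-$2$ base change $\epsilon : C \to \p^1$ --- exactly as the identity $\NL^{\pi}_{s,d} = \tfrac12\NL^{\tilde{\pi}}_{s,d}$ for Noether--Lefschetz numbers arises. The argument follows the ordinary-double-point degeneration analysis of \cite[Sec.~5.1]{MP}.

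First I would analyze the composite $f : \tilde{\CX} \to \epsilon^{\ast}\CX \to \CX$. Over the complement of the $192$ branch points of $\epsilon$ this is a degree-$2$ cover that is fiberwise an isomorphism: each smooth fiber $\CX_t$ of $\pi$ is isomorphic to each of its two preimages in $\tilde{\CX}$, and the small resolution $\tilde{\CX} \to \epsilon^{\ast}\CX$ is an isomorphism over these loci. Since $f^{\ast}H = H$ and $f$ matches the forgetful and evaluation maps, every stable map to a smooth fiber of $\pi$ lifts to exactly two isomorphic stable maps to fibers of $\tilde{\pi}$ with matching (ordinary) virtual classes. This accounts for a factor of $2$ between the parts of $\Mbar_{g,1}(\CX,d)$ and $\Mbar_{g,1}(\tilde{\CX},d)$ lying over the unramified locus.

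The subtlety, and the \emph{main obstacle}, lies at the $192$ special fibers, where $\CX_t$ has ODP singularities along a K3 surface $S$ while the corresponding fiber of $\tilde{\pi}$ is its small (smooth) resolution. Because the total space $\CX$ is smooth, stable maps into these singular fibers still contribute to $\langle \alpha; H^i \rangle^{\CX}_{g,d}$, and over the ramification points $\epsilon$ has a single preimage, so the naive factor $2$ fails for the raw invariants. I would control this using the degeneration formula for the ODP-along-$S$ model as in \cite[Sec.~5.1]{MP}: the discrepancy between $\langle \alpha; H^i \rangle^{\CX}_{g,d}$ and $\tfrac12\langle \alpha; H^i \rangle^{\tilde{\CX}}_{g,d}$ is concentrated in local contributions supported near the vanishing K3 surfaces, and these are of multiple-cover type in the degree $d$. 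The decisive point is that the multiple-cover subtraction defining $\langle\,\cdot\,\rangle^{\textup{mc}}$ is designed to cancel precisely such contributions, so that after subtraction the identity becomes exact:
\[
\langle \alpha; H^i \rangle^{\CX,\textup{mc}}_{g,d} = \tfrac12 \langle \alpha; H^i \rangle^{\tilde{\CX},\textup{mc}}_{g,d}.
\]

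As a consistency check, one may instead apply the reformulated Gromov--Witten/Noether--Lefschetz relation of Section~\ref{sec:GWNL theory} to the smooth family $\tilde{\pi}$, obtaining $\langle \alpha; H^i \rangle^{\tilde{\CX},\textup{mc}}_{g,d} = \sum_{m,s}\NL^{\tilde{\pi}}_{m,s,d}\,\langle \alpha(-1)^g\lambda_g; p \rangle^{X,\textup{mc}}_{g,m,s,d}$, where $H^i = p(\CL)$ and $X$ is the generic $K3^{[2]}$-type fiber. The lemma then asserts that the fiber-class invariants of the singular family obey the same relation with the extended numbers $\NL^{\pi}_{m,s,d} = \tfrac12\NL^{\tilde{\pi}}_{m,s,d}$, which is consistent with the defining normalization $\NL^{\pi}=\tfrac1k\NL^{\tilde{\pi}}$ for $k=2$. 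The genuine work remains the local ODP computation; everything else is bookkeeping of the degree-$2$ cover.
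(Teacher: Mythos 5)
There is a genuine gap, and it sits exactly where you locate the ``main obstacle.'' The paper's proof begins by reducing the lemma to the \emph{raw} identity
\[
\langle \alpha; H^i \rangle^{\CX}_{g,d} \;=\; \tfrac{1}{2}\, \langle \alpha; H^i \rangle^{\widetilde{\CX}}_{g,d},
\]
which suffices by pure linearity: the multiple-cover subtraction is the same finite M\"obius-type linear combination of raw invariants in degrees $d/k$ on both sides, so the mc identity follows from the raw identity applied to every divisor of $d$. You instead assert that the raw identity \emph{fails} at the $192$ singular fibers and that the mc subtraction is ``designed to cancel precisely such contributions.'' This claim is both unsubstantiated and false. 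The subtraction defining $\langle\,\cdot\,\rangle^{\textup{mc}}$ is a universal linear combination over divisors $k \mid d$, introduced to remove multiple-cover contributions of the reduced theory of the fibers (in the sense of Conjecture~\ref{conjecture:MC}); it bears no relation to local contributions of singular fibers of a pencil, and there is no mechanism by which it could cancel such a geometric discrepancy --- if the raw identity failed, the mc identity would in general fail as well. Your proposal then defers ``the genuine work'' to an unspecified ``local ODP computation,'' so even on its own terms the argument is incomplete.

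What the paper actually does at the singular fibers is prove that there is \emph{no} discrepancy: following \cite[Lem.~4]{MP}, the comparison is a conifold transition taken relative to the K3 surface $S$, and the extra components arising in the degeneration argument are bundles over $S$ (with fiber $\p(\CO_{\p^1}(-1)\oplus\CO_{\p^1}(-1)\oplus\CO_{\p^1})$ or a quadric in $\p^4$). The existence of the holomorphic symplectic form on $S$ forces any nontrivially contributing curve classes on these components to be fiber classes, after which the argument of Li--Ruan \cite{LR} applies verbatim and yields the exact equality of raw invariants, singular fibers included. Your first paragraph (the factor $2$ over the unramified locus) is correct but is the trivial part; the content of the lemma is precisely the statement you concede you cannot prove.
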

\begin{proof}
We need to prove that
\[ \langle \alpha; H^i \rangle^{\CX}_{g,d} = \frac{1}{2} \langle \alpha; H^i \rangle^{\widetilde{\CX}}_{g,d}. \]
This follows from the same argument as in \cite[Lem. 4]{MP}:
The conifold transition is taken relative to the K3 surface $S$.
The extra components which appear in the degeneration argument is a bundle (with fiber $\p(\CO_{\p^1}(-1) \oplus \CO_{\p^1}(-1) \oplus \CO_{\p^1})$ or a quadric in $\p^4$)
over the K3 surface $S$. Because of the existence of the symplectic form, it follows that the curve classes which may contribute non-trivially have to be fiber classes.
The argument of \cite{LR} then goes through without change.
\end{proof}

By the lemma the Gromov-Witten/Noether-Lefschetz relation of Propsition~\ref{prop:GWNLrelation} extends to the family $\pi$.
Specializing to genus $0$ we obtain that
\begin{equation}
\blangle \alpha ; H^i \brangle^{\CX, \text{mc}}_{0,d}
=
\sum_{m,s} \blangle \alpha ; H^i \brangle^{X,\text{mc}}_{0,m,s,d} \NL^{\pi}_{m,s,d}. \label{abcc}
\end{equation}
The left hand side can be computed using the mirror symmetry formulas of Section~\ref{sec:mirror_symmetry}.
The primitive invariants appearing on the right hand side are given by Remark~\ref{rmk:primitive evaluations}.
The Noether-Lefschetz numbers $\NL_{s,d}$ and hence their refinements $\NL_{m,s,d}$
are determined by \cite{LZ}and the formulas in Section~\ref{subsec:refined NL}.
By using a computer (see the author's website for the code)
one finds that for degree $6, 8, 9, 15$ this equation uniquely determines the invariants $f_{\beta}, g_{\beta}$ for $\beta = m \alpha$
in cases $(m,\alpha) \in \{ (2,0), (2,3/2), (3,3/2), (5,3/2) \}$.
% where $m=2$ and $\alpha \cdot \$m=2,3,5$ and $\alpha \cdot \alpha = 3/2$
Moreover, one checks then that for these degrees we have
\begin{equation} \label{abccheck} 
\blangle \alpha ; H^i \brangle^{\CX, \text{mc}}_{0,d}
=
\sum_{s} \NL_{s,d} \cdot \blangle \alpha ; H^i \brangle^{X,\text{mc}}_{0,1,s,d}.
\end{equation}
which implies that Conjecture~\ref{conjecture:MC_intro} holds in these cases.
Together with Proposition~\ref{prop:first cases} this proves Proposition~\ref{prop:first cases intro}.
(As mentioned in the introduction, we have checked \eqref{abccheck} up to degree $38$, which provides plenty of evidence for Conjecture~\ref{conjecture:MC_intro}.)

\subsection{Debarre-Voisin fourfolds}
We consider a generic pencil of Debarre-Voisin fourfolds
\[ \CX \subset \Gr(6,10) \times \p^1, \quad \pi : \CX \to \p^1. \]
The case is very similar to the case of cubic fourfolds.
As shown in Appendix~\ref{appendix:Song} (by J. Song)
we have the same description of the singular fibers as in the Fano case.
In particular, we may use the same double cover construction
and conclude the Gromov-Witten/Noether-Lefschetz relation \eqref{abcc} for $\pi$.

We want to determine the generating series of Noether-Lefschetz numbers
\[ \varphi(q) = \sum_{D \geq 0} q^{D/11} \NL^{\pi}(D) \]
where $D$ runs over squares modulo $11$, and we used the notation of Section~\ref{ex: prime discriminant}.
Recall that we have
\[ \NL^{\pi}_{s,d} = \NL^{\pi}\left(D \right), \  \text{ where }\  D = -\frac{1}{4} \det \begin{pmatrix} 22 & d \\ d & s \end{pmatrix}. \]

We first prove the following basic invariants:
\begin{lemma} \label{lemma:ic}
$\NL^{\pi}(0) = -10$ and $\NL^{\pi}(11) = 640$. 
\end{lemma}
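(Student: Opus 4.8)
The plan is to identify each of the two numbers with a concrete algebro-geometric quantity attached to the total space $\CX\subset\Gr(6,10)\times\p^1$ and then compute it. Recall that in the notation of Section~\ref{subsec:NL divisors first type} we have $\NL(0)=y_{0,0}=c_1(\CK^{\ast})$ and, since $-\tfrac14\det\binom{22\ \ 0}{0\ -2}=11$, also $\NL^\pi(11)=\NL^\pi_{-2,0}$. Thus the first number is $\int_{\p^1}\iota_\pi^\ast c_1(\CK^\ast)=-\deg\lambda$, minus the degree of the Hodge line bundle $\lambda$ (fibre $H^{2,0}(X_t)=\BC\sigma_t$), while the second counts, with the multiplicity conventions fixed by the double-cover construction, the fibres carrying a $(-2)$-class orthogonal to the polarization, i.e. the singular fibres of the pencil. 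Here I use Appendix~\ref{appendix:Song}, which shows that the singular fibres of $\pi$ are exactly of the same type (one ordinary double point along a K3 surface) as in the cubic case, so that the reductions of the Fano case --- the double cover $\epsilon:C\to\p^1$ ramified at the base points, the small resolution, and the identity $\NL^\pi=\tfrac12\NL^{\widetilde\pi}$ --- apply verbatim.

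For $\NL^\pi(0)$ I would compute $\deg\lambda$ directly on $\CX$. Since $\CX$ is the zero locus of a generic section of $E=\wedge^3\CU^\vee\boxtimes\CO_{\p^1}(1)$, a bundle of rank $\binom{6}{3}=20$, adjunction gives $\omega_\CX=(\omega_{\Gr\times\p^1}\otimes\det E)|_\CX$. Using $\det(\wedge^3\CU^\vee)=(\det\CU^\vee)^{\binom{5}{2}}=\CO_{\Gr}(10)$, hence $\det E=\CO_{\Gr}(10)\boxtimes\CO_{\p^1}(20)$, together with $\omega_{\Gr(6,10)}=\CO_{\Gr}(-10)$, one finds $\omega_\CX=\pi^\ast\CO_{\p^1}(18)$, so $\omega_{\CX/\p^1}=\pi^\ast\CO_{\p^1}(20)$ and $\pi_\ast\omega_{\CX/\p^1}=\CO_{\p^1}(20)$. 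Because each fibre is holomorphic-symplectic with $h^{2,0}=h^{4,0}=1$, the squaring map $\sigma\mapsto\sigma^2$ identifies $\lambda^{\otimes2}\cong\pi_\ast\omega_{\CX/\p^1}$; since the symplectic form extends across an ordinary double point degeneration this identification is global, so $\deg\lambda=\tfrac12\cdot20=10$ and $\NL^\pi(0)=-\deg\lambda=-10$ (the double-cover factor $\tfrac12$ being absorbed, as $\lambda$ pulls back from $\p^1$ to $C$).

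For $\NL^\pi(11)$ I would count the singular fibres $N$ topologically. As $\CX$ is smooth and, by Appendix~\ref{appendix:Song} and genericity of the pencil, $\pi$ has exactly $N$ Morse critical points (one node per singular fibre), the Lefschetz Euler-characteristic formula in relative dimension $4$ gives $\chi(\CX)=2\chi(X)-N$, where $X$ is a smooth Debarre--Voisin fourfold. With $\chi(X)=\chi(K3^{[2]})=324$ this reads $N=648-\chi(\CX)$. It remains to evaluate $\chi(\CX)=\int_{\Gr(6,10)\times\p^1}c_{20}(E)\,\big[c(T(\Gr\times\p^1))/c(E)\big]_{5}$, a Chern-number computation on the Grassmannian which I expect to yield $\chi(\CX)=8$ and hence $N=\NL^\pi(11)=640$. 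The precedent $\NL^\pi(3)=192$ in the cubic case confirms that the double-cover factor is already absorbed, so that $\NL^\pi(11)$ equals $N$ on the nose.

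The conceptual reductions above are essentially formal once Appendix~\ref{appendix:Song} is in hand, so the main obstacle is the explicit intersection-theoretic computation of $\chi(\CX)$ (equivalently, the degree of the Debarre--Voisin discriminant in $\p(\wedge^3 V_{10}^\vee)$): this is a top Chern class integral on the $24$-dimensional Grassmannian $\Gr(6,10)$ that must be carried out by Schubert calculus or equivariant localization, and getting the bookkeeping of $\wedge^3\CU^\vee$ right is where the real work lies.
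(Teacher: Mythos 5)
Your computation of $\NL^{\pi}(0)$ is correct and takes a genuinely different route from the paper. The paper identifies $\iota_\pi^{\ast}\CK^{\ast}$ with $\CL = R^2\pi_{\ast}\CO_{\CX}$, writes $R\pi_{\ast}\CO = \CO_{\p^1} + \CL + \CL^{\otimes 2}$ in $K$-theory, and extracts $3c_1(\CL) = -30$ from a Grothendieck--Riemann--Roch computation carried out by software. Your route --- adjunction for the zero locus of $\wedge^3\CU^{\vee}\boxtimes\CO_{\p^1}(1)$, giving $\omega_{\CX/\p^1}=\pi^{\ast}\CO_{\p^1}(20)$, followed by the squaring isomorphism $\lambda^{\otimes 2}\cong \pi_{\ast}\omega_{\CX/\p^1}$ --- replaces the Riemann--Roch step by a closed-form determinant calculation and needs no software. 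Both arguments rest on the same structural input (the square of the $h^{2,0}$-bundle is the $h^{4,0}$-bundle, extended across the $640$ singular fibres, with the double-cover factor absorbed by pullback), and both treat that extension at the same level of rigor, so this half of your proposal stands and is arguably cleaner.

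Your treatment of $\NL^{\pi}(11)$ contains a genuine error. You model $\pi$ as a Lefschetz fibration with one Morse critical point per singular fibre and invoke $\chi(\CX)=2\chi(X)-N$. But, as you yourself record in your first paragraph (and as Appendix~\ref{appendix:Song} shows), the singular fibres are not nodal at a point: they have ordinary double points \emph{along a smooth K3 surface}, so the critical locus of $\pi$ is a disjoint union of $640$ surfaces and the Morse count does not apply. The correct bookkeeping is the one the paper uses: a singular fibre $X_t$ is obtained from its smooth hyperk\"ahler resolution $\Bl_S X_t$ (Euler characteristic $324$) by contracting a conic bundle over $S$ (Euler characteristic $48$) down to $S$ (Euler characteristic $24$), so $\chi(X_t)=324-48+24=300$; each singular fibre therefore contributes a defect of $24$, not $1$, and $\chi(\CX)=(2-N)\cdot 324+300N=648-24N$. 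The Chern-class integral you set up is the right way to compute $\chi(\CX)$, but it evaluates to $-14712$, not $8$; feeding the true value into your formula $N=648-\chi(\CX)$ would output $N=15360$, whereas the correct relation gives $N=(648+14712)/24=640$. So your expected value $\chi(\CX)=8$ is not what your own integral produces, the two mistakes do not cancel, and the argument fails precisely at the step you defer as "the real work." Once you replace the defect-$1$ formula by the defect-$24$ formula (using the geometry of the singular fibres from Appendix~\ref{appendix:Song}), your plan coincides with the paper's proof of this half.
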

\begin{proof}
We have
\[ \NL^{\pi}(0) = \int_{\p^1} \iota_{\pi}^{\ast} c_1(\CK^{\ast}) \]
where $\CK \to \CD_{L} / \Gamma_M$ is the descent of the tautological bundle $\CO(-1)$.
It is well-known that $\iota_{\pi}^{\ast} \CK$ corresponds to the Hodge bundle $\pi_{\ast} \Omega^2_{\pi}$.
Hence $\iota_{\pi}^{\ast} \CK^{\ast}$ is isomorphic to
\[ \CL = R^2 \pi_{\ast} \CO_{\CX} \]
which has fiber $H^2(\CX_t,\CO_{\CX_t})$ over $t \in \p^1$. In $K$-theory we have
\[ R \pi_{\ast} \CO = \CO_{\p^1} + \CL + \CL^{\otimes 2} \]
By a Riemann-Roch calculation (using the software package \cite{Chow}) we find that
\[ 3 c_1(\CL) = \int \ch_1( R \pi_{\ast} \CO ) = \int \pi_{\ast}( \td_{\CX} / \td_{\p^1} ) = -30 \]

The number $\NL^{\pi}(11) = \NL_{-2,0}$ is the number of singular fibers.
To compute these, we recall that the singular locus of every singular fiber is a smooth K3 surface
and the blowup along the singular locus has exceptional divisor a $\p^1$-bundle over the K3 surface.
%with local neighbour hood locally the product of a surface and a $A_1$-singulariy.
Hence the topological Euler characteristic of a singular fiber is $300$. 
By a standard computation (using \cite{Chow}) the topological Euler number of the total family is $e(\CX) = -14712$.
Hence if $\delta$ is the number of singular fibers we get
\[ -14712 = e(\CX) = 324 (2-\delta) + \delta \cdot 300, \quad \text{ hence } \delta = 640. \]
The last part also follows from \cite[Proof of Prop.3.1]{DV}.
\end{proof}

To further constrain the Noether-Lefschetz numbers we argue as follows.
By a computer check (see again the author's webpage)
the Gromov-Witten/Noether-Lefschetz relation
\begin{equation}
\blangle H^3 \brangle^{\CX, \text{mc}}_{0,d}
=
\sum_{m, s} \blangle H^3 \brangle^{X,\text{mc}}_{0,m,s,d} \NL^{\pi}_{m,s,d}. \label{abc}
\end{equation}
involves for $d \leq 13$ only terms for which the multiple cover conjecture is known by Proposition~\ref{prop:first cases}.
Hence for $d \leq 13$ we may rewrite it %as in \eqref{bps formulation:mc known} as:
\[
\blangle H^3 \brangle^{\CX, \text{mc}}_{0,d}
=
\sum_{s} \NL_{s,d} \cdot \blangle H^3 \brangle^{X,\text{mc}}_{0,1,s,d}.
\]
The left hand side can be computed using the mirror symmetry formalism. The primitive invariants on the right are given in Remark~\ref{rmk:primitive evaluations}.
For $1 \leq d \leq 5$ %and some patience\footnote{The computation required 3 days on the author's laptop.} 
one obtains:
\begin{equation} \label{eqnsa2}
\begin{aligned}
0 & = 264 \NL(3) \\
130680 & = 3960 \NL(1) + 132 \NL(12) \\
0 & = 792 \NL(5) \\
3020160 & = 264 \NL(15) + 7920 \NL(4) \\
0 & = 1320 \NL(9)
\end{aligned}
\end{equation}
Using equations \eqref{eqnsa2} and $\NL(0) = -10$, and employing Williams' program \cite{Williams} we find that:
\begin{equation} \NL(1) = \NL(3) = \NL(4) = \NL(5) = \NL(9) = 0. \label{vanishing} \end{equation}
This in turn determines the modular form $\Phi^{\pi}$ uniquely.
(One independently checks that indeed $\NL(11) = 640$ matches the second result of Lemma~\ref{lemma:ic}.)

Theorem~\ref{thm:DV NL series} follows from this, from Proposition~\ref{prop:phi modularity} and straightforward linear algebra
(by \cite[Sec.12]{Borcherds} we have that $E_1, \Delta_{11}, E_3$ generate the ring of modular forms for character $\chi_{11}$:
\[ \bigoplus_{k \geq 0} \mathrm{Mod}_k(\Gamma_0(11), \chi_{11}^{k}) = \BC[ E_1, \Delta_{11}, E_3 ] / (\text{relations})\ ). \]

\begin{proof}[Proof of Corollary~\ref{corHLS}]
Define the Noether-Lefschetz numbers of first type: % (taken with respect to $\CC_{2e}$):
\[ \CC^{\pi}_{2e} = \frac{1}{2} \int_{C} \iota_{\tilde{\pi}}^{\ast} \CC_{2e}. \]
These are related to the $\NL^{\pi}(D)$ by Proposition~\ref{prop:NL first and second}.
The proof hence follows from \eqref{vanishing} and Lemma~\ref{lemma:123} below.
\end{proof}

\begin{lemma} \label{lemma:123}
$\CC_{2e}$ is HLS if and only if $\CC^{\pi}_{2e} = 0$.
\end{lemma}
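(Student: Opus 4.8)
The plan is to read $\CC^\pi_{2e} = \tfrac12 \int_C \iota_{\tilde{\pi}}^\ast \CC_{2e}$ as the intersection number of the curve $\Gamma \subset \CM_H$ swept out by the pencil with the divisor $\CC_{2e}$, and to compare this with the position of $\CC_{2e}$ relative to the period map $p : \CM_{\mathrm{DV}} \dashrightarrow \CM_H$. Since $p$ is birational and restricts to an isomorphism onto an open set $U \subset \CM_H$ over the locus $\CM_{\mathrm{DV}}^{\mathrm{sm}}$ of smooth Debarre--Voisin fourfolds, the divisor $\CC_{2e}$ is HLS exactly when its generic point avoids $U$; equivalently, when the Noether--Lefschetz locus $W_{2e} = \{ [X] \in \CM_{\mathrm{DV}}^{\mathrm{sm}} : p([X]) \in \CC_{2e} \}$ fails to dominate $\CC_{2e}$, i.e.\ has codimension $\geq 2$ in $\CM_{\mathrm{DV}}$. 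As the pencil is generic, $\Gamma \not\subset \CC_{2e}$, so $\iota_{\tilde{\pi}}^\ast \CC_{2e}$ is an effective $0$-cycle on $C$ and $\CC^\pi_{2e} = 0$ if and only if $\Gamma \cap \CC_{2e} = \varnothing$.

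For the implication ``$\CC_{2e}$ not HLS $\Rightarrow \CC^\pi_{2e} \neq 0$'' I would argue that $W_{2e}$ is then a genuine divisor in $\CM_{\mathrm{DV}}$ (nonempty Noether--Lefschetz loci in the period domain are pure of codimension one), whose preimage in $\p(\wedge^3 V_{10}^\vee)$ is a nonempty hypersurface. A general line meets this hypersurface, and does so at points parametrizing smooth Debarre--Voisin fourfolds where $p$ is a local isomorphism; the corresponding smooth fibers of $\pi$ therefore have period on $\CC_{2e}$, producing a positively counted intersection, so $\CC^\pi_{2e} > 0$.

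For the converse I would split the points of $\Gamma$ into images of smooth fibers, which lie in $U$, and images of the finitely many singular fibers. If $\CC_{2e}$ is HLS then $\CC_{2e} \cap U$ has codimension $\geq 1$ in $\CC_{2e}$, hence codimension $\geq 2$ in $\CM_H$, and a general pencil avoids it, so no smooth fiber contributes. To control the singular fibers I would invoke the description of Appendix~\ref{appendix:Song}: each is nodal along a K3 surface $S$ and resolves to $\Hilb_2(S)$, and for a general pencil the associated K3 surfaces are general, so $\Hilb_2(S)$ has Picard rank two with vanishing class $\nu$. By Lemma~\ref{lemma:ic} this class satisfies $\nu\cdot\nu = -2$ and $\nu \cdot H = 0$ (this is precisely the identity $\NL^\pi(11) = \NL_{-2,0}$ counting the singular fibers), so the period of each $\Hilb_2(S)$ lies on $\CC_{22}$ and on no other $\CC_{2e}$ with $e \geq 1$. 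Hence for every HLS divisor with $e \neq 11$ the singular fibers contribute nothing, giving $\Gamma \cap \CC_{2e} = \varnothing$ and $\CC^\pi_{2e} = 0$. It then remains to check that $\CC_{22}$ itself is not HLS: a general fourfold carrying a $(-2)$-class orthogonal to $H$ of this discriminant is a smooth Debarre--Voisin fourfold, the nodal members (whose resolutions are the $\Hilb_2(S)$ above) forming a proper subvariety of $\CC_{22}$, so the generic point of $\CC_{22}$ lies in $U$; this is consistent with $\CC^\pi_{22} \neq 0$.

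The main obstacle is this last direction, and within it the treatment of the singular fibers: one must know both the precise degeneration geometry (so that the resolved fiber is $\Hilb_2(S)$ and its vanishing class has invariants $(s,d) = (-2,0)$) and that a general pencil keeps the associated K3 surfaces general, so that the resolved fibers meet only the single non-HLS divisor $\CC_{22}$. This is exactly where Appendix~\ref{appendix:Song} and Lemma~\ref{lemma:ic} enter. The remaining ingredients---that a general line avoids a fixed codimension-$\geq 2$ locus and meets a fixed hypersurface at general smooth points, and the dictionary identifying the HLS condition with the failure of $W_{2e}$ to dominate $\CC_{2e}$---are routine.
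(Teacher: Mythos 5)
Your overall strategy (intersect a generic pencil-curve with $\CC_{2e}$ and sort contributions by fiber type) is workable in spirit, but your proof rests on a dictionary for ``HLS'' that is genuinely wrong, and the error surfaces exactly where you try to patch it, at $\CC_{22}$. You declare that $\CC_{2e}$ is HLS if and only if its generic point avoids $U = p(\CM_{\mathrm{DV}}^{\mathrm{sm}})$, the image of the locus of \emph{smooth} Debarre--Voisin fourfolds. The correct criterion, which the paper establishes, is that $\CC_{2e}$ is HLS if and only if it is not dominated by a divisor of the larger locus $\CM_{DV}'$ of Debarre--Voisin varieties that are smooth \emph{or} have ordinary double points along a smooth K3 surface (equivalently, $p^{\ast}\CC_{2e}=0$ in $A^1(\CM_{DV}')$, where $p$ is the extended period morphism). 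The two criteria disagree precisely for $\CC_{22}$: by Appendix~\ref{appendix:Song} the irreducible discriminant divisor of $\CM_{DV}$ maps onto $\CC_{22}$, so $\CC_{22}$ is \emph{not} HLS (it is dominated by a non-exceptional divisor), yet its generic point does \emph{not} lie in your $U$. Indeed, the generic member of $\CC_{22}$ is the period of the resolution $\tilde X$ of a general nodal Debarre--Voisin fourfold; on $\tilde X$ the Pl\"ucker class $H$ has degree zero on the exceptional conics, hence is big and nef but not ample, and on the only other birational model $\Hilb_2(S)$ the pullback of $H$ is not even nef (Remark in Appendix~\ref{appendix:Song}). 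Since fibers of the period map consist of birational varieties and a smooth Debarre--Voisin fourfold has $H$ very ample, no smooth Debarre--Voisin fourfold has this period. So your claim that ``a general fourfold carrying a $(-2)$-class orthogonal to $H$ of this discriminant is a smooth Debarre--Voisin fourfold'' is false, and with it your proof that $\CC_{22}$ is not HLS collapses. Worse, under your dictionary $\CC_{22}$ would be classified as HLS, and then the lemma itself would fail for it: as you correctly note, the $640$ singular fibers have periods on $\CC_{22}$, forcing $\CC^{\pi}_{22}>0$.

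The paper's proof avoids all of this, and in particular needs no analysis of which Noether--Lefschetz divisors the singular fibers lie on. It works in $A^1(\CM_{DV}')$: first, since the resolved period map $\tilde p$ is birational, Zariski's main theorem shows that an irreducible divisor $D\subset \CM_H$ has a \emph{unique} irreducible divisor of $\widetilde{\CM}_{DV}$ dominating it, with $\tilde{p}^{\ast}\tilde{p}_{\ast}E = E$; this yields ``$D$ is HLS $\iff p^{\ast}D = 0$ in $A^1(\CM_{DV}')$''. Second, the $\mathrm{SL}(V_{10})$-bundle $U \to \CM_{DV}'$ with $U\subset\p(\wedge^3 V_{10}^{\vee})$ open of codimension-$\geq 2$ complement identifies vanishing of this divisor class with vanishing of its degree on a generic line, i.e.\ with $\CC^{\pi}_{2e}=0$. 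If you want to salvage your argument you must (a) replace $U$ by $p(\CM_{DV}')$ throughout, so that nodal fibers are treated on the same footing as smooth ones (then the singular fibers' contribution to $\CC^{\pi}_{22}$ is no longer an anomaly but exactly witnesses that $\CC_{22}$ is dominated by the discriminant divisor), and (b) justify the resulting dictionary by the uniqueness statement coming from Zariski's main theorem: birationality of $p$ alone does not rule out that a divisor is simultaneously the image of an exceptional divisor and dominated by honest Debarre--Voisin loci.
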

\begin{proof}
Let $\CM_{DV}' \subset \CM_{DV}$ be the open locus consisting of Debarre-Voisin varieties
which are either smooth or singular with ordinary double point singularity along a smooth K3 surface (which holds on an open subset of the irreducible discriminant divisor \cite{BS}).
By \cite{DV}, the complement of $\CM_{DV}'$ has codimension $2$.
The period map extends to a morphism $p : \CM_{DV}' \to \CM_H \subset \overline{\CM}_H$.
Let also $\tilde{p} : \widetilde{\CM}_{DV} \to \overline{\CM}_H$ be the resolution of the rational period map $\CM_{DV} \dashrightarrow \overline{\CM}_H$.
We view $\CM_{DV}'$ as a open subvariety of $\widetilde{\CM}_{DV}$,
\[ \CM_{DV}' \overset{j}{\hookrightarrow} \widetilde{\CM}_{DV} \xrightarrow{\tilde{p}} \overline{\CM}_H. \]

We need the following basic fact: Assume $D = \tilde{p}_{\ast} E$ for some irreducible divisor $E \subset \widetilde{\CM}_{DV}$.
Since $\tilde{p}$ is birational \cite{OGrady}, by Zariski's main theorem \cite[Cor.11.4]{Hartshorne} $E$ is the unique irreducible divisor in $\widetilde{\CM}_{DV}$ which maps to $D$
(otherwise, the generic point of the image would have more than two preimages, hence the fiber would not be connected) and by the same argument the map $E \to D$ is birational. Hence $\tilde{p}^{\ast} \tilde{p}_{\ast} E = E$. This yields
\[ p^{\ast} D = j^{\ast} \tilde{p}^{\ast} D =  j^{\ast} \tilde{p}^{\ast} \tilde{p}_{\ast} E = j^{\ast} E \]
We find that $D$ is HLS (term on the right vanishes for some necessarily unique $E$) if and only if $p^{\ast} D = 0$ in $A^1(\CM_{DV}')$.

There is a $\mathrm{SL}(V_{10})$-bundle $\pi : U \to \CM_{DV}'$ for an open $U \subset \p(\wedge^3 V_{10}^{\ast})$ with complement of codimension $\geq 2$.
Hence we also have $p^{\ast} D = 0$ if and only if $\pi^{\ast} p^{\ast} D = 0$ if and only if $\pi^{\ast} p^{\ast} D \cdot L = 0$ for a generic line $L$ in $U$.
\end{proof}

%
%\begin{rmk} \label{remark:HLS combinatorics}
%We comment how Theorem~\ref{thm:DV NL series}
%can be used to show that
%$\CC_{2}, \CC_{6}, \CC_{8}, \CC_{10}, \CC_{18}$
%are the only ones among the $\CC_{2e}$ which are HLS.
%This can be checked easily for any finite case using Lemma~\ref{lemma:123}, Proposition~\ref{prop:NL first and second} and a computer.
%To prove it in general one needs to get a control over the coefficients of $\varphi^{\pi}$.
%One may probably argue as follows: Write $\Phi^{\pi} = E + f$ where $E$ is an Eisenstein series and $f$ is a cusp form.
%The Eisenstein series $E$ can be explicitly given (\cite{Williams}); their $q^n$ coefficient is asymptotic to $O(n^{10})$.
%On the other hands, it is well-known that Fourier-coefficients of weight $11$ cusps forms are bounded by $O(n^{11/2})$.
%One may use this to reduce to proving the statement for the Eisenstein coefficients, for which the relation Proposition~\ref{prop:NL first and second} can be
%analysed concretely.
%\end{rmk}

\appendix
\section{A multiple cover rule for abelian surfaces}
In this appendix we state a conjectural rule that
expressed reduced Gromov-Witten invariants of an abelian surfaces for any curve class $\beta$
in terms of invariants for which $\beta$ is primitive.
The conjectural formula extends a proposal of \cite{BOPY} for the abelian surface analogue of the Katz-Klemm-Vafa formula.
As in the hyperk\"ahler case the conjecture can be reinterpreted as saying that after
subtracting multiple covers, the Gromov-Witten invariants are independent of the divisibility.

\subsection{Monodromy}
Recall that the cohomology of an abelian surface is described by
\[ H^i(A,\BQ) = \bigwedge^i H^1(A,\BQ). \]
The class of a point $\pt \in H^4(A,\BZ)$ thus defines a canonical element
\[ \pt \in \bigwedge^4 H^1(A,\BQ). \]
An isomorphism of abelian groups $\varphi : H^1(A,\BZ) \to H^1(A',\BZ)$
extends naturally to a morphism of the full cohomology $H^{\ast}(A,\BZ)$
by setting $\varphi|_{H^i(A,\BZ)} = \wedge^i \varphi$.
One has that $\varphi$ is a parallel transport operator (i.e.\, the parallel transport along a deformation from $A$ to $A'$ through complex tori)
if and only if $\varphi$ preserves the canonical element \cite[Sec.1.10]{BirkLang}.

The Zariski closure of the space of parallel transport operators 
is the set of $\BC$-vector spaces homomorphisms:
\[ M_{A,A'} = \{ \varphi : H^1(A,\BC) \to H^1(A',\BC) | \varphi(\pt) = \pt' \}. \]
%Any $\varphi \in M_{A,A'}$ extends naturally to a morphism of the full cohomology
%by setting $\varphi|_{H^i(A,\BC)} = \wedge^i \varphi$.
It follows that the induced map $\wedge^2 \varphi: H^2(A,\BC) \to H^2(A',\BC)$
preserves the canonical inner product.
If $A'=A$ the above just says that the monodromy group is $\mathrm{SL}(H^1(A,\BZ))$
and its Zariski closure $\mathrm{SL}(H^1(A,\BC))$.

\subsection{Multiple cover rule}
Let $\beta \in H_2(A,\BZ)$ be an effective curve class.
For any divisor $k | \beta$ choose an abelian variety $A_k$ and a morphism $\varphi_k : H^1(A,\BR) \to H^1(A_k,\BR)$
preserving the canonical element such that the induced morphism
\[ \varphi_k = \bigoplus_{i} \wedge^i \varphi_k : H^{\ast}(A,\BR) \to H^{\ast}(A_k, \BR) \]
takes $\beta/k$ to a \emph{primitive} effective curve class.
%The multiple cover conjecture now takes the same form:

Let also $\alpha \in H^{\ast}(\Mbar_{g,n})$ be a tautological class
and $\gamma_i \in H^{\ast}(A,\BR)$ be arbitrary insertions.

\begin{conjecture} \label{conjecture:abelian surfaces}
For any effective curve class $\beta \in H_2(A,\BZ)$.
\begin{multline*}
\Big\langle \alpha ; \gamma_1, \ldots, \gamma_n \Big\rangle^A_{g,\beta}
=
\sum_{k | \beta} k^{3g-3+n - \deg(\alpha)} \Big\langle \alpha ; \varphi_k(\gamma_1), \ldots, \varphi_k(\gamma_n) \Big\rangle^{A_k}_{g,\varphi_k(\beta/k)}.
\end{multline*}
\end{conjecture}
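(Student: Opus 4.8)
The plan is to transport the Noether--Lefschetz strategy of the main body to abelian surfaces, with $\mathrm{SL}(H^1(A,\BZ))$ playing the role of $\widetilde{O}^{+}(H^2(X,\BZ))$. First I would establish the abelian analogue of the invariance of Section~\ref{subsec:invariance}: by deformation invariance of the reduced class, together with the description of the monodromy group and its Zariski closure $\mathrm{SL}(H^1(A,\BC))$ recalled above, the pushforward $\ev_{\ast}(\tau^{\ast}(\alpha) \cap [\Mbar_{g,n}(A,\beta)]^{\mathrm{red}})$ must lie in the invariant part of $H^{\ast}(A^n,\BQ)$ under the subgroup stabilizing $\beta$. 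This pins the invariants down in terms of the deformation invariants of the pair $(A,\beta)$, namely the divisibility $\div(\beta)$ and the polarization type of $\beta/\div(\beta)$. In contrast to the $K3^{[n]}$ case there is no nontrivial discriminant contribution for an abelian surface, which is exactly why the sign $(-1)^{[\beta]+[\beta/k]}$ is absent from the conjectural formula.

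Next I would set up a Gromov--Witten/Noether--Lefschetz relation for a $1$-parameter family of quasi-polarized abelian surfaces in the style of Proposition~\ref{prop:GWNLrelation}, with the Humbert surfaces playing the role of the Noether--Lefschetz divisors: the invariants of the total threefold in fiber classes should decompose as a sum of Humbert intersection numbers against the reduced fiber invariants, and for a sufficiently generic family this linear system should invert. Since the fiber invariants are deformation invariant, I would evaluate them on the convenient model $A = E_1 \times E_2$, where the reduced theory becomes accessible through the fibration structure and the quasimodular Gromov--Witten theory of elliptic curves. The multiple-cover contributions, and in particular the scaling factor $k^{3g-3+n-\deg(\alpha)}$ together with the isometries $\varphi_k$, should then be produced from the $k$-isogenies and multiplication-by-$k$ maps on the target, which organize the imprimitive classes $\beta=k(\beta/k)$ in terms of primitive ones.

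The main obstacle will be twofold. On the geometric side, the reduced obstruction theory of an abelian surface carries an additional $2$-dimensional translation symmetry, so the invariants must first be rigidified---by point insertions or by passing to the quotient by translation---and the entire Noether--Lefschetz bookkeeping must be carried out compatibly with this rigidification; this is the essential new difficulty relative to the $K3$ setting. On the computational side, abelian surfaces are not cut out as complete intersections in Fano GIT quotients, so the mirror-symmetry input of Section~\ref{sec:mirror_symmetry} that made the ambient invariants of the total space computable is simply unavailable. I therefore expect the decisive step to be the direct evaluation of the relevant invariants via the product structure $E_1 \times E_2$ and the isogeny combinatorics, and the verification that these isogeny contributions reproduce precisely the degree factors and the insertions $\varphi_k(\gamma_i)$ demanded by the conjecture.
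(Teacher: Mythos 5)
The statement you are addressing is Conjecture~\ref{conjecture:abelian surfaces} (Conjecture B), and the first thing to note is that the paper does not prove it: it is stated as a conjecture in the appendix, extending the proposal of \cite{BOPY}, and the only support offered there is a \emph{consistency check}, namely that specializing to $A = E \times E'$, choosing the explicit isometries $\varphi_k \in \mathrm{SL}(H^1(A,\BQ))$, and inserting the FLS classes shows that Conjecture~\ref{conjecture:abelian surfaces} \emph{implies} Conjecture A of \cite{BOPY} for the invariants $N^{\text{FLS}}_{g,\beta}$ (itself an open conjecture). So there is no paper proof to compare against, and any correct argument on your part would be new mathematics rather than a reconstruction.

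What you have written, however, is a research program, not a proof, and its decisive steps are exactly the open problems. First, the Gromov--Witten/Noether--Lefschetz step does not transplant from Section~\ref{sec:GWNL theory}: the relation of \cite{MP} (and Proposition~\ref{prop:GWNLrelation}) rests on the fact that the \emph{ordinary} invariants of the total space of a K3 fibration in fiber classes are nontrivial and computable; for an abelian-surface fibration they vanish identically, because each fiber carries a two-dimensional translation group (equivalently, the holomorphic $1$-forms of the fibers produce surviving cosections/trivial quotients of the obstruction theory), so the naive relation reads $0=0$. One must rigidify, as you note, but no analogue of the Maulik--Pandharipande relation is established for the rigidified or translation-quotiented theory, nor is there a Borcherds-type modularity statement for the Humbert-divisor intersection numbers that would make the resulting linear system finite-dimensional and invertible; flagging this as ``the essential new difficulty'' does not discharge it. Second, your proposed replacement for the missing mirror-symmetry input --- ``direct evaluation via $E_1 \times E_2$ and isogeny combinatorics'' --- is circular: organizing imprimitive classes on $E \times E'$ by isogenies and checking that they reproduce the factors $k^{3g-3+n-\deg(\alpha)}$ is precisely the content of Conjecture A of \cite{BOPY}, which the paper's appendix derives as a \emph{consequence} of Conjecture~\ref{conjecture:abelian surfaces}, not as independent input. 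Third, the monodromy-invariance step you begin with (the abelian analogue of Section~\ref{subsec:invariance}) can indeed be carried out, but it only constrains the invariants of a fixed class $\beta$ in terms of $\div(\beta)$ and the type of $\beta/\div(\beta)$; it produces no relation between different divisibilities and hence cannot, by itself, generate the multiple-cover scaling. The conjecture therefore remains exactly as open after your argument as before it.
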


\subsection{Example}
We apply the conjectural multiple cover formula to the analogue of the Katz-Klemm-Vafa formula
for abelian surfaces which is the integral
\[ N^{\text{FLS}}_{g,\beta} = \int_{ [ \Mbar_{g,n}(A,\beta)^{\text{FLS}} ]^{\text{red}}} (-1)^{g-2} \lambda_{g-2} \]
where $\Mbar_{g,n}(A,\beta)^{\text{FLS}}$ is the substack of $\Mbar_{g,n}(A,\beta)$ that maps with image in a fixed linear system (FLS), see \cite{BOPY}.

To apply the multiple cover rule we specialize to $A = E \times E'$.
Consider symplectic bases
\[ \alpha_1, \beta_1 \in H^1(E,\BZ), \quad \alpha_2, \beta_2 \in H^1(E',\BZ) \]
which give a basis of $H^1(A,\BZ)$ (we omit the pullback), and let
\[ \omega_1 = \alpha_1 \beta_1,\quad \omega_2 = \alpha_2 \beta_2  \in H^2(A,\BZ). \]
We take $\beta = (d_1,d_2) := d_1 \omega_1 + d_2 \omega_2$.
For every $k | \mathrm{gcd}(d_1,d_2)$ define $\varphi_k \in \mathrm{SL}(H^1(A,\BQ))$ by
\[ \alpha_1 \mapsto \alpha_1,\quad \beta_1 \mapsto \frac{k}{d_1} \beta_1,\quad \alpha_2 \mapsto \alpha_2,\quad \beta_2 \mapsto \frac{d_1}{k} \beta_2. \]
The extension to the full cohomology satisfies
\[ \varphi_k( \beta/k ) = \omega_1 + \frac{d_1 d_2}{k^2} \omega_2 = (1, d_1 d_2/k^2). \]

Recall the result of Bryan (\cite[Sec.3.2]{BOPY}) that:
\[ N^{\text{FLS}}_{g,\beta} = \Big\langle (-1)^{g-2} \lambda_{g-2} ; \prod_{i=1}^{4} \xi_i \Big\rangle^A_{g,\beta} \]
where we can take
\[ (\xi_1,\xi_2,\xi_3,\xi_3)  = (\omega_1 \alpha_2, \omega_1 \beta_2, \alpha_1 \omega_2, \beta_1 \omega_2). \]
Conjecture~\ref{conjecture:abelian surfaces} then implies:
\begin{align*}
 N^{\text{FLS}}_{g,(d,d')}
& = \sum_{k|\beta} k^{2g-3+6} \blangle (-1)^{g-2} \lambda_{g-2} ; \prod_{i=1}^{4} \varphi_k(\xi_i) \brangle_{g, (1, dd'/k^2)} \\
%& = \sum_{k|\beta} k^{2g-3+6} \blangle (-1)^{g-2} \lambda_{g-2} ; \prod_{i=1}^{4} \xi_i \brangle_{g, (1, dd'/k^2)} \\
& = \sum_{k|\beta} k^{2g+3}  N^{\text{FLS}}_{g,(1,dd'/k^2)}
\end{align*} 
which matches precisely Conjecture A in \cite{BOPY}.

\section{Comparision with Gopakumar-Vafa invariants} \label{sec:comparision with GV}
For a K3 surface $S$ and effective curve class $\beta \in H_2(S,\BZ)$ consider the Gromov-Witten invariant
\[ R_{g,\beta} = \int_{ [ \Mbar_{g,n}(S,\beta) ]^{\text{red}} } (-1)^g \lambda_g. \]
Fix any primitive effective class $\alpha \in H_2(S,\BZ)$. Define the generating series
\[
F_{\alpha} = \sum_{g \geq 0} \sum_{m>0} R_{g, m \alpha} u^{2g-2} v^{m},
\]
where $u,v$ are formal variables. Following \cite{PT_KKV} the Gopakumar-Vafa invariants $r_{g,m \alpha} \in \BQ$
of the K3 surface $S$ % associated to the invariants $R_{g,\beta}$
are defined by the equality:
\begin{equation} \label{defn BPS}
F_{\alpha} = \sum_{g \geq 0} \sum_{m > 0} r_{g,m \alpha} \sum_{k>0} \frac{1}{k} \left( \frac{ \sin(ku/2) }{2} \right)^{2g-2} v^{km}.
\end{equation}

Recall from the introduction (Section~\ref{sec:intro K3}) the numbers:
\[ \widetilde{r}_{g,\beta} = \sum_{k | \beta} k^{2g-3} \mu(k) R_{g, \beta/k} \]

We have the following connection between the invariants $r_{g,\beta}$ and $\widetilde{r}_{g, \beta}$:
For any $g$ consider the expansion
\begin{equation} \label{agg def}
\left( \frac{1}{2} \sin(u/2) \right)^{2g-2} = \sum_{\tilde{g}} a_{g, \tilde{g}} u^{2 \tilde{g}-2}.
\end{equation}
\begin{lemma} \label{lemma:r rtilde} For any $g \geq 0$ we have the upper-triangular relation:
\begin{equation} \label{r rtilde rel}
\tilde{r}_{\tilde{g},\beta} = \sum_{\tilde{g}} a_{g, \tilde{g}} r_{g, \beta}.
\end{equation}
\end{lemma}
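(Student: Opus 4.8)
The plan is to read the relation directly off the two expressions for the generating series $F_\alpha$ and then feed the result into the definition of $\widetilde r$, with the Möbius identity $\sum_{k\mid \ell}\mu(k)=\delta_{\ell 1}$ (already used in Section~\ref{subsec:mc conjecture}) doing all the real work. Throughout I fix the primitive class $\alpha$ and write $\beta=m\alpha$, since all curve classes occurring are multiples of $\alpha$.

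First I would compare the two formulas for $F_\alpha$ coefficient-by-coefficient in $v$. On the one hand $F_\alpha=\sum_{g}\sum_{m>0}R_{g,m\alpha}\,u^{2g-2}v^m$, and on the other hand the defining equation \eqref{defn BPS} expands the coefficient of $v^m$ as a sum over $k\mid m$. Equating the two and extracting the coefficient of $v^m$ gives
\[ \sum_g R_{g,m\alpha}\,u^{2g-2}=\sum_{k\mid m}\frac1k\sum_g r_{g,(m/k)\alpha}\left(\tfrac12\sin(ku/2)\right)^{2g-2}. \]
Here I would use $\tfrac{\sin(ku/2)}{2}=\tfrac12\sin(ku/2)$ and then apply the definition \eqref{agg def} after the substitution $u\mapsto ku$, namely $\left(\tfrac12\sin(ku/2)\right)^{2g-2}=\sum_{\tilde g}a_{g,\tilde g}\,k^{2\tilde g-2}u^{2\tilde g-2}$. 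Reading off the coefficient of $u^{2\tilde g-2}$ yields the intermediate identity
\[ R_{\tilde g,m\alpha}=\sum_{k\mid m}k^{2\tilde g-3}\sum_g a_{g,\tilde g}\,r_{g,(m/k)\alpha}, \]
where the exponent $2\tilde g-3$ arises by combining the prefactor $1/k$ with the rescaling factor $k^{2\tilde g-2}$.

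Next I would substitute this into $\widetilde r_{\tilde g,m\alpha}=\sum_{k\mid m}k^{2\tilde g-3}\mu(k)\,R_{\tilde g,(m/k)\alpha}$. This produces a double divisor sum over $k\mid m$ and $j\mid(m/k)$ with summand $k^{2\tilde g-3}\mu(k)\,j^{2\tilde g-3}\sum_g a_{g,\tilde g}\,r_{g,(m/(kj))\alpha}$. Reindexing by $\ell=kj$ and using $k^{2\tilde g-3}j^{2\tilde g-3}=\ell^{2\tilde g-3}$, the sum over factorizations of a fixed $\ell$ collapses via $\sum_{k\mid \ell}\mu(k)=\delta_{\ell 1}$. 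Only $\ell=1$ survives, and since $1^{2\tilde g-3}=1$ this leaves $\widetilde r_{\tilde g,m\alpha}=\sum_g a_{g,\tilde g}\,r_{g,m\alpha}$, which is \eqref{r rtilde rel}.

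The computation is essentially bookkeeping, so the only genuine points are that the sums are finite and that the resulting system is triangular, as asserted. This follows from the leading-order behaviour of \eqref{agg def}: for $g\ge2$ one has $a_{g,\tilde g}=0$ unless $\tilde g\ge g$, while $a_{1,\tilde g}=\delta_{\tilde g,1}$ and $a_{0,\tilde g}$ is supported in $\tilde g\ge0$, with all diagonal entries $a_{\tilde g,\tilde g}$ nonzero. Hence for fixed $\tilde g$ only finitely many $g$ contribute, and ordering by $\tilde g$ one can solve \eqref{r rtilde rel} recursively to recover the $r_{g,\beta}$ from the $\widetilde r_{g,\beta}$. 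I do not anticipate any analytic difficulty; the main care needed is tracking the exponents of $k$ so that the Möbius cancellation comes out exactly.
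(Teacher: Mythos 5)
Your proof is correct and follows essentially the same route as the paper's: insert the rescaled expansion \eqref{agg def} (with $u \mapsto ku$) into the defining equation \eqref{defn BPS}, extract the coefficient of $u^{2\tilde g-2}v^m$ to obtain $R_{\tilde g, m\alpha} = \sum_{k \mid m} k^{2\tilde g-3} \sum_{g} a_{g,\tilde g}\, r_{g,(m/k)\alpha}$, and conclude by M\"obius inversion. The only cosmetic difference is that the paper names the right-hand side $\hat r$ and quotes M\"obius inversion directly, whereas you carry out the inversion by hand via the double divisor sum and the identity $\sum_{k \mid \ell}\mu(k) = \delta_{\ell 1}$; the content is identical.
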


In \cite{PT_KKV} it was shown that the $r_{g,\beta}$ do not depend on the divisibility of the curve class $\beta$.
By \eqref{r rtilde rel} we find that also $\tilde{r}_{g,\beta}$ does not depend on the divisibility,
as claimed in Theorem~\ref{thm:PT}.
Since \eqref{r rtilde rel} is upper-triangular we see that also the converse holds,
i.e. $r_{g,\beta}$ does not depend on the divisibility if and only if the same holds for $\tilde{r}_{g,\beta}$.
%we see that the 
%if and only if 
%as claimed in Theorem~\ref{thm:PT}. In fact, since the relation \eqref{r rtilde rel} is upper-triangular this implication is an equivalence.

\begin{proof}[Proof of Lemma~\ref{lemma:r rtilde}]
Let us define 
\[ \hat{r}_{\tilde{g},\beta} := \sum_{\tilde{g}} a_{g, \tilde{g}} r_{g, \beta}. \]
Inserting \eqref{agg def} into \eqref{defn BPS} we get:
\begin{align*}
F_{\alpha}
&= \sum_{g \geq 0} \sum_{m > 0} r_{g,m \alpha} \sum_{k>0} \frac{1}{k} \sum_{\tilde{g}} a_{g,\tilde{g}} k^{2 \tilde{g}-2} u^{2 \tilde{g}-2} v^{km} \\
&= \sum_{\tilde{g} \geq 0} \sum_{m > 0} \sum_{k>0} k^{2 \tilde{g}-3} \hat{r}_{g, m \alpha} u^{2 \tilde{g}-2} v^{km}.
\end{align*}
Taking the $v^n u^{2g-2}$ coefficient this shows that
\[
R_{g, n\alpha} = \sum_{k|n} k^{2 g-3} \hat{r}_{g,n/k \alpha}.
\]
By M\"obius inversion (i.e. using the identity $\sum_{d|n, d>0} \mu(d) = \delta_{n1}$), 
we get 
$\hat{r}_{g,n \alpha} = \sum_{k | n} k^{2g-3} \mu(k) R_{g, n \alpha/k}$, so $\widetilde{r}_{g, n\alpha} = \hat{r}_{g, n \alpha}$.
\end{proof}

By \cite{PT_KKV} all the $r_{g,\beta}$ are integers.
It would be interesting to find integer-invariants which
underlie the Gromov-Witten invariants of hyperk\"ahler varieties in dimension $>2$.
For hyperk\"ahler fourfolds, a partial proposal is discussed in \cite{COT}.

\section{Geometry of a general singular Debarre--Voisin fourfold\\
{\em by Jieao Song}}
\label{appendix:Song}

We give a description for the singularities of a general singular
Debarre--Voisin variety. In the notation of~\cite{BS}, the class of the
trivector $\sigma$ defining a general such Debarre--Voisin variety $X_6^\sigma$
lies in the divisor $\cD^{3,3,10}$: there exists a unique $3$-dimensional
subspace $V_3\subset V_{10}$ such that $\sigma$ satisfies the degeneracy
condition $\sigma(V_3,V_3,V_{10})=0$. Under the period map, this divisor
corresponds to the Heegner divisor\footnote{The divisor $\cD_{22}$ is denoted by $\CC_{22}$ in the main body of the text.} $\cD_{22}$ in the period domain. We obtained
the following description for the set-theoretical singular locus of
$X_6^\sigma$ in~\cite[Proposition~2.4]{BS}.

\begin{prop}
Let $[\sigma]\in \cD^{3,3,10}$ be general, so that there exists a unique
$3$-dimensional subspace $V_3\subset V_{10}$ with $\sigma(V_3,V_3,V_{10})=0$.
Set-theoretically, the singular locus of $X_6^\sigma$ is
\[
S\coloneqq\setmid{[V_6]\in X_6^\sigma}{V_6\supset V_3},
\]
which is a K3 surface of degree $22$.
\end{prop}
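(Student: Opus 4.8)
The plan is to realize the singular locus as the non-transversality locus of the section defining $X=X_6^\sigma$ and then read off both inclusions from an explicit contraction map. Write $s\in H^0(\Gr(6,10),\wedge^3\mathcal{U}^\vee)$ for the section ($\mathcal{U}$ the tautological subbundle). At a point $[V_6]\in X$, so $\sigma|_{\wedge^3 V_6}=0$, the tangent space is $\Hom(V_6,V_{10}/V_6)$ and the differential of $s$ is
\[ ds:\Hom(V_6,V_{10}/V_6)\to\wedge^3 V_6^\vee,\qquad ds(\phi)(v_1,v_2,v_3)=\sum_{\mathrm{cyc}}\sigma(\phi v_1,v_2,v_3), \]
which is well defined precisely because $\sigma|_{\wedge^3V_6}=0$. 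Then $[V_6]$ is singular on $X$ iff $ds$ is not surjective, i.e. iff the transpose
\[ C:=(ds)^\vee:\wedge^3 V_6\to V_6\otimes(V_{10}/V_6)^\vee,\qquad C(a\wedge b\wedge c)=a\otimes\bar\sigma(b\wedge c)+b\otimes\bar\sigma(c\wedge a)+c\otimes\bar\sigma(a\wedge b), \]
has nonzero kernel, where $\bar\sigma:\wedge^2 V_6\to(V_{10}/V_6)^\vee$ is the contraction of $\sigma$ in its first two slots (this descends to $V_{10}/V_6$ because $\sigma|_{V_6}=0$).

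The inclusion $S\subseteq\mathrm{Sing}(X)$ is immediate from this. If $V_3\subset V_6$, the generator $\eta_0$ of the line $\wedge^3 V_3\subset\wedge^3 V_6$ lies in $\ker C$: every term of $C(\eta_0)$ involves $\bar\sigma(u\wedge u')$ with $u,u'\in V_3$, and $\bar\sigma|_{\wedge^2 V_3}=0$ since $\sigma(V_3,V_3,V_{10})=0$. Hence $C$ is not injective and $[V_6]$ is singular.

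The reverse inclusion $\mathrm{Sing}(X)\subseteq S$ is the crux. Suppose $C(\eta)=0$ with $0\neq\eta\in\wedge^3 V_6$. If $\eta=a\wedge b\wedge c$ is decomposable, linear independence of $a,b,c$ forces $\bar\sigma(b\wedge c)=\bar\sigma(c\wedge a)=\bar\sigma(a\wedge b)=0$; writing $P=\langle a,b,c\rangle\subset V_6$ this reads $\sigma(P,P,V_{10})=0$, and since $[\sigma]$ is a general point of $\mathcal{D}^{3,3,10}$ the three-space with this degeneracy is unique and equals $V_3$, whence $V_3=P\subset V_6$ and $[V_6]\in S$. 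The genuine difficulty is that $\ker C$ could a priori contain an $\eta$ of rank $>3$. I would dispose of this by a genericity argument: choosing a basis adapted to $V_3$ one has $\sigma=\sigma_0+\sigma_1$ with $\sigma_0\in\wedge^3(V_{10}/V_3)^\vee$ and $\sigma_1\in V_3^\vee\otimes\wedge^2(V_{10}/V_3)^\vee$, and I would show by a direct computation that for general such $\sigma$ and every $[V_6]$ with $V_3\not\subset V_6$ the map $C$ is injective. As injectivity is an open condition on $(\sigma,V_6)$, it suffices to exhibit one such pair, and this explicit injectivity verification is the main obstacle.

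Finally I would identify $S$ and compute its invariants. The flag locus $\{[V_6]:V_3\subset V_6\}$ is $\Gr(3,V_{10}/V_3)\cong\Gr(3,7)$ via $V_6\mapsto V_6/V_3$; writing $\mathcal{S}$ for its tautological subbundle, the decomposition $\sigma=\sigma_0+\sigma_1$ shows $\sigma|_{\wedge^3 V_6}=0$ is equivalent to the vanishing of a section of the globally generated rank-$10$ bundle
\[ E=\wedge^3\mathcal{S}^\vee\ \oplus\ \big(V_3^\vee\otimes\wedge^2\mathcal{S}^\vee\big). \]
For general $\sigma$ the induced section is general, so by Bertini $S=Z(s_E)$ is smooth of dimension $12-10=2$. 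Using $\wedge^3\mathcal{S}^\vee=\mathcal{O}(1)$ and $\wedge^2\mathcal{S}^\vee\cong\mathcal{S}\otimes\mathcal{O}(1)$ one gets $\det E=\mathcal{O}(7)$, so adjunction yields $K_S=(K_{\Gr(3,7)}+\det E)|_S=(\mathcal{O}(-7)\otimes\mathcal{O}(7))|_S=\mathcal{O}_S$; together with $H^1(\mathcal{O}_S)=0$, obtained from the Koszul resolution of $\mathcal{O}_S$ and Bott vanishing on $\Gr(3,7)$, this shows $S$ is a K3 surface. Since the Plücker polarization of $\Gr(6,10)$ restricts on $S$ to $\mathcal{O}_{\Gr(3,7)}(1)$ (because $\det V_6^\vee\cong\det V_3^\vee\otimes\det(V_6/V_3)^\vee$ with $V_3$ fixed), its degree is $\int_{\Gr(3,7)}c_1(\mathcal{O}(1))^2\,c_{10}(E)$, a Schubert-calculus computation giving $22$.
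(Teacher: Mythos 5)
First, a point of context: the paper itself contains no proof of this proposition --- it is quoted from \cite[Proposition~2.4]{BS}, and Appendix~\ref{appendix:Song} only proves the subsequent, finer statement about ordinary double points. So your proposal has to stand entirely on its own. Much of it does: the identification of $\mathrm{Sing}(X_6^\sigma)$ with the non-injectivity locus of $C=(ds)^\vee$ is correct (granting pure-dimensionality of $X_6^\sigma$, which follows a posteriori once that locus is shown to be $2$-dimensional); the inclusion $S\subseteq\mathrm{Sing}(X_6^\sigma)$ via $\wedge^3V_3\subseteq\ker C$ is complete; the treatment of a \emph{decomposable} kernel element is correct; and the final paragraph identifying $S$ with the zero locus of a general section of $\wedge^3\cS^\vee\oplus(V_3^\vee\otimes\wedge^2\cS^\vee)$ on $\Gr(3,V_{10}/V_3)$ is sound --- indeed $\det E=\cO(7)=-K_{\Gr(3,7)}$, and the degree is $\int_{\Gr(3,7)}\sigma_1^3\,(\sigma_1\sigma_{1,1}-\sigma_{1,1,1})^3=\int_{\Gr(3,7)}\sigma_1^3\,\sigma_{2,1}^3=22$, modulo the routine Bott vanishing giving $q(S)=0$.

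The genuine gap is the reverse inclusion $\mathrm{Sing}(X_6^\sigma)\subseteq S$, and it is a logical gap, not merely a deferred computation. What you need is: for general $[\sigma]\in\cD^{3,3,10}$, the map $C_{\sigma,V_6}$ is injective for \emph{every} $[V_6]\in X_6^\sigma$ with $V_3\not\subset V_6$. Your reduction --- injectivity is open on pairs $(\sigma,V_6)$, hence one example suffices --- proves only that injectivity holds on a dense open subset of the incidence variety $I$ of such pairs. That is strictly weaker. The bad locus $Z\subset I$ is closed in $I$, but the projection $I\to\cD^{3,3,10}$ has $4$-dimensional fibers (essentially $X_6^\sigma\setminus S$), so $Z$ can dominate $\cD^{3,3,10}$ as soon as its codimension in $I$ is at most $4$; in that case \emph{every} $X_6^\sigma$ would have singular points off $S$, and your single example would not detect it. "For general $\sigma$, for all $V_6$" simply does not follow from "for general $(\sigma,V_6)$". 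To close the argument one must bound $\dim Z$: for each relative position $\dim(V_3\cap V_6)\in\{0,1,2\}$, show that inside the linear space of trivectors satisfying $\sigma(V_3,V_3,V_{10})=0$ and $\sigma|_{\wedge^3 V_6}=0$, the locus where the $24\times 20$ matrix $C_{\sigma,V_6}$ drops rank has codimension $5$ (the expected value $24-20+1$, which beats the $4$-dimensional family of $V_6$'s), and then conclude $\dim Z<\dim\cD^{3,3,10}$ by an incidence count over the flag of pairs. Establishing this genericity of the linear system $\{C_{\sigma,V_6}\}_\sigma$ --- separately in the degenerate positions --- is the actual content of the proposition, and it is exactly what your proposal leaves open; indeed, even the single explicit pair that your (insufficient) reduction calls for is never exhibited.
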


We prove the following stronger result, following the idea in
\cite[Lemma~6.3.1]{Hassett}, where a similar result is proved for the variety
of lines of a nodal cubic hypersurface. We shall see that the two cases share
some surprising similarities.

\begin{prop}
Let $\sigma$ be as in the previous proposition. For the associated
Debarre--Voisin variety $X_6^\sigma$, the singularities along the degree-$22$
K3 surface~$S$ are codimension-$2$ ordinary double points.
More precisely, by blowing up the singular locus~$S$, we get a smooth
hyperk\"ahler fourfold of $\KKK^{[2]}$-type, and the exceptional divisor is a
conic fibration over~$S$.
\end{prop}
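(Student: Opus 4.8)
The plan is to follow the strategy of \cite[Lemma~6.3.1]{Hassett} and reduce everything to an explicit local computation in a Pl\"ucker chart. First I would fix a point $p=[V_6]\in S$, so that $V_3\subset V_6\subset V_{10}$, and choose a basis of $V_{10}$ adapted to this flag. The standard affine chart of $\Gr(6,10)$ around $p$ is modeled on $\Hom(V_6,V_{10}/V_6)$, and the Debarre--Voisin variety $X_6^\sigma$ is cut out in this $24$-dimensional chart by the $20$ coordinates of the section $\sigma|_{\wedge^3 V_6}$, viewed as functions of $\phi\in\Hom(V_6,V_{10}/V_6)$. The degeneracy condition $\sigma(V_3,V_3,V_{10})=0$ should be used to normalize $\sigma$: it forces all components of $\sigma$ pairing two vectors of $V_3$ with an arbitrary third vector to vanish, which is exactly what makes $p$ a singular point and produces the surface $S$ of solutions $V_6\supset V_3$.

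The heart of the argument is the next step. I would compute the differential of the $20$ equations at $p$ and show that it has corank exactly $1$, so that $T_pX$ is $5$-dimensional and $X_6^\sigma$ is, analytically near $p$, a hypersurface germ $\{f=0\}$ inside the smooth $5$-fold $W$ cut out by the $19$ independent linear equations. I would then compute the Hessian of $f$ on $W$ and show it is a quadratic form of rank exactly $3$ whose $2$-dimensional kernel is tangent to $S$. This exhibits the transverse singularity as the surface node $\{x_1^2+x_2^2+x_3^2=0\}\subset\BC^3$, i.e.\ an $A_1$ singularity, so that $X_6^\sigma$ has codimension-$2$ ordinary double points along $S$. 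Carrying this out -- writing $\sigma$ explicitly in coordinates dictated by $\cD^{3,3,10}$ and verifying that the transverse Hessian is nondegenerate of rank $3$ at \emph{every} point of $S$, not merely generically -- is the main obstacle, exactly as in Hassett's variety-of-lines computation. Constancy of the rank along $S$ is what yields the analytically local product structure $X_6^\sigma\cong S\times\{x_1^2+x_2^2+x_3^2=0\}$.

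Granting the local model, the blow-up statement is immediate: blowing up $X_6^\sigma$ along $S$ is analytically locally the blow-up of $S\times\BC^3$ along $S\times\{0\}$ restricted to the strict transform, i.e.\ $S\times\mathrm{Bl}_0 Y$ with $Y=\{x_1^2+x_2^2+x_3^2=0\}$. Since the cone $Y$ is resolved by a single blow-up of its vertex, $\mathrm{Bl}_S X_6^\sigma$ is smooth, and its exceptional divisor meets each transverse $\p^2$ in the smooth conic $\{x_1^2+x_2^2+x_3^2=0\}$; globally this is a conic fibration over $S$.

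Finally, to identify the deformation type I would argue by simultaneous resolution. The blow-up is the relative minimal resolution of a transverse family of $A_1$ singularities, hence crepant, so $\widetilde X=\mathrm{Bl}_S X_6^\sigma$ has trivial canonical class and the holomorphic-symplectic form extends across the exceptional divisor. Taking a one-parameter family $\sigma_t$ with $\sigma_0=\sigma$ and $\sigma_t\notin\cD^{3,3,10}$ for $t\ne 0$, the fibers $X_6^{\sigma_t}$ are smooth Debarre--Voisin fourfolds, which are of $\KKK^{[2]}$-type by \cite{DV}. Because the central fiber acquires only codimension-$2$ ordinary double points, a Brieskorn-type simultaneous resolution after the base change $t=s^2$ produces a family with smooth total space whose central fiber is $\widetilde X$; thus $\widetilde X$ is deformation equivalent (indeed diffeomorphic) to a smooth Debarre--Voisin fourfold and is therefore of $\KKK^{[2]}$-type. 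The one point requiring care here is that the simultaneous resolution of the family matches the fiberwise blow-up $\widetilde X$, which again follows from the equisingular product structure along $S$ established above.
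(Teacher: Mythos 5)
Your proposal is a plan whose decisive step is missing, not a proof. The entire content of the first claim is that the transverse quadratic form has rank exactly $3$ at \emph{every} point of $S$, and this is exactly what you defer as ``the main obstacle''. Worse, it cannot come out of the only input you feed into your chart computation: the degeneracy condition $\sigma(V_3,V_3,V_{10})=0$ explains why $X_6^\sigma$ is singular along $S$, but says nothing about the singularities being \emph{ordinary}; some source of nondegeneracy must enter, and you never identify one. In the paper's proof (Song's appendix) that source is the fact, taken from \cite{BS}, that for general $[\sigma]\in\cD^{3,3,10}$ the Pl\"ucker hyperplane section $X_3^\sigma\subset\Gr(3,V_{10})$ has an \emph{ordinary} double point at $[V_3]$, so its tangent cone is a smooth quadric $Q\subset\bP(T_{21})=\bP^{20}$. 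The paper then avoids any Hessian computation by the device of \cite[Lemma~6.3.1]{Hassett} (which you cite but do not actually use): blow up the Schubert variety $\Sigma=\{[V_6]:V_6\supset V_3\}\simeq\Gr(3,V_{10}/V_3)$ in the \emph{ambient} Grassmannian; the Cartesian diagram embeds the projectivized normal cone $\bP C_{S,x}X_6^\sigma$ into $\bP\cN_{\Sigma/\Gr(6,V_{10}),x}\simeq\bP(T_{21}/T_9)$ and identifies it with the variety of $\bP^9$'s contained in $Q$ and containing the fixed $\bP^8=\bP(T_9)$, i.e.\ maximal isotropic subspaces of a $21$-dimensional quadratic space containing a fixed $9$-dimensional isotropic subspace. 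This is a smooth conic, uniformly in $x\in S$, which yields at once the codimension-$2$ ODP statement, the smoothness of $\Bl_S X_6^\sigma$, and the conic-fibration structure of the exceptional divisor. Any honest version of your rank-$3$ claim would have to be derived from the smoothness of $Q$ and the position of $\bP(T_9)\subset Q$; as written, your proposal supplies no argument at all at this point.

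Your last step (the $\KKK^{[2]}$-type identification) is also incomplete, and here your route genuinely diverges from the paper's. Brieskorn simultaneous resolution is a statement about families of \emph{surfaces} with rational double points; in your setting you need, after the base change $t=s^2$, a \emph{global} small resolution of the $5$-dimensional total space (transversally $xy=(z-s)(z+s)$ along $S$) whose central fiber agrees with the fiberwise blow-up $\Bl_S X_6^\sigma$. Existence requires a global choice of one of the two rulings of the transverse quadric over $S$ together with the equisingular product structure you have not established, and the agreement of the central fiber with the blow-up is an extra check; one must also know the central fiber is K\"ahler before concluding it is hyperk\"ahler (here fine, since the blow-up is projective). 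These points can likely be repaired ($S$ is simply connected, and this is the small-resolution construction of \cite[Sec.~5.1]{MP} used in Section~6 of the main text), but the paper's proof bypasses all of it: it shows $K_{\tilde X_6^\sigma}=mD$ must have $m=0$ by adjunction along the exceptional conic fibration --- $\omega_{D/S}\simeq\cO_E(E)|_D$ is non-trivial by Leray--Hirsch and $S$ is a K3, so $K_D=D|_D\neq 0$, while adjunction gives $K_D=(m+1)D|_D$ --- and then uses the birationality of $X_6^\sigma$ with $S^{[2]}$ to force $\tilde X_6^\sigma$ to be hyperk\"ahler of $\KKK^{[2]}$-type. In short: both halves of your argument stop precisely where the real work begins.
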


\begin{proof}
We briefly recall the argument for the nodal cubic: for a cubic
$X\subset\bP^5=\bP(V_6)$ containing a node $p\coloneqq[V_1]$, the projectivized
normal cone $\bP C_p X$ is a quadric hypersurface $Q$ in $\bP T_p
\bP^5=\bP(V_6/V_1)$, and the varieties of lines $F\subset \Gr(2,V_6)$ is
singular along a K3 surface $S$ parametrizing lines in $X$ passing through $p$.
Instead of blowing up $S$ in $F$, Hassett considered studying the ambient
Grassmannian $\Gr(2,V_6)$ and blowing up the Schubert variety
$\Sigma\coloneqq\bP(V_6/V_1)\subset \Gr(2,V_6)$, which parametrizes all lines
in $\bP(V_6)$ passing through $p$. This gives the following Cartesian diagram
\[
\begin{tikzcd}
\tilde F\coloneqq\Bl_S F \ar[r, hook]\ar[d]& \Bl_{\Sigma}\Gr(2,V_{6})\ar[d]\\
F \ar[r,hook] & \Gr(2,V_{6}).
\end{tikzcd}
\]
For a given point $x\coloneqq[V_2]\in S$, we get one distinguished point
$y\coloneqq[V_2/V_1]$ in $\bP^4=\bP(V_6/V_1)$ that lies on the quadric $Q$. The
projectivized normal space $\bP\cN_{\Sigma/\Gr(2,V_6),x}$ can be identified
with $\bP(V_6/V_2)$, which parametrizes lines in $\bP^4=\bP(V_6/V_1)$ passing
through the point $y$, and the projectivized normal cone $\bP C_{S,x}F$ is
given by the subscheme parametrizing such lines that are also entirely
contained in the quadric threefold $Q$, in other words, lines in $Q$ passing
through a given point. This condition gives a smooth conic curve, so the
singularities of $F$ along $S$ are indeed codimension-2 ordinary double points.

%\medskip

We use a similar argument to study the singular Debarre--Voisin variety
$X_6^\sigma$. By assumption, the hyperplane section $X_3^\sigma$ admits an
ordinary double point at $[V_3]$, so its tangent cone at $[V_3]$ is a smooth
quadric hypersurface~$Q$ in the projectivization of the tangent space
\[\bP T_{[V_3]}\Gr(3,V_{10})\simeq\bP\Hom(V_3,V_{10}/V_3)=:
\bP(T_{21})=\bP^{20}.\]
For a given $x\coloneqq[V_6]\in S$, the projective space
$\bP\Hom(V_3,V_6/V_3)=:\bP(T_9)=\bP^8$ gives a distinguished linear subspace
contained in~$Q$.

Following the proof of Hassett, instead of blowing up $S$ in $X_6^\sigma$, we
consider the ambient Grassmannian $\Gr(6,V_{10})$ and blow up the entire
Schubert variety
\[
\Sigma\coloneqq\setmid{[V_6]\in\Gr(6,V_{10})}{V_6\supset V_3}
\simeq\Gr(3,V_{10}/V_3),
\]
which is smooth of codimension~12.
We have the following description for its normal bundle in $\Gr(6,V_{10})$:
\[
\cN_{\Sigma/\Gr(6,V_{10})}=\Hom(\cU_6,\cQ_{10/6})/\Hom(\cU_6/V_3,
\cQ_{10/6})\simeq \Hom(V_3,\cQ_{10/6}),
\]
where we denote by $\cU_6$ and $\cQ_{10/6}$ the restrictions to $\Sigma$ of the
two tautological bundles on $\Gr(6,V_{10})$.
For the given point $x\in S$, the projectivization of the normal space is
therefore an 11-dimensional projective space
\[
\bP\cN_{\Sigma/\Gr(6,V_{10}),x}\simeq \bP\Hom(V_3,V_{10}/V_6)\simeq
\bP(T_{21}/T_9),
\]
where we recall that $T_{21}$ is the tangent space of $\Gr(3,V_{10})$ at
$[V_3]$, and $T_9$ is the tangent space of $\Gr(3,V_6)$ at~$[V_3]$, viewed as a
subspace of $T_{21}$.

Consider the proper transform of $X_6^\sigma$ denoted by $\tilde
X_6^\sigma$. We have the following Cartesian diagram
\[
\begin{tikzcd}
\tilde X_6^\sigma \ar[r, hook]\ar[d]& \Bl_{\Sigma}\Gr(6,V_{10})\ar[d]\\
X_6^\sigma \ar[r,hook] & \Gr(6,V_{10}).
\end{tikzcd}
\]
Consequently we get a natural closed embedding of the projectivized normal cone
\[
\bP C_{S,x}X_6^\sigma\into \bP\cN_{\Sigma/\Gr(6,V_{10}), x}\simeq\bP(T_{21}/T_9).
\]
The total projective space $\bP(T_{21}/T_9)$ parametrizes 9-dimensional linear
subspaces of $\bP(T_{21})$ that contains the distinguished $\bP^8=\bP(T_9)$,
and the projectivized normal cone $\bP C_{S,x} X_6^\sigma$ can then be
identified with the subscheme that parametrizes such $\bP^9$ that are also
contained in the quadric $Q$. In other words, it parametrizes 9-dimensional
linear subspaces in a 19-dimensional quadric containing a fixed $\bP^8$. This
is again a smooth conic curve, just like in the nodal cubic case. Thus the
singularities of $X_6^\sigma$ along $S$ are indeed codimension-2 ordinary
double points, and $\tilde X_6^\sigma$ is smooth.

%\medskip

Finally, we show that the resolution $\tilde X_6^\sigma$ that we obtained has
trivial canonical class. Since $X_6^\sigma$ is birational to the Hilbert square
$S^{[2]}$, this will then force $\tilde X_6^\sigma$ to be a smooth hyperk\"ahler
fourfold of $\KKK^{[2]}$-type.

We denote by $E$ the exceptional divisor for the
blowup $\Bl_\Sigma\Gr(6,V_{10})\to \Gr(6,V_{10})$, and by $D$ the exceptional
divisor for the blowup $\tilde X_6^\sigma\to X_6^\sigma$.
The divisor $D$ can be identified with the projectivized normal cone $\bP C_S
X_6^\sigma$, so the morphism $D\to S$ is a conic fibration by the above
analysis. By construction, the Zariski open subset $\tilde X_6^\sigma\setminus
D$ is isomorphic to the smooth locus $X_6^\sigma\setminus S$. The latter has
trivial canonical class since it is the regular zero-locus of $\sigma$ viewed
as a section of the vector bundle $\bw3\cU_6^\vee$. Therefore, the canonical
divisor $K_{\tilde X_6^\sigma}$ is linearly equivalent to some multiple of $D$.
We write $K_{\tilde X_6^\sigma}=m D$, and it remains to show that $m=0$.

Since $D\to S$ is a smooth conic fibration in the projectivized normal bundle
$E\to \Sigma$, the relative $\cO(-1)$ of $E\to \Sigma$ restricts to the
relative canonical bundle of $D\to S$. Note that by the Leray--Hirsch theorem,
this bundle is necessarily non-trivial. Since $E$ is the exceptional divisor,
the relative $\cO(-1)$ on $E$ is given by $\cO_E(E)$, so we have
\[
\omega_{D/S}\simeq\cO_E(E)|_D.
\]
Using the fact that $S$ is a K3 surface and that $\cO_E(E)|_{\tilde
X_6^\sigma}\simeq \cO_{\tilde X_6^\sigma}(D)$, this gives
\[
\omega_D\simeq\omega_{D/S}\simeq\cO_{\tilde X_6^\sigma}(D)|_D,\quad\text{hence}
\quad K_D=D|_D,
\]
which in particular must be non-trivial.

On the other hand, by the adjunction formula we have
\[
K_D\simeq(K_{\tilde X_6^\sigma}+D)|_D=(m+1)D|_D.
\]
Thus we may conclude that $m=0$, and $K_{\tilde X_6^\sigma}$ is indeed trivial.
\end{proof}

\begin{rmk}
Contrary to the nodal cubic case, the resolution $\tilde X_6^\sigma$ obtained
is not isomorphic to the Hilbert scheme $S^{[2]}$, even for a generic member of
the family. This can be seen by studying the chamber decomposition for a
generic $S^{[2]}$ with Picard rank 2: one may find exactly two chambers in the
movable cone, corresponding to $S^{[2]}$ and a second birational model; the
Pl\"ucker polarization pulled back to $S^{[2]}$ via the birational map is equal
to $10H-33\delta$ and not nef (see for example \cite[Table~1]{DHOV}), so we may
conclude that $\tilde X_6^\sigma$ is the second birational model. The two
models are related by a Mukai flop, and it would be interesting to see this
geometrically.
\end{rmk}

%Conflicts of interest: none.

\end{document}